\documentclass[11pt]{amsart}

\usepackage{amsmath,amsthm,amsfonts,amssymb,mathrsfs,bbm}
\usepackage{mathtools}
\usepackage{enumitem}
\usepackage{float}
\usepackage{braket}
\usepackage[margin=1.4in]{geometry}
\usepackage[textwidth=1.3in,textsize=tiny]{todonotes}

\usepackage{tikz}
\usetikzlibrary{arrows,matrix, cd}
\usepackage{tikz-cd}

\def\inn#1#2{\langle #1, #2 \rangle}

\def\cksigma{\check{\sigma}}

\newcommand{\Orb}{\mathbb O}

\newcommand{\C}{\mathbb{C}}
\newcommand{\R}{\mathbb{R}}
\newcommand{\Z}{\mathbb{Z}}

\DeclareMathOperator{\Or}{O}

\def\Pr{\mathrm{Pr}}

\newcommand{\ckfg}{{\check{\fg}}}
\newcommand{\ckfm}{{\check{\fm}}}
\newcommand{\ckfp}{{\check{\fp}}}
\newcommand{\ckfu}{{\check{\fu}}}
\newcommand{\ckfh}{{\check{\fh}}}
\newcommand{\eqL}{\approx_{L}}
\newcommand{\eqR}{\approx_{R}}
\newcommand{\eqLR}{\approx_{LR}}
\newcommand{\leqL}{\leq_{L}}
\newcommand{\leqR}{\leq_{R}}
\newcommand{\leqLR}{\leq_{LR}}
\newcommand{\bfzero}{\mathbf{0}}
\newcommand{\bfone}{\mathbf{1}}

\newcommand{\bfd}{\mathbf{d}}
\newcommand{\bfp}{\mathbf{p}}
\newcommand{\bfq}{\mathbf{q}}
\newcommand{\bfr}{\mathbf{r}}
\newcommand{\bfs}{\mathbf{s}}

\newcommand{\half}{\tfrac{1}{2}}

\newcommand{\fb}{\mathfrak{b}}

\newcommand{\fh}{\mathfrak{h}}
\newcommand{\fu}{\mathfrak{u}}

\newcommand{\fg}{\mathfrak{g}}

\newcommand{\fl}{\mathfrak{l}}
\newcommand{\fm}{\mathfrak{m}}

\newcommand{\fp}{\mathfrak{p}}

\newcommand{\cK}{\mathcal{K}}

\newcommand{\cN}{\mathcal{N}}

\newcommand{\cV}{\mathcal{V}}

\newcommand{\cZ}{\mathcal{Z}}

\newcommand{\cU}{\mathcal{U}}

\newcommand{\ckalpha}{\check{\alpha}}

 \DeclareMathOperator{\ad}{ad}
 
\DeclareMathOperator{\Irr}{Irr}

\DeclareMathOperator{\Ind}{Ind}
\DeclareMathOperator{\Id}{Id}
\DeclareMathOperator{\wt}{wt}

\newcommand{\nilcone}{{\mathcal N}}

\newcommand{\parti}{\mathcal P}
\newcommand{\g}{{\mathfrak g}}
\newcommand{\Ug}{\cU\fg}
\newcommand{\Zg}{\cZ\fg}

\DeclareMathOperator{\GL}{GL}

\DeclareMathOperator{\SL}{SL}
\DeclareMathOperator{\Sp}{Sp}
\DeclareMathOperator{\SO}{SO}
\DeclareMathOperator{\Spin}{Spin}
\DeclareMathOperator{\so}{\mathfrak{so}}
\DeclareMathOperator{\gl}{\mathfrak{gl}}
\DeclareMathOperator{\sgn}{sgn}
\DeclareMathOperator{\sg}{sg}
\DeclareMathOperator{\spr}{Spr}

\DeclareMathOperator{\Coh}{Coh}
\DeclareMathOperator{\Ann}{Ann}

\DeclareMathOperator{\spn}{Span}

 \newcommand{\renc}{\renewcommand}
\renc{\sl}{{\mathfrak{sl}}}
\renc{\sp}{{\mathfrak{sp}}}   
%\renc{\O}{{\mathrm{O}}}
%\ren\so}{{\mathfrak{so}}}   

\newcommand{\trivial}[2][]{\if\relax\detokenize{#1}\relax{\color{orange} \vspace{0em} #2} \else\ifx#1h\ifcsname showtrivial\endcsname {\color{orange} \vspace{0em} #2} \fi\else{\color{red} Wrong argument!}\fi\relax\fi}

\RequirePackage{ae, aecompl, aeguill} % to have pretty pdf
\RequirePackage{color}
\definecolor{myred}{rgb}{0.75,0,0}
\definecolor{mygreen}{rgb}{0,0.5,0}
\definecolor{myblue}{rgb}{0,0,0.65}

\usepackage{hyperref}

\usepackage[capitalise,noabbrev,nameinlink]{cleveref}
\usepackage{comment}

\newtheorem{theorem}{Theorem}[section]
\newtheorem{lemma}[theorem]{Lemma}
\newtheorem{proposition}[theorem]{Proposition}
\newtheorem{corollary}[theorem]{Corollary}

\newtheorem{example}[theorem]{Example}

\theoremstyle{definition}
\newtheorem{definition}[theorem]{Definition}

\theoremstyle{remark}
\newtheorem{remark}[theorem]{Remark}

\def\Jmax{J_{\mathrm{max}}}

\def\Rep{\mathrm{Rep}}

\def\leqLR{\leq_{LR}}

\begin{document}

\title{Weak unipotence and Langlands duality}

\author{Jia-jun Ma}
\address{J. M.: School of Mathematical Sciences, Xiamen University, Xiamen, Fujian, China}
\email{hoxide@gmail.com}
\author{Shilin Yu}
\address{S.Y.: School of Mathematical Sciences, East China Normal University, Shanghai, China}
\address{S.Y.: School of Mathematical Sciences, Xiamen University, Xiamen, Fujian, China}
\email{turingfish@gmail.com}

\date{\today}

\begin{abstract}
	Weak unipotence of primitive ideals is a crucial property in the study of unitary representations of reductive groups. We establish a sufficient condition, referred to as \emph{mild unipotence}, which guarantees weak unipotence and is more accessible in practice. We establish mild unipotence for both the $q$-unipotent ideals defined by McGovern \cite{McGovern1994} and unipotent ideals attached to nilpotent orbit covers defined by Losev-Mason-Brown-Matvieievskyi \cite{LMBM}. Our proof is conceptual and uses the bijection between special orbits in type $D$ and metaplectic special orbits in type $C$ found in \cite{BMSZ:metaplecticBV} in an essential way.
\end{abstract}

\maketitle
\tableofcontents

\section{Introduction}

In \cite{Vogan:unitarizability}, Vogan defined the notion of weak unipotence of primitive ideals of the universal enveloping algebra of a reductive Lie algebra $\fg$, see \Cref{defn:weakly_unipotent}. This notion plays a crucial role in the study of unitary representations of reductive groups (see \cite{Vogan:unitarizability} and \cite{Davis-Mason-Brown:Hodge}). 
In \cite[Proposition~5.10]{BV85:unipotent}, Barbasch and Vogan proved weak unipotence of special unipotent ideals attached to even orbits in the Langlands dual Lie algebra $\ckfg$ of $\fg$. It is natural to ask if there are more weakly unipotent primitive ideals that arise naturally from nilpotent orbits.
In \cite{McGovern1994}, McGovern introduced the notion of $q$-unipotent infinitesimal characters and $q$-unipotent ideals for classical Lie algebras. In \cite{LMBM}, Losev, Mason-Brown, and Matvieievskyi attached unipotent ideals to nilpotent orbit covers. For linear classical groups of type $B$, $C$ and $D$, their unipotent ideals are special cases of $q$-unipotent ideals, while for type $A$, spin and exceptional groups, the unipotent ideals in \cite{LMBM} provide new examples. In all these cases, the primitive ideals are maximal primitive ideals with certain infinitesimal characters. Therefore we will also speak of weakly unipotence of an infinitesimal character, which just means the weakly unipotent of the maximal primitive ideal with this infinitesimal character.

The main result of this paper is the following theorem.

\begin{theorem}
	Let $\g$ be a complex semisimple Lie algebra.
	\begin{enumerate}[label=(\roman*), nosep]
		\item if $\g$ is of classical type, then q-unipotent infinitesimal characters (cf. \Cref{defn:q-unipotent}) and unipotent ideals attached to covers of nilpotent orbits in $\g^*$ are weakly unipotent.
		\item if $\g$ is of exceptional type, then all unipotent ideals attached to birational rigid covers of nilpotent orbits in $\g^*$ are weakly unipotent.
	\end{enumerate}
\end{theorem}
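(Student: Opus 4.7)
The plan is to introduce an auxiliary notion, \emph{mild unipotence}, a property of primitive ideals that is considerably more combinatorial and accessible than weak unipotence, and then to prove two separate statements: (a) mild unipotence implies weak unipotence for any primitive ideal of maximal type in its infinitesimal character class, and (b) every $q$-unipotent ideal (in the classical case) and every unipotent ideal in the sense of \cite{LMBM} (in all cases covered by the theorem) is mildly unipotent. Implication (a) should follow from Vogan's definition by comparing infinitesimal characters under the standard partial orders on $\fh^*/W$ and translating Vogan's condition into an inequality that mild unipotence is designed to make manifest; the translation principle will reduce this to an assertion purely about the infinitesimal character.

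For part (i) — the classical case — the starting point is that for linear classical groups the LMBM unipotent ideals are already a special case of McGovern's $q$-unipotent ideals, so the statement reduces to verifying mild unipotence for $q$-unipotent infinitesimal characters in types $B$, $C$, and $D$. Because the defining data of a $q$-unipotent character is read off from the partition of the nilpotent orbit on the dual side, the verification is combinatorial in nature and proceeds partition-by-partition. The decisive conceptual input is the bijection from \cite{BMSZ:metaplecticBV} between special orbits in type $D$ and metaplectic special orbits in type $C$: under this bijection, the data controlling mild unipotence on the type $D$ side translate into the corresponding data on the metaplectic type $C$ side, allowing a uniform argument to handle types $B/C$ and type $D$ simultaneously. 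In particular, I would leverage this bijection to avoid the case-by-case asymmetries that normally plague partition-theoretic proofs distinguishing types $B$, $C$, and $D$. The spin cover cases require an additional twist by the half-sum of positive roots, which must be tracked through the bijection.

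For part (ii) — the exceptional case — I would use the fact that birationally rigid covers of nilpotent orbits in exceptional Lie algebras form a finite and explicitly tabulated list, so the assertion is in principle verifiable case by case. Rather than grind through the list, however, the plan is to isolate the general mechanism: the LMBM unipotent infinitesimal character is characterized by a smallness property relative to its associated orbit cover, and this smallness property can be seen to coincide with the defining condition of mild unipotence after applying Barbasch--Vogan duality together with Sommers--Achar duality for orbit covers. This reduces the exceptional case to a short residual list, to be checked against the tables of primitive ideals in the literature.

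The main obstacle is implication (a). Vogan's definition of weak unipotence controls the infinitesimal characters of \emph{all} primitive ideals strictly containing the given one (with the same associated variety), not just those attached to the same orbit, so converting mild unipotence — an intrinsic inequality on a single infinitesimal character — into weak unipotence requires a uniform comparison between the order $\leqLR$ on primitive ideals and the natural order on infinitesimal characters. A secondary difficulty is verifying that the type $D$/metaplectic $C$ bijection of \cite{BMSZ:metaplecticBV} intertwines the duality maps and order structures needed by the argument; I anticipate this compatibility will demand a careful diagram chase through Barbasch--Vogan duality, Lusztig--Spaltenstein induction, and the Achar duality for covers, and will occupy a substantial portion of the paper.
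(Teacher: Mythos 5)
Your overall architecture matches the paper's: introduce mild unipotence, prove it implies weak unipotence via coherent families and the cell theory comparison between $\leqLR$ and infinitesimal character norms, then verify mild unipotence partition-by-partition using the Springer duality $B \leftrightarrow C$ and the BMSZ metaplectic duality $D \leftrightarrow C^{ms}$ to feed everything into the Barbasch--Vogan norm estimate. You have also correctly identified the two genuine obstacles — implication (a) via $\leqLR$, and the compatibility of the BMSZ bijection with Richardson induction — which is exactly where the paper's Lemma~\ref{lem:translation_nonvanishing}, Corollary~\ref{cor:closure}, Propositions~\ref{prop:f_BC&f_DC} and \ref{prop:bijection_richardson} do the work.

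There is, however, a gap in your reduction for part (i). You write that for linear classical groups the LMBM unipotent ideals are a special case of $q$-unipotent ideals, ``so the statement reduces to verifying mild unipotence for $q$-unipotent infinitesimal characters in types $B$, $C$, and $D$.'' This is false for $\sl(n)$: the unipotent ideals attached to birationally rigid covers in type $A$ have infinitesimal characters of the form $\rho/d$, which are not $q$-unipotent (the $q$-unipotent characters in type $A$ are exactly the special unipotent ones). The paper needs a dedicated type~$A$ analysis — explicit norm minimization over Richardson-inducing Levis for arbitrary coset shifts $r \in (-\tfrac12, \tfrac12]$ — and this type~$A$ machinery is also a structural prerequisite for the spin-cover case, since the integral pseudo-Levi $\ckfg_\Lambda$ for a spin-cover infinitesimal character decomposes as $\so \oplus \so \oplus \bigoplus \gl(n_i)$ and the $\gl$-factors must be handled by the type~$A$ result. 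Your gesture at ``an additional twist by the half-sum of positive roots'' for spin covers does not substitute for this: the relevant infinitesimal characters are deformations $\rho_{\bfs}(\bfq)$ with nonzero $\bfs$-entries $\pm\tfrac14$, and proving their mild unipotence requires the full deformation/antisymmetric framework, not just a shift of $\rho$.

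For part (ii), your route diverges from the paper's. You propose a conceptual mechanism — matching the LMBM ``smallness'' characterization with mild unipotence through Barbasch--Vogan duality and Sommers--Achar duality for covers — and then a short residual table check. The paper instead performs a direct computational verification in \texttt{atlas}: enumerate all $\nu \in \Lambda$ with $\|\nu\| \le \|\lambda\|$ and check the dimension inequality $n_\nu \le n_\lambda$ that forces $\Ind_{\ckfg_\lambda}^{\ckfg_\Lambda}\mathbf{0} \not\preceq \Ind_{\ckfg_\nu}^{\ckfg_\Lambda}\mathbf{0}$. Your proposed conceptual reduction is speculative; no such duality-theoretic shortcut is carried out in the paper, and it is not clear that the ``smallness property'' coincides with the mild unipotence inequality after dualizing — the whole point of introducing mild unipotence is that it is the right-hand side of a chain of implications, not a known reformulation of any intrinsic LMBM property. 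If you want a self-contained argument you should plan on the brute-force finite check, which is what the paper actually does.
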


To prove the above theorem, we introduce the concept of mild unipotence (\Cref{defn:mildly_unip}) of a two-sided ideal in $\Ug$, which has already been implicitly used in \cite{BV85:unipotent}. This concept is based on the theory of cells developed by Kazhdan-Lusztig and Barbasch-Vogan (see \cite[Section~3]{BMSZ:counting} for an exposition of the theory). We then show that mild unipotence implies weak unipotence (\Cref{cor:mildly_implies_weakly}) which is equivalent to some containment conditions of nilpotent orbits in the Lie algebra attached to the dual of the integral root system of $\lambda$ (\Cref{cor:closure}). 

Then we show that McGovern's $q$-unipotent ideals are all mildly unipotent (\Cref{thm:q-unipotent_weaklyunip}). The proof is based on the bijection between special orbits in type $D$ and metaplectic special orbits in type $C$ found in \cite{BMSZ:metaplecticBV}, as well as the Springer duality between special orbits in type $B$ and type $C$. Both will be recalled in \Cref{subsec:special_partitions}. Using these bijections, we are able to reduce the case of $q$-unipotent infinitesimal characters to the case of special unipotent infinitesimal characters, for which \cite[Lemma 5.7]{BV85:unipotent} can again be applied (see \Cref{prop:norm_comparison}). Since the original proof of \cite[Lemma 5.7]{BV85:unipotent} is uniform and case-free, our proof of mild/weak unipotence of $q$-unipotent ideals is conceptually simple and does not require any complicated combinatorial computation.

We also study more general mildly unipotent infinitesimal characters in the case of type $A$ in \Cref{subsec:type_A}, for which a light combinatorial computation is needed. Combining with the case of $q$-unipotent infinitesimal characters, we can prove mild/weak unipotence of a larger class of infinitesimal characters in \Cref{subsec:general_cases}, which can be regarded as deformations of $q$-unipotent infinitesimal characters. This allows us to prove mild/weak unipotence of all unipotent ideals attached to nilpotent orbit covers of classical Lie algebras in \Cref{cor:cover_weaklyunip_classical}, which includes the case of covers of orthogonal Lie algebras $\so(N)$ that are equivariant under the spin groups $\Spin(N)$ but not the orthogonal groups $\SO(N)$.

The case of exceptional groups is analyzed in \Cref{sec:exceptional} and uses the \texttt{atlas} software \cite{atlas}.

{\bf Acknowledgements}: We would like to thank Dougal Davis, Lucas Mason-Brown and William McGovern for helpful discussions.

The work of S.Y. has been partially supported by China NSFC grants (Grant No. 12001453 and 12131018) and Natural Science Foundation of Fujian Province (Grant No. 2022J06005).

\section{Weak and mild unipotence}
\label{sec:weak_and_mild_unipotence}

\subsection{Weak unipotence}

Let $\g$ be a complex reductive Lie algebra and $\Ug$ be the universal enveloping algebra of $\g$.  
By the Harish-Chandra isomorphism, the center $\Zg$ of $\Ug$ is isomorphic to the algebra $(S \fh)^W \simeq \C[\fh^*/W]$ of $W$-invariant polynomials on $\fh^*$, where $\fh$ is the abstract Cartan subalgebra of $\g$ and $W$ is the abstract Weyl group. We identify the set of infinitesimal characters of $\Ug$ with $\fh^*/W$ and use $\chi_\lambda$ to denote the infinitesimal character of $\Ug$ corresponding to an orbit $W\cdot\lambda$ in $\fh^*/W$.
It is well known that there is a maximal primitive ideal $\Jmax(\lambda)$ of $\Ug$ for each infinitesimal character $\chi_\lambda$. Let $G$ be a connected complex reductive group with Lie algebra $\g$. Let $X^* \subset \fh^*$ be the corresponding weight lattice of $G$ and $\fh^*_\R \subset \fh^*$ be the real span of $X^*$. We fix a $W$-invariant inner product on $\fh^*_\R$ and write $\lVert\cdot\rVert$ for the associated norm.

Suppose $M$ is a $\g$-module and $\gamma \in \fh^*/W$. We define
\begin{equation}\label{eq:Pr_gamma}
{\Pr}_\gamma(M) := \{ m \in M | \forall z \in \cZ(\g), (z - \chi_\gamma(z))^k m = 0 \text{ for some positive integer } k \}
\end{equation}
to be the generalized eigenspace of $M$ with respect to the infinitesimal character $\chi_\gamma$.
Now suppose $M$ has generalized infinitesimal character $\chi_\lambda$. For any finite-dimensional representation $F$ of $G$, we can form the tensor product $M \otimes_\C F = M \otimes F$ which is again a $\g$-module. By \cite[Corollary 7.1.13]{Vogan:greenbook} (cf. \cite[Theorem 5.1]{Kostant:tensor}) we have a (finite) direct sum decomposition
\[M \otimes F = \bigoplus_{\gamma \in \fh^*/W} {\Pr}_\gamma (M \otimes F). \]
In fact, \cite[Corollary 7.1.13]{Vogan:greenbook} says that ${\Pr}_\gamma(M \otimes F) \neq 0$ only if $\gamma = \lambda + \mu$ for some weight $\mu$ of $F$.

%_\gamma$ is the projection to the component with generalized infinitesimal character $\chi_\gamma$.

\begin{definition}[{c.f. \cite[Definition 8.16]{Vogan:unitarizability}}] \label{defn:weakly_unipotent}
Let $M$ be a $\g$-module with generalized infinitesimal character $\chi_\lambda$ for $\lambda \in \fh_\R^*$. We say that $M$ is \emph{weakly unipotent} with respect to $G$ (or its weight lattice $X^*$) if, for any finite-dimensional representation $F$ of $G$ (or equivalently, $F$ with weights in $X^*$) and any $\nu \in \fh^*/W$ such that $\lVert \nu \rVert < \lVert \lambda \rVert$, we always have ${\Pr}_\nu(M \otimes F)=0$.

We call $\chi_\lambda$ a \emph{weakly unipotent infinitesimal character} if $\Jmax(\lambda)$ is weakly unipotent. 
\end{definition}

\begin{comment}
A primitive ideal $I$ having infinitesimal character $\chi_\lambda$ is called \emph{weakly unipotent} with respect to $G$ (or its weight lattice $X^*$) if for every $\g$-module $M$ with annihilator $\Ann_{\Ug}(M) = I$ and finite-dimensional representation $F$ of $\g$ with weights in $X^*$, we have
\[
\Pr_\gamma(M \otimes F) \neq 0
\] 
implies $\|\gamma\| \geq \|\lambda\|$.
\end{comment}

When $\g$ is semisimple, we will often take $G$ to the adjoint group $G_{ad}$ of $\g$. There are primitive ideals that are weakly unipotent with respect to root lattices but not weight lattices, see \Cref{ex:root_vs_weight}.

The following result is the analogue of \cite[Proposition 4.13]{Davis-Mason-Brown:Hodge} and the proof is exactly the same. It means that the notion of weak unipotence in fact only depends on the annihilator ideal of the $\Ug$-module. Therefore we can talk about weak unipotence of a two-sided ideal in $\Ug$.

\begin{proposition} \label{prop:weakly unipotent ideal}
Fix $\lambda \in \fh^*_\R$ and let $I \subset \Ug$ be a two-sided ideal with generalized infinitesimal character $\chi_\lambda$. Let $G$ be a connected algebraic group with Lie algebra $\g$.
Then the following conditions are equivalent.
\begin{enumerate}[label=(\roman*), nosep]
\item \label{itm:weakly unipotent ideal 1} 
Every $\Ug$-module annihilated by $I$ is weakly unipotent with respect to $G$.
\item \label{itm:weakly unipotent ideal 2} 
There exists a $\g$-module $M$ with $\Ann_{\Ug}(M) = I$ such that $M$ is weakly unipotent with respect to $G$.
\item \label{itm:weakly unipotent ideal 3} 
If $\lVert \nu \rVert < \lVert \lambda \rVert$ for some $\nu \in \fh^*_\R$ and $F$ is a finite dimensional algebraic representation of $G$, then $\Ann_{\cZ(\g)}(\Ug/I \otimes F) \not\subset \ker \chi_\nu$.
\end{enumerate}
\end{proposition}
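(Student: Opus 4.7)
The plan is to prove $(i) \Rightarrow (ii) \Rightarrow (iii) \Rightarrow (i)$. The first implication is trivial: since $I$ is a two-sided ideal, the left $\Ug$-module $\Ug/I$ has annihilator exactly $I$, so applying (i) to $M = \Ug/I$ yields (ii).

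The core of the remaining implications is a bimodule picture. View $\Ug/I \otimes F$ as a $(\Ug, \Ug/I)$-bimodule with left $\Ug$-action the usual diagonal one (via the coproduct) and right $\Ug/I$-action right multiplication on the first tensor factor; these two actions commute because the right action does not touch $F$, while left and right multiplication on $\Ug/I$ commute. The generalized eigenspace decomposition
\[
\Ug/I \otimes F = \bigoplus_\gamma {\Pr}_\gamma(\Ug/I \otimes F)
\]
for the left $\cZ(\g)$-action is then a decomposition by sub-bimodules, and for any left $\Ug$-module $M$ annihilated by $I$, the canonical isomorphism $(\Ug/I \otimes F) \otimes_{\Ug/I} M \cong M \otimes F$ of left $\Ug$-modules restricts, summand by summand, to the fundamental identity
\[
{\Pr}_\gamma(M \otimes F) \cong {\Pr}_\gamma(\Ug/I \otimes F) \otimes_{\Ug/I} M.
\]
A standard manipulation, exploiting the finiteness of the generalized spectrum of $\cZ(\g)$ acting on $\Ug/I \otimes F$ (so that its image is a finite product of local rings), shows that (iii) is equivalent to the vanishing ${\Pr}_\nu(\Ug/I \otimes F) = 0$ for all $\nu$ with $\lVert \nu \rVert < \lVert \lambda \rVert$.

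Granting this dictionary, $(iii) \Rightarrow (i)$ is immediate from the fundamental identity: the vanishing of ${\Pr}_\nu(\Ug/I \otimes F)$ forces the vanishing of ${\Pr}_\nu(M \otimes F)$ for every $M$ annihilated by $I$. For $(ii) \Rightarrow (iii)$, suppose $M$ is weakly unipotent with $\Ann_{\Ug}(M) = I$, so that $M$ is a faithful $\Ug/I$-module, and assume for contradiction that ${\Pr}_\nu(\Ug/I \otimes F)$ contains a nonzero element $x = \sum_i u_i \otimes f_i$ with the $f_i$ linearly independent in $F$ and some $u_i \neq 0$ in $\Ug/I$. Faithfulness produces $m \in M$ with $u_i m \neq 0$; the image of $x \otimes m$ in $M \otimes F$ under the fundamental identity equals $\sum_i u_i m \otimes f_i$, which is nonzero (by linear independence of the $f_i$) and lies in ${\Pr}_\nu(M \otimes F)$, contradicting weak unipotence of $M$.

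I do not anticipate a serious obstacle. The two points requiring care are the commutation of the two actions on $\Ug/I \otimes F$ (straightforward from the coproduct formulas) and the translation between the ideal-theoretic non-containment in (iii) and the vanishing of ${\Pr}_\nu(\Ug/I \otimes F)$, which is a routine Chinese-Remainder computation on the commutative finite-spectrum image of $\cZ(\g)$ in $\End(\Ug/I \otimes F)$. The overall argument closely follows \cite[Proposition~4.13]{Davis-Mason-Brown:Hodge}.
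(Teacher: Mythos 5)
Your proof is correct and, as far as the paper is concerned, is essentially the same argument: the paper simply cites \cite[Proposition~4.13]{Davis-Mason-Brown:Hodge} and says the proof is identical, and your bimodule picture with the isomorphism $(\Ug/I\otimes F)\otimes_{\Ug/I}M\cong M\otimes F$ restricting to each generalized eigenspace is exactly that mechanism. One small imprecision worth tightening: in the translation from condition~(iii) to the vanishing of ${\Pr}_\nu(\Ug/I\otimes F)$, finiteness of the spectrum alone is not quite enough — you need that $\cZ(\g)$ acts on $\Ug/I\otimes F$ through an \emph{Artinian} (indeed finite-dimensional) quotient, which is the actual content of Kostant's theorem / \cite[Corollary~7.1.13]{Vogan:greenbook}, so that the idempotent projecting onto the $\nu$-block lives in the image of $\cZ(\g)$ and the Chinese Remainder argument applies.
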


\subsection{Coherent families and Goldie rank representations}

Coherent families of group representations were introduced by Schmid \cite{Schmid:character}. See also \cite{Zuckerman:tensor} and \cite{Speh-Vogan:reducibility}. 
We refer the reader to \cite[Chapter 7]{Vogan:greenbook} as a general reference for coherent families in this setting. We refer the reader to \cite[\S\,3 \& \S\,4]{BMSZ:counting} for the notations.

%Consider the following setting. 
%Let $\g$ be a reductive Lie algebra over $\C$. Let $Q_\g$ and $Q^\g$ be the root lattice and the weight lattice of $\g$ respectively. Let $Q_\g \subset X^* \subset Q^\g$ be any lattice.

Fix a coset $\Lambda \in \fh^*/X^*$ where $\fh$ and let $Q$ be the root lattice of $\fg$. 
The integral Weyl group 
 $W(\Lambda)$ is defined by  
\[
W(\Lambda) := \{ w\in W |\langle{\lambda - w \lambda,}{\ckalpha}\rangle \in \mathbb{Z}, \text{ for all } \lambda \in \Lambda \text{ and coroot } \ckalpha\}
= \{ w\in W |\lambda - w \lambda \in Q\}. 
\]
It is known that $W(\Lambda)$ is the Weyl group of a root system 
\[
R(\Lambda) := \{\alpha \in \Delta(\fg,\fh) | \langle \ckalpha, \lambda \rangle \in \mathbb{Z} \text{ for all } \lambda \in \Lambda\}.
\]
Fix a positive system $R^+(\Lambda)$ of $R(\Lambda)$. 
We call an element $\lambda \in \Lambda$ \emph{dominant} if $\inn{\lambda}{\ckalpha} \geq 0$ for all $\alpha \in R^+(\Lambda)$.
Let $\Lambda^+$ be the cone of all dominant elements in $\Lambda$. 
On the other hand, we define 
\[
W_\Lambda := \{w \in W | w \lambda \in \Lambda \text{ for all } \lambda \in \Lambda\} = \{w \in W | \lambda - w \lambda \in X^*\}
\]

Therefore, the coset $W_\Lambda/W(\Lambda)$ space has a set of representatives 
 \[ B := \set{b_1, \cdots, b_k} \] 
 such that $b_i R^+(\Lambda) = R^+(\Lambda)$ for all $i = 1, \cdots, k$. 

We record the following easy lemma. 
\begin{lemma}
\label{lem:dominant}
Let $\lambda \in \Lambda$ be a dominant element. Then $b^{-1} \lambda$ is also dominant for every $b \in B$.  
\end{lemma}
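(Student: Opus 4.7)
The plan is to unwind the definitions and use the $W$-invariance of the natural pairing between $\fh^*$ and $\fh$. The only substantive input is the specific property of the representative set $B$: by construction each $b \in B$ preserves the fixed positive system $R^+(\Lambda)$, i.e.\ $bR^+(\Lambda) = R^+(\Lambda)$. Since the correspondence $\alpha \mapsto \check\alpha$ between roots and coroots is $W$-equivariant, $b$ also permutes the set of positive coroots of $R(\Lambda)$.

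Concretely, I would argue as follows. Fix $\alpha \in R^+(\Lambda)$; we want $\langle b^{-1}\lambda, \check\alpha\rangle \ge 0$. By $W$-invariance of the natural pairing between $\fh^*$ and $\fh$,
\[
\langle b^{-1}\lambda, \check\alpha\rangle = \langle \lambda, b \check\alpha\rangle = \langle \lambda, \widecheck{b\alpha}\rangle.
\]
Now $b\alpha \in R^+(\Lambda)$ by the defining property of $B$, so $\widecheck{b\alpha}$ is a positive coroot of $R(\Lambda)$. Since $\lambda$ is assumed dominant with respect to $R^+(\Lambda)$, the right-hand side is nonnegative, and hence $b^{-1}\lambda$ is dominant.

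There is essentially no obstacle here: the lemma is a direct consequence of the invariance of the pairing together with the choice of $B$ made just before the statement. The only thing one has to be careful about is that the set $B$ is chosen precisely so that each representative preserves $R^+(\Lambda)$; without this choice, a general element of $W_\Lambda$ would only permute $R(\Lambda)$ without preserving positivity, and the conclusion would fail. In particular, the lemma is really a tautological compatibility statement that justifies working with a single fixed dominant chamber when parametrizing $W_\Lambda$-translates of an infinitesimal character, which will be used in the subsequent coherent-continuation arguments.
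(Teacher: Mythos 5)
Your proof is correct and is essentially the paper's own argument: move $b$ across the pairing by $W$-invariance, then invoke the defining property $bR^+(\Lambda) = R^+(\Lambda)$ of the representatives $B$ to conclude $\langle \lambda, b\check\alpha\rangle \ge 0$. The only difference is that you spell out the $W$-equivariance of $\alpha \mapsto \check\alpha$ explicitly, which the paper leaves implicit.
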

\begin{proof}
	Let $\alpha \in R^+(\Lambda)$. Then $b\alpha \in R^+(\Lambda)$ by the definition of $B$.  
	Therefore,
	\[
     \inn{b^{-1} \lambda}{\ckalpha} = \inn{\lambda}{b\ckalpha} \geq 0.
     \qedhere
	\]
\end{proof}
% Fix an dominant element $\lambda\in \Lambda$,we have a injection 
% \[
% e_\lambda \colon
% W_\Lambda / W(\Lambda) \to 
% X^* / Q
% \]
% by sending $w \in W_\Lambda$ to the class  $(\lambda - w \lambda) + Q$ in $X^* / Q$.
% \trivial[h]{
% 	The map is well-defined.
% 
% 	Take \(w_1, w_2 \in W_\Lambda\) with \(w_1 = w_2 u\), \(u \in W(\Lambda)\):
% 
% 	- Compute
% 	  \[
% 	  e_\lambda([w_1]) = w_1\lambda - \lambda + Q \\
% 	  e_\lambda([w_2]) = w_2\lambda - \lambda + Q
% 	  \]
% 	
% 	  But \(w_1\lambda = w_2u\lambda\).
% 	
% 	- So,
% 	  \[
% 	  w_1\lambda - \lambda = w_2u\lambda - \lambda = w_2(u\lambda) - \lambda = (w_2(u\lambda) - w_2\lambda) + (w_2\lambda - \lambda)
% 	  \]
% 	- But \(u \in W(\Lambda)\), so \(u\lambda - \lambda \in Q\) (by the definition above).
% 	
% 	  Therefore,
% 	  \[
% 	  w_2(u\lambda - \lambda) \in Q
% 	  \]
% 	  because \(w_2\) acts on \(Q\), and \(Q\) is Weyl-group invariant.
% 	
% 	- Thus,
% 	  \[
% 	  w_1\lambda - \lambda \equiv w_2\lambda - \lambda \pmod{Q}
% 	  \]
% 	- Therefore, \(e_\lambda([w_1]) = e_\lambda([w_2])\) in \(X^*/Q\).
% 
% 
% }

%For each $\lambda \in \fhh^*$. Let $L(\lambda)$ be unique irreducible quotient of the Verma module $M(\lambda) := \Ug \otimes_{\Ub} \C_{\lambda-\rho}$.

We fix a Borel subalgebra $\fb$ of $\fg$ and consider the category $\Rep(\fg,\fb)$ of finitely generated $\fg$-modules that are unions of finite-dimensional $\fb$-submodules. Let $\cK(\fg,\fb)$ be the Grothendieck group of $\Rep(\fg,\fb)$.
Let $\Coh_{\Lambda}(\cK(\g,\fb))$ be the space of coherent families with value in $\cK(\fg,\fb)\otimes_\Z \C$ based on $\Lambda$, see \cite[Definition~3.3]{BMSZ:counting}. It is a $W_\Lambda$-module with the action given by $w \cdot \Psi(\nu) = \Psi(w^{-1} \nu)$ for all $\Psi\in \Coh_{\Lambda}(\cK(\g,\fb))$, $\nu \in \Lambda$ and $w \in W_\Lambda$. We restrict the action to $W(\Lambda)$ and therefore regard it as a $W(\Lambda)$-action. Moreover, there is also a $W$-action on $\Coh_{\Lambda}(\cK(\g,\fb))$ commuting with the $W_\Lambda$-action. In summary, $\Coh_{\Lambda}(\cK(\g,\fb))$ is a $W\times W(\Lambda)$-module. See \cite[Section 3.2]{BMSZ:counting}.

An element $\Psi\in \Coh_{\Lambda}(\cK(\g,\fb))$ is called \emph{basal} if $\Psi(\nu)$ is either zero or irreducible for all $\nu \in \Lambda^+$.
For a basal element, let $\braket{\Psi}_{L}$ (resp. $\braket{\Psi}_{R}$, $\braket{\Psi}_{LR}$) be the smallest  $W$-invariant (resp. $W(\Lambda)$-invariant, $W\times W(\Lambda)$-invariant) basal subspace of $\Coh_{\Lambda}(\cK(\g,\fb))$ containing $\Psi$, called the left (resp. right, two-sided) cone (representation) of $\Psi$. This defines left (resp. right, two-sided) preorders $\leqL$ (resp. $\leqR$, $\leqLR$) on the set of basal elements. The equivalence relations associated to these preorders are denoted by $\eqL$, $\eqR$ and $\eqLR$ (we will mostly use $\eqLR$ only). The equivalence classes are called left, right and two-sided cells respectively. Relevant to cone representations, we also have the notions of left, right and two-sided cell representations. The notion of two-sided cell representations induces a partial order $\leqLR$ and equivlanece relation $\eqLR$ on the set $\Irr(W(\Lambda)) = \widehat{W(\Lambda)}$ of isomorphism classes of all irreducible representations of $W(\Lambda)$. A equivalence class for $\eqLR$ on $\Irr(W(\Lambda))$ is called a two-sided cell of $\Irr(W(\Lambda))$. It is known that each two-sided cell of $\Irr(W(\Lambda))$ contains a unique Goldie rank/special representation of $W(\Lambda)$, hence gives a bijection between two-sided cells and special representations of $W(\Lambda)$.
We refer the reader to \cite[Section 5]{Joseph:Goldie_II} and \cite[Section 2]{BV83:exceptional}, as well asSection 3.4, Definition 3.20, Proposition 3.22 of \cite{BMSZ:counting} for more details.

Let $p_{J}$ denote the Goldie rank polynomial of the primitive ideal $J$. Joseph \cite{Joseph:Goldie_II} showed that $\sigma_J := \spn \Set{W(\Lambda) p_{J}}$ is an irreducible $W(\Lambda)$-subrepresentation occurring in $S(\fh)$, called the Goldie rank representation attached to $J$. By \cite[Corollary 2.16]{BV83:exceptional}, each double cell in $\Irr(W(\Lambda))$ contains a unique Goldie rank representation of $W(\Lambda)$.
Moreover, by \cite[Theorem 2.29]{BV83:exceptional}, $\sigma_J$ is a \emph{special representation} of $W(\Lambda)$ in the sense of Lusztig (\cite{Lusztig:irred_Weyl_I,Lusztig:irred_Weyl_II}), which means that its fake degree is the same as its generic degree. Let $\Irr(W(\Lambda))^{sp} \subset \Irr(W(\Lambda))$ denote the set of isomorphism classes of all special representations of $W(\Lambda)$. 
By Duflo \cite[Theorem 1]{Duflo:primitive}, all primitive ideals of infinitesimal character $\lambda$ are of the form $\Ann(L(w \lambda))$ for some $w \in W(\Lambda)$.
For $w_1, w_2 \in W(\Lambda)$, $\sigma_{\Ann(L(w_1\lambda))} = \sigma_{\Ann(L(w_2\lambda))}$ if and only if $w_1$ and $w_2$ are in the same double cell of $W(\Lambda)$, see \cite[Proposition~2.28]{BV83:exceptional}. 

\subsection{Special nilpotent orbits and dualities} \label{subsec:special_duality}

Let $\g$ be a reductive Lie algebra over $\C$ with Weyl group $W$, and $\ckfg$ be its Langlands dual Lie algebra with the same Weyl group $W$. The $\ckfh$ be the abstract Cartan subalgebra of $\ckfg$, which is identified as the linear dual $\fh^*$ of the abstract Cartan subalgebra $\fh$ of $\g$. Let $\cN_o$ and $\check{\cN}_o$ denote the set of all nilpotent orbits in $\g^*$ and $\ckfg^*$, respectively. For any $\Orb \in \cN_o$, let $\mathrm{Loc}(\Orb)$ be the set of isomnorphism classes of irreducible $G_{ad}$-equivariant local systems over $\Orb$. The Springer correspondence is an injective map
\[ 
\spr_\g : \widehat{W} \hookrightarrow \{ (\Orb, \rho) \,|\, \Orb \in \cN_o, \, \rho \in \mathrm{Loc}(\Orb) \}. 
\]
We will also abbreviate $\spr_\g$ to $\spr$ when the relevant Lie algebra $\g$ is clear from the context.
For any special representation $\sigma$, it is known that $\spr_\g(\sigma)$ is of the form $(\Orb_\sigma, \mathbbm{1})$, where $\mathbbm{1}$ stands for the trivial local system over $\Orb_\sigma$ (\cite{Lusztig:irred_Weyl_I}). We say that $\Orb_\sigma$ is the \emph{special orbit} associated with the special representation $\sigma$. Let $\cN_o^{sp}$ denote the set of all special nilpotent orbits in $\g^*$. Then we can regard $\spr_\g$ as a bijection
\begin{equation} \label{eq:springer_special} 
\spr_\g: \Irr(W)^{sp} \xrightarrow{\sim} \cN_o^{sp}. 
\end{equation}
Note that the notion of special representations is intrinsic to the Weyl group $W$, hence we also have 
\begin{equation} \label{eq:special_intrinsic}
	\spr_{\ckfg}: \Irr(W)^{sp} \xrightarrow{\sim} \check{\cN}_o^{sp},
\end{equation}
where $\check{\cN}_o^{sp}$ is the set of all special nilpotent orbits in $\ckfg^*$.

By \cite[Proposition 3.24]{BV85:unipotent}, there is an order-reversing involution on the set of two-sided cells in $W$, which we denote by $d$. Since there is a bijection between two-sided cells and special representations of $W$, we also have an order-reversing involution on the set of special representations of $W$, which we also denote by $d(\sigma) = \check{\sigma}$. At the level of special representations, this involution is given by sending a special representation $\sigma$ to the unique special representation $\check{\sigma}$ in the same double cell as $\sigma \otimes \sgn$, where $\sgn$ denotes the sign representation of $W$. Therefore we have $\check{\sigma} \eqLR \sigma \otimes \sgn$. Note that in classical case, $\sigma \otimes \sgn$ is always a special, so $\check{\sigma} = \sigma \otimes \sgn$. This is almost also true for exceptional types, except for three cases in type $E_7$ and $E_8$ (see \cite[Definition 4.5]{BV85:unipotent}).

Combing with \eqref{eq:springer_special}, we have an order-reversing involution $\cN_o^{sp}$, found by Lusztig and Spaltenstein (\cite{Spaltenstein}),
\begin{equation} \label{eq:LS_duality}
	d_{LS} = \spr_\g \circ \,d \circ \spr_\g^{-1}: \cN_o^{sp} \to \cN_o^{sp}, \quad \Orb_\sigma \mapsto \spr_\g(\Orb_{\check{\sigma}}).
\end{equation}
Barbasch and Vogan \cite{BV85:unipotent} also defined a second order-reversing bijection
\begin{equation} \label{eq:BV_duality}
	d_{BV} = \spr_{\fg} \circ \,d \circ \spr_{\ckfg}^{-1}: \check{\cN}_o^{sp} \to \cN_o^{sp}, \quad \Orb_\sigma \mapsto \spr_{\ckfg}(\Orb_{\check{\sigma}}).
\end{equation}
Moreover, $d_{BV}$ can be extended to an order-reversing map 
\begin{equation} \label{eq:BV_duality_full}
	d_{BV}: \check{\cN}_o \to \cN_o^{sp} \subset \cN_o
\end{equation}
as follows. Given any $\check{\Orb} \in \check{\cN}_o$, we choose a point $e \in \check{\Orb}$. By Jacobson-Morozov theorem, $e$ can be completed into an $\sl_2$-triple $(e,h,f)$ with $f$ nilpotent and the semisimple element $h$ lying in $\ckfh$. The element $h$ is uniquely determined up to conjugation by $W$ and is independent of the choice of $e$ or the $\sl_2$-triple. We write $h_{\check{\Orb}} = h  \in \ckfh/W$ and define $\lambda_{\check{\Orb}} = \frac{1}{2} h_{\check{\Orb}} \in \ckfh/W$. Since $\ckfh \simeq \fh^*$, we can regard $\lambda_{\check{\Orb}} \in \fh^*/W$ as a character of the center $\Zg$ of the universal enveloping algebra $\Ug$ of $\fg$. We consider the associated variety $\cV(\Jmax(\lambda_{\check{\Orb}}))$ of the maximal primitive ideal $J_{max}(\lambda_{\check{\Orb}})$ of $\Ug$ with infinitesimal character $\lambda_{\check{\Orb}}$. By \cite[Theorem 3.10]{Jospeh:associated_variety}, the associated variety of any primitive ideal in $\Ug$ is the Zariski closure of a unique nilpotent orbit $\Orb$ in $\g^*$. We define $d_{BV}(\check{\Orb}) = \Orb$. It is shown in \cite[Proposition A2]{BV85:unipotent} that the image of $d_{BV}$ is precisely $\cN_o^{sp}$. Following \cite{BV85:unipotent}, we call $\lambda_{\check{\Orb}}$ and $\Jmax(\lambda_{\check{\Orb}})$ the special unipotent infinitesimal character and special unipotent ideal, respectively, attached to $\check{\Orb}$.

%When $W = W(\Lambda)$, the set of double cells in $W(\Lambda)$ corresponds to the set of special nilpotent orbits $\Orb$ in $\g$ via the Springer correspondence \cite[Chapter~4]{Lusztig1984}. 

\subsection{Mildly unipotence}
%From now on, let $\lambda \in \Lambda^+$. 
Let $J$ be a primitive ideal of $\Ug$ with infinitesimal character $\chi_\lambda$. 
Let $\sigma_J$ be the Goldie rank representation of $W(\Lambda)$ attached to $J$, which is also the special representation attached to the double cell in $W(\Lambda)$ containing $w$. We also write $\sigma_\lambda:=\sigma_{J_{max}(\lambda)}$ for the maximal primitive ideal $J_{max}(\lambda)$ with infinitesimal character $\chi_\lambda$. 
%Recall that by , any primitive ideal $J$ with infinitesimal character $\chi_\lambda$ is of the form $J = \Ann(L(w \lambda))$ for some $w \in W(\Lambda)$.
Fix a Cartan subalgebra $\fh$ of $\fg$. For any finite dimensional representation $F$ of $\fg$, let $\wt(F) \subset \fh^*$ be the set of all weights of $F$ with respect to $\fh$.

We now introduce a variant of the notion of weak unipotence for primitive ideals, which will be justified by \Cref{lem:translation_nonvanishing} and \Cref{cor:mildly_implies_weakly} below.

\begin{definition} \label{defn:mildly_unip}
	Let $\lambda \in \fh_\R^*$ and $J$ be a primitive ideal of $\Ug$ with infinitesimal character $\lambda$. Let $\sigma_{J}$ be the Goldie rank representation of $W(\Lambda)$ attached to $J$. We say that $J$ is \emph{mildly unipotent} with respect to an algebraic group $G$ (or its weight lattice $X^*$) with Lie algebra $\g$, if whenever $\sigma_J \leqLR \sigma_\nu$ for some element $\nu \in \Lambda = \lambda + X^*$, we must have $\lVert \nu \rVert \geq  \lVert \lambda \rVert$, .
\end{definition}

\begin{lemma} \label{lem:translation_nonvanishing}
	Let $F$ be a finite dimensional representation of $\fg$ with weights in $\Lambda$. Let $\Psi$ be a basal coherent family in $\Coh_{\Lambda}(\cK(\fg,\fb))$ such that $J = \Ann(\Psi(\lambda))$.
	If $(F \otimes \Psi(\lambda))_\nu \neq 0$, then $\sigma_J \leqLR \sigma_{b^{-1}\nu}$ for some $b \in B$.
\end{lemma}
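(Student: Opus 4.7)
My plan is to pick an irreducible composition factor of the projection $(F\otimes\Psi(\lambda))_\nu$ and then compare associated varieties of primitive ideals; the coherent family identity enters only to identify which $b\in B$ appears in the conclusion.

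The multiplicativity property of a coherent family gives, in the Grothendieck group,
\[
[F\otimes\Psi(\lambda)] \;=\; \sum_{\mu\in\wt(F)} [\Psi(\lambda+\mu)],
\]
so
\[
\bigl[(F\otimes\Psi(\lambda))_\nu\bigr] \;=\; \sum_{\substack{\mu\in\wt(F)\\ \lambda+\mu\in W\nu}} [\Psi(\lambda+\mu)].
\]
Nonvanishing of the LHS yields some $\mu\in\wt(F)$ with $\Psi(\lambda+\mu)\neq 0$ and $\lambda+\mu = w\nu$ for some $w\in W$; since $\lambda+\mu,\nu\in\Lambda$ one has $w\in W_\Lambda$. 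Using the coset decomposition $W_\Lambda = \bigsqcup_{b\in B} W(\Lambda)\,b^{-1}$, I write $w = u\,b^{-1}$ with $u\in W(\Lambda)$ and $b\in B$, identifying the $b$ in the statement.

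Now $(F\otimes\Psi(\lambda))_\nu$ is a nonzero finite-length $\g$-module at infinitesimal character $\chi_\nu = \chi_{b^{-1}\nu}$, so it has an irreducible composition factor $L$, which is therefore also a composition factor of $F\otimes\Psi(\lambda)$. By the translation principle for Harish-Chandra modules (cf.\ \cite[Chapter~7]{Vogan:greenbook}), the annihilator of any composition factor of $F\otimes\Psi(\lambda)$ has associated variety contained in $\cV(\Ann(\Psi(\lambda))) = \cV(J)$, so $\cV(\Ann(L))\subseteq\cV(J)$. On the other hand, $\Jmax(b^{-1}\nu)$ is by definition the maximal primitive ideal at $\chi_{b^{-1}\nu}$, so $\Ann(L)\subseteq\Jmax(b^{-1}\nu)$ and hence $\cV(\Ann(L))\supseteq\cV(\Jmax(b^{-1}\nu))$. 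Combining yields $\cV(\Jmax(b^{-1}\nu))\subseteq\cV(J)$, which by the Springer-type correspondence between the closure order on associated varieties and the $\leqLR$-order on Goldie rank representations (\cite[Section~2]{BV83:exceptional}) is the desired cell comparison $\sigma_J \leqLR \sigma_{\Jmax(b^{-1}\nu)} = \sigma_{b^{-1}\nu}$.

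The main obstacle is the translation-principle input, namely that every composition factor of $F\otimes\Psi(\lambda)$ inherits the associated-variety bound of $\Psi(\lambda)$; this is the real content of the lemma. The remaining steps---the coherent family identity, the coset decomposition of $W_\Lambda$, and the definition of $\Jmax(b^{-1}\nu)$---are routine bookkeeping.
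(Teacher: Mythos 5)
Your approach differs substantially from the paper's (which stays entirely inside the cell/cone machinery for coherent families), but it has a genuine gap at the final step: passing from the inclusion of associated varieties $\cV(\Jmax(b^{-1}\nu)) \subseteq \cV(J)$ in $\g^*$ to the cell comparison $\sigma_J \leqLR \sigma_{b^{-1}\nu}$ in $\Irr(W(\Lambda))$. The correspondence you invoke from \cite[Section~2]{BV83:exceptional} relates the $\leqLR$-order on special representations of a Weyl group $W'$ to the closure order on special nilpotent orbits in the Lie algebra \emph{whose Weyl group is $W'$}. Here the Goldie rank representations live in $\Irr(W(\Lambda))$, so the relevant orbits live in $\ckfg_\Lambda$ (the dual of the integral root datum)---this is exactly the content of \Cref{prop:equiv_LR}---not in $\g^*$. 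When $\Lambda$ is non-integral, so that $W(\Lambda) \subsetneq W$, the map from Goldie rank representations to associated varieties in $\g^*$ (a $j$-induction to $W$ followed by Springer) is neither injective nor order-reflecting, so $\cV(\Jmax(b^{-1}\nu)) \subseteq \cV(J)$ simply does not imply $\sigma_J \leqLR \sigma_{b^{-1}\nu}$.

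The paper's own \Cref{ex:counterexample1} is a witness: $\lambda_1, \lambda_2 \in Q$ lie in the same coset $\Lambda$ with integral root system $D_5 \times D_5$ inside $\g = \so(20)$, and $\cV(J_{max}(\lambda_1)) = \cV(J_{max}(\lambda_2))$; yet the Goldie rank representations $\sigma_{\lambda_1}$ and $\sigma_{\lambda_2}$ are $\sigma_1 \boxtimes \sigma_2$ and $\sigma_2 \boxtimes \sigma_1$ (up to the duality $\check{\cdot}$) with $\sigma_1 \neq \sigma_2$, and two such representations of a product Weyl group are $\leqLR$-incomparable. So equality of associated varieties in $\g^*$ does not even force comparability of the Goldie rank cells. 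Your argument would be fine in the case $W(\Lambda) = W$; what it misses in general is precisely the extra precision encoded in the right $W(\Lambda)$-action on coherent families and the resulting containment of right/left cones, which is what the paper's proof tracks. The identification of $b$ and the translation-principle fact $\cV(\Ann(L)) \subseteq \cV(J)$ are correct, but they are not "the real content"---the cell comparison is, and it cannot be recovered from associated varieties alone.
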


\begin{proof}
	By the definition of coherent families, we have 
	\[
		\begin{split}
		{\Pr}_\nu(F \otimes L(w \lambda)) 
		&= {\Pr}_\nu(F \otimes \Psi(\lambda)) \\
		&= \sum_{\substack{\mu \in \wt(F), \\ \mu + \lambda \in W_\Lambda \cdot \nu}} \Psi(\lambda + \mu) \\
		& = \sum_{\substack{\mu \in \wt(F),\, b\in B, \\ \lambda + \mu \in W(\Lambda) \cdot b^{-1} \cdot \nu}} \Psi(\lambda + \mu) 
		\end{split}
	\]
	If ${\Pr}_\nu(F \otimes L(w \lambda)) \neq 0$, then there exists some weight $\mu$ of $F$ such that $\Psi(\lambda + \mu) \neq 0$ with $\lambda + \mu = u^{-1} b^{-1} \cdot \nu$ for some $u \in W(\Lambda)$ and $b \in B$. This means that  
	\[ (u \cdot \Psi)(b^{-1}\nu) = \Psi(u^{-1} b^{-1}\nu) =  \Psi(\lambda + \mu) \neq 0, \]
	where $u \cdot \Psi$  is given by the right $W(\Lambda)$-action on $\Coh_{\Lambda}(\cK(\fg,\fb))$. Write $u \cdot \Psi$ as a linear combination of basal elements,
	\[ u \cdot \Psi = \sum_{i=1}^{k} a_i \Psi_{i},\]
	where $0 \neq a_i \in \C$ and $\Psi_{j} \in \Coh_{\Lambda}(\cK(\fg,\fb))$ are distinct basal element. Then there exists some $j$ such that $\Psi_{j}(b^{-1}\nu) \neq 0$. 
	Again by \cite[Corollary 3.17]{BMSZ:counting}, $\Psi_{j}(d^{-1}\nu)$ is an irreducible $\Ug$-module, with the corresponding primitive ideal denoted by $I=\Ann(\Psi_{j}(d^{-1}\nu))$ and the Goldie rank representation $\sigma_I = \sigma_{\Psi_{j}}$. 
	Note that $\Psi_{j} \in \braket{\Psi}_{R}$ by definition. In particular, $\Psi_{j} \in \braket{\Psi}_{LR}$ and hence $\sigma_J \leqLR \sigma_I$. 
	
	On the other hand, we have $I \subset J_{max}(b^{-1}\nu)$. Since $b^{-1} \nu$ is dominant, by the translation principle again (\cite[Corollary 3.17]{BMSZ:counting} and \cite[Theorem 2.12]{Borho-Jantzen}), there exists a basal element $\Psi' \in \Coh_{\Lambda}(\cK(\fg,\fb))$ such that $J_{max}(b^{-1}\nu) = \Ann(\Psi'(b^{-1}\nu))$ and $\Psi' \in \braket{\Psi_{j}}_{L}$. Therefore $\sigma_I \leqLR \sigma_{b^{-1}\nu}$.
	We thus conclude that
	\[\sigma_J \leqLR \sigma_I \leqLR \sigma_{d^{-1}\nu},\]
	which is the desired result.
\end{proof}

Now \Cref{lem:translation_nonvanishing} implies the following result. 
\begin{corollary} \label{cor:mildly_implies_weakly}
    Let $J$ be a mildly unipotent primitive ideal of $\Ug$ with respect to an algebraic group $G$ of $\fg$. Then $J$ is weakly unipotent with respect to $G$ in the sense of \Cref{defn:weakly_unipotent}.
\end{corollary}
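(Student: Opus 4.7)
The approach is to verify condition (ii) of \Cref{prop:weakly unipotent ideal}, which only requires exhibiting a single $\Ug$-module annihilated by $J$ that is weakly unipotent with respect to $G$. Without loss of generality take $\lambda$ dominant, i.e.\ $\lambda \in \Lambda^+$, and choose a basal coherent family $\Psi \in \Coh_{\Lambda}(\cK(\g,\fb))$ with $J = \Ann(\Psi(\lambda))$; such a $\Psi$ is produced by the Borho--Jantzen/Joseph translation principle together with Duflo's theorem, since every primitive ideal of infinitesimal character $\chi_\lambda$ is $\Ann(L(w\lambda))$ for some $w \in W(\Lambda)$ and therefore occurs as the annihilator of the value at $\lambda$ of some basal coherent family. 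Set $M := \Psi(\lambda)$, which is irreducible by the basal hypothesis and dominance of $\lambda$.

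Given a finite-dimensional $G$-representation $F$ and $\nu \in \fh^*/W$ with ${\Pr}_\nu(M \otimes F) \neq 0$, the next step is to lift $\nu$ to an element $\tilde{\nu} \in \Lambda \cap \Lambda^+$. This is possible since the nonzero summands of $M \otimes F$ sit at infinitesimal characters of the form $\chi_{\lambda+\mu}$ for $\mu \in \wt(F) \subseteq X^*$, so some representative of $\nu$ lies in $\Lambda$, and $W(\Lambda) \subseteq W$ may be used to move it into $\Lambda^+$ without leaving its $W$-orbit. Applying \Cref{lem:translation_nonvanishing} to this $\tilde{\nu}$ then yields $b \in B$ with $\sigma_J \leqLR \sigma_{b^{-1}\tilde{\nu}}$, and \Cref{lem:dominant} guarantees $b^{-1}\tilde{\nu} \in \Lambda^+$, so this element is a legitimate test object for the mild unipotence hypothesis.

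The final step is to invoke \Cref{defn:mildly_unip}: mild unipotence of $J$ applied to $b^{-1}\tilde{\nu} \in \Lambda$ forces $\lVert b^{-1}\tilde{\nu}\rVert \geq \lVert \lambda\rVert$. Since $b \in W_\Lambda \subseteq W$ acts by isometry on the $W$-invariant inner product on $\fh^*_\R$, we conclude $\lVert\nu\rVert = \lVert\tilde{\nu}\rVert = \lVert b^{-1}\tilde{\nu}\rVert \geq \lVert\lambda\rVert$, which is exactly the weak unipotence of $M$. I do not anticipate a substantive obstacle: once \Cref{lem:translation_nonvanishing} is in hand, the corollary is essentially a definition-chase, and the only mild bookkeeping is verifying that representatives of $\nu \in \fh^*/W$ can always be chosen in $\Lambda^+$ so that the hypotheses of \Cref{lem:translation_nonvanishing} and \Cref{defn:mildly_unip} line up — which they do, because the $W$-invariant norm and the preorder $\leqLR$ on Goldie rank representations are both insensitive to the precise lift inside the $W_\Lambda$-orbit.
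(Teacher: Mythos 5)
Your proof is correct and follows the same route as the paper: translate $J$ to an irreducible module $\Psi(\lambda)$ via a basal coherent family, invoke \Cref{prop:weakly unipotent ideal} to reduce to that single module, apply \Cref{lem:translation_nonvanishing}, and then use mild unipotence together with $W$-invariance of the norm. The extra care you take in lifting $\nu$ to a dominant representative in $\Lambda^+$ and noting that $b^{-1}\tilde{\nu}$ stays dominant by \Cref{lem:dominant} is bookkeeping the paper leaves implicit but adds nothing substantively different.
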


\begin{proof}
	Assume $J$ has infinitesimal character $\chi_\lambda$ with $\lambda \in \fh_\R^*$ dominant. By the translation principle (\cite[Corollary 3.17]{BMSZ:counting}), there exists a basal coherent family $\Psi$ in $\Coh_{\Lambda}(\cK(\fg,\fb))$ such that $\Psi(\lambda)$ is an irreducible $\Ug$-module with $\Ann(\Psi(\lambda)) = J$. By \Cref{prop:weakly unipotent ideal}, we only need to prove that the irreducible $\Ug$-module $\Psi(\lambda)$ is weakly unipotent with respect to $G$. By \Cref{lem:translation_nonvanishing}, $\Pr_\nu(F \otimes \Psi(\lambda)) \neq 0$ implies that $\sigma_{J}\leqLR \sigma_{b^{-1} \nu}$ for some $b \in B$. 
	Now by \Cref{defn:mildly_unip} of mild unipotence, 
	we have $\|\nu\| = \|b^{-1} \cdot \nu\| \geq \|\lambda\|$. 
\end{proof}

\subsection{Mild unipotence via Langlands duality}

Now consider the following setting.

Let $\lambda$ be an infinitesimal character and $\Lambda = \lambda + X^*$. 
Let $\ckfg$ be the dual Lie algebra of $\g$ and
$\ckfg_\Lambda$ be the Lie algebra corresponding to the dual root datum of the integral root system of $\Lambda$. Then the Weyl group of $\ckfg_\Lambda$ is $W(\Lambda)$. 

Recall that for any element $\lambda \in \fh^* = \ckfh$, the centralizer of $\lambda$ in $\ckfg$, 
\[ \ckfg_{\lambda} := \mathfrak{z}_{\ckfg}(\lambda) = \{ \zeta \in \ckfg \ | \  [\zeta, \lambda] = 0 \} \] 
is a Levi subalgebra of $\ckfg$. Its Weyl group is $W_\lambda$, the stabilizer of $\lambda$ by the $W$-action on $\ckfh$. We have $\ckfg_{\lambda} \subset \ckfg_\Lambda$ and $W_\lambda \subset W(\Lambda)$.

Let $\sigma_\lambda$ be the Goldie rank representation attached to the maximal primitive ideal $J_{max}(\lambda)$ of $\Ug$ with infinitesimal character $\chi_\lambda$. 
Let $\sgn$ denote the sign representation of the Weyl group in question. Let $j_{W'}^W$ denote the $j$-induction of Macdonald-Lusztig-Spaltenstein (\cite[Chapter 11]{Carter}). Given any $\fg$, we always use $\bfzero$ to denote the zero nilpotent orbit in $\fg^*$. 

\begin{lemma} \label{lem:j-ind_Richardson}
	Let $\fg$ be a reductive Lie algebra over $\C$ with Weyl group $W$, and $\fm$ be its levi subalgebra with Weyl group $W_0 \subset W$. Then
		\[\spr_{\fg} \left(j_{W_0}^W \sgn \right) = \Ind_{\fm}^{\fg} \bfzero. \]
\end{lemma}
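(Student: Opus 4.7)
The plan is to deduce this identity from the classical Lusztig--Spaltenstein theorem on the compatibility between parabolic induction of nilpotent orbits and $j$-induction of the associated Springer representations, which is precisely the tool tailored for this situation.

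First, I would fix the convention for the Springer correspondence used throughout the paper: for any reductive Lie algebra, the sign representation of its Weyl group corresponds under $\spr$ to the pair consisting of the zero nilpotent orbit together with the trivial local system. Applied to the Levi $\fm$, this gives $\spr_{\fm}(\sgn) = (\bfzero_{\fm}, \mathbbm{1})$, so under the bijection \eqref{eq:springer_special} the sign representation of $W_0$ is the special representation attached to $\bfzero_{\fm}$.

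Next, I would invoke the Lusztig--Spaltenstein induction theorem for Springer representations (see the original paper \emph{Induced unipotent classes}, or Carter, Proposition~13.3.3, or Collingwood--McGovern, Theorem~7.2.5): for any nilpotent orbit $\Orb_0 \subset \fm^*$ whose Springer correspondent (with trivial local system) is $\sigma_0 \in \Irr(W_0)$, the $j$-induced representation $j_{W_0}^W \sigma_0$ is the Springer correspondent of the induced pair $(\Ind_{\fm}^{\fg} \Orb_0, \mathbbm{1})$. In particular $j_{W_0}^W \sigma_0$ is automatically special, and $\Ind_{\fm}^{\fg} \Orb_0$ is a special orbit---a fact already known for Richardson orbits, which is the case at hand.

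Specializing to $\sigma_0 = \sgn$ and $\Orb_0 = \bfzero_{\fm}$ yields $\spr_{\fg}(j_{W_0}^W \sgn) = (\Ind_{\fm}^{\fg} \bfzero, \mathbbm{1})$, which under the bijection $\spr_{\fg} \colon \Irr(W)^{sp} \xrightarrow{\sim} \cN_o^{sp}$ is exactly the statement of the lemma. There is essentially no genuine obstacle here; the lemma is a direct translation of a well-established Springer-theoretic compatibility. The only delicate point is to confirm that both sides carry the trivial local system, which holds because every Richardson orbit is special and $j$-induction of $\sgn$ is guaranteed to land in a special representation of $W$.
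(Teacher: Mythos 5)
Your proof is correct and follows essentially the same route as the paper: identify $\sgn$ with the Springer correspondent of the zero orbit, then invoke the Lusztig--Spaltenstein compatibility between $j$-induction and parabolic induction of nilpotent orbits. The paper cites Hotta--Springer (or Lusztig--Spaltenstein) for the same induction theorem you use, so there is no material difference.
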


\begin{proof}
	This follows from the fact that the Springer representation associated to the $0$ orbit (with the trivial local system) is the sign representation and \cite[Proposition 1.4]{Hotta-Springer} (or more generally, \cite[Theorem 3.5]{Lusztig-Spaltenstein}).
\end{proof}

We record the following lemma.

\begin{lemma}[Barbasch-Vogan] \label{prop:special_j} 
    Let $\nu\in \Lambda$. Then we have
    \[
	   \cksigma_\nu = j_{W_\nu}^{W(\Lambda)} \sgn.
	\]
\end{lemma}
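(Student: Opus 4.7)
The plan is first to identify $\sigma_\nu \otimes \sgn$ with the Macdonald--Lusztig--Spaltenstein representation $j_{W_\nu}^{W(\Lambda)} \sgn$ via Joseph's explicit Goldie rank polynomial, and then to upgrade this identification to the statement for $\cksigma_\nu$ by using the fact that every Macdonald representation is special.

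After replacing $\nu$ by a $W(\Lambda)$-conjugate I assume $\nu \in \Lambda^+$, so that $W_\nu$ is the standard parabolic subgroup of $W(\Lambda)$ generated by the simple reflections fixing $\nu$. By Joseph's formula \cite{Joseph:Goldie_II}, the Goldie rank polynomial $p_{\Jmax(\nu)} \in S(\fh)$ is, up to a nonzero scalar, the harmonic part of
\[
  q_\nu := \prod_{\alpha \in R^+(\Lambda),\ \langle\alpha,\nu\rangle > 0} \alpha,
\]
a polynomial of degree $|R^+(\Lambda)| - |R^+_\nu|$, where $R^+_\nu := \{\alpha \in R^+(\Lambda) : \langle\alpha,\nu\rangle = 0\}$. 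Dominance of $\nu$ forces every $w \in W_\nu$ to permute $R^+(\Lambda) \setminus R^+_\nu$ among positive roots, so $q_\nu$ (and hence its harmonic part) is $W_\nu$-invariant. Consequently $\sigma_\nu$ contains a nonzero $W_\nu$-fixed vector, and Frobenius reciprocity furnishes an inclusion $\sigma_\nu \otimes \sgn \hookrightarrow \Ind_{W_\nu}^{W(\Lambda)} \sgn_{W_\nu}$.

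Next I would compute the $b$-invariant of $\sigma_\nu \otimes \sgn$. Combining the fact that $p_{\Jmax(\nu)}$ realizes $\sigma_\nu$ at the top degree $D(\sigma_\nu) = |R^+(\Lambda)| - |R^+_\nu|$ in $S(\fh)$ (itself a consequence of Joseph's identity $\deg p_{\Jmax(\nu)} = \dim \cV(\Jmax(\nu))/2$) with Poincar\'e duality on the coinvariant algebra $S(\fh)/(S(\fh)^{W(\Lambda)}_+)$ gives
\[
  b(\sigma_\nu \otimes \sgn) = |R^+(\Lambda)| - D(\sigma_\nu) = |R^+_\nu| = b(\sgn_{W_\nu}).
\]
The uniqueness clause in the Macdonald--Lusztig--Spaltenstein definition of $j$-induction then forces $\sigma_\nu \otimes \sgn = j_{W_\nu}^{W(\Lambda)} \sgn$. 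Since every Macdonald representation is special, $\sigma_\nu \otimes \sgn$ is special, and the defining characterization of $\cksigma_\nu$ yields $\cksigma_\nu = \sigma_\nu \otimes \sgn = j_{W_\nu}^{W(\Lambda)} \sgn$.

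The principal obstacle is the identification $D(\sigma_\nu) = |R^+(\Lambda)| - |R^+_\nu|$: one needs both Joseph's theorem relating the degree of the Goldie rank polynomial to the dimension of the associated variety, and the dimension formula $\dim\cV(\Jmax(\nu)) = 2(|R^+(\Lambda)| - |R^+_\nu|)$ for the associated variety of $\Jmax(\nu)$ at a dominant $\nu$. Transcribing these from \cite{Joseph:Goldie_II} to the present setting requires some bookkeeping. A cleaner alternative avoids Joseph's polynomial entirely: one combines \Cref{lem:j-ind_Richardson}, which identifies the Springer image of $j_{W_\nu}^{W(\Lambda)} \sgn$ with the Richardson orbit $\Ind_{\ckfg_\nu}^{\ckfg_\Lambda} \bfzero$, with Barbasch--Vogan's identification of the associated variety of $\Jmax(\nu)$ under $d_{BV}$ and the intertwining of the duality $d$ on special representations with $d_{LS}$ via the Springer correspondence.
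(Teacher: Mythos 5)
The paper's proof of this lemma is a one-line citation to \cite[Corollary~5.30]{BV85:unipotent} (and \cite[Proposition~3.33]{BMSZ:counting}). Your proposal is therefore structurally different: it attempts a self-contained argument via Joseph's Goldie rank polynomial and the MLS uniqueness characterization of $j$-induction. The skeleton of your argument (show $p_{\Jmax(\nu)}$ is $W_\nu$-invariant, conclude $\sigma_\nu \otimes \sgn \hookrightarrow \Ind_{W_\nu}^{W(\Lambda)}\sgn$, compute a $b$-invariant, invoke MLS uniqueness, then use the fact that Macdonald representations are special to pass from $\sigma_\nu \otimes \sgn$ to $\cksigma_\nu$) is a perfectly reasonable shape for a proof. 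The first step and the last step are fine.

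However, the crucial $b$-invariant computation rests on incorrect formulas. First, the Goldie rank polynomial realizes $\sigma_J$ at the \emph{minimal} degree $b(\sigma_J)$ (the fake degree, which equals the $a$-invariant because $\sigma_J$ is special), not at the maximal degree $D(\sigma_J)$; this is the content of Joseph's harmonicity theorem. Second, your quoted ``Joseph's identity $\deg p_{\Jmax(\nu)} = \dim \cV(\Jmax(\nu))/2$'' has the wrong sign structure: the correct relation (via $\deg p_J = \dim \mathcal{B}_e$ for $e \in \cV(J)$ generic and the Steinberg--Springer dimension formula) is $\deg p_J = |R^+(\Lambda)| - \tfrac{1}{2}\dim\cV(J)$. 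Third, the claimed ``dimension formula $\dim\cV(\Jmax(\nu)) = 2(|R^+(\Lambda)| - |R^+_\nu|)$'' is false. For instance, in $\fg = \sl_4$ with $\nu$ dominant and $W_\nu = S_3$ one has $|R^+| = 6$, $|R^+_\nu| = 3$, so your formula predicts $\dim\cV = 6$; but in fact $\cV(\Jmax(\nu)) = \overline{\Orb_{[3,1]}}$, which has dimension $10$ (equivalently, $\sigma_\nu = S^{[3,1]}$ with $b(\sigma_\nu) = 1 = 6 - 5$). Even in the regular-integral case the formula fails: $\dim\cV(\Jmax(\nu)) = 0$ whereas $2(|R^+| - |R^+_\nu|) = 2|R^+| \neq 0$. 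The correct statement is that $\cV(\Jmax(\nu))$ is the Barbasch--Vogan dual of the Richardson orbit $\Ind_{\ckfg_\nu}^{\ckfg_\Lambda}\bfzero$, and identifying its Springer representation as $d(j_{W_\nu}^{W(\Lambda)}\sgn)$ is \emph{precisely} the content of \cite[Cor.~5.30]{BV85:unipotent}, i.e.\ of the lemma you are trying to prove. So the $b$-invariant step as written is circular once the erroneous formulas are repaired.

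Your closing remark correctly identifies both the obstruction and the escape route: combining \Cref{lem:j-ind_Richardson} with Barbasch--Vogan's associated-variety theorem and the compatibility $d_{BV} = \spr_\fg \circ d \circ \spr_{\ckfg}^{-1}$ does give the result — but that is exactly the argument in \cite[Cor.~5.30]{BV85:unipotent} which the paper cites, rather than an independent route around it.
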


\begin{proof}
  This follows from \cite[Corollary 5.30]{BV85:unipotent}, see also \cite[Proposition 3.33]{BMSZ:counting}.
\end{proof}

\begin{remark}
	When $\g$ is classical, we have
	\[
	\sigma_\nu \simeq \left(j_{W_\nu}^{W(\Lambda)} \sgn \right)\otimes \sgn,
	\]
	since the duality map tensoring with $\sgn$ preserves the set of special representations.
\end{remark}

\begin{proposition} \label{prop:equiv_LR}
	Let $\nu \in \Lambda = \lambda + X^*$ and $\sigma$ be a special representation of $W(\Lambda)$. 
	The following conditions are equivalent:
	\begin{enumerate}[label=(\roman*), nosep]
		\item $\sigma \leqLR \sigma_\nu$;
		\item $\check\sigma_\nu \leqLR \check\sigma$;
		\item 
			$\spr_{\ckfg_\Lambda}(\check{\sigma}) \preceq \Ind_{\ckfg_\nu}^{\ckfg_\Lambda} \bfzero$.
		%\item $\sigma_{W_\nu} \neq 0$. \todo{S.Y.: Lucas needs it, but we probably don't.}
	\end{enumerate} 
\end{proposition}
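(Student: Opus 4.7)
The plan is to establish the chain of equivalences in two biconditional steps, $(i)\Leftrightarrow (ii)$ and $(ii)\Leftrightarrow (iii)$, each of which will follow directly from structure that has already been set up earlier in \Cref{subsec:special_duality} together with \Cref{prop:special_j} and \Cref{lem:j-ind_Richardson}.

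For $(i)\Leftrightarrow (ii)$, I would simply invoke the fact recalled in \Cref{subsec:special_duality} that $d: \sigma \mapsto \check\sigma$ is an order-reversing involution on the set of special representations of $W(\Lambda)$ with respect to $\leqLR$. Applying $d$ to both sides of $\sigma \leqLR \sigma_\nu$ then reverses the inequality to $\check\sigma_\nu \leqLR \check\sigma$, and the converse follows since $d$ is an involution.

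For $(ii)\Leftrightarrow (iii)$, the plan is to first rewrite the right-hand side of (iii) as a Springer image. By \Cref{prop:special_j} one has $\check\sigma_\nu = j_{W_\nu}^{W(\Lambda)} \sgn$, and \Cref{lem:j-ind_Richardson} applied to the Levi subalgebra $\ckfg_\nu \subset \ckfg_\Lambda$ (whose Weyl group is exactly $W_\nu \subset W(\Lambda)$) yields
\[
\spr_{\ckfg_\Lambda}(\check\sigma_\nu) = \Ind_{\ckfg_\nu}^{\ckfg_\Lambda}\bfzero.
\]
With this identification, (iii) becomes the closure-order inequality $\spr_{\ckfg_\Lambda}(\check\sigma) \preceq \spr_{\ckfg_\Lambda}(\check\sigma_\nu)$ between two special nilpotent orbits in $\ckfg_\Lambda^*$. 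The remaining step is to invoke the compatibility of the Springer bijection between special representations of $W(\Lambda)$ and special nilpotent orbits in $\ckfg_\Lambda^*$ with the relevant orders, namely that it transports $\leqLR$ into the reverse of the closure order; this can be extracted from the identity $d_{LS} = \spr \circ d \circ \spr^{-1}$ in \eqref{eq:LS_duality} combined with the order-reversing character of both $d$ and $d_{LS}$, pinned down at the base point $\sgn \leftrightarrow \bfzero$.

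The main obstacle is not computational but bookkeeping: keeping track of the direction of $\leqLR$ (with the trivial representation at the bottom) against the closure order on nilpotent orbits (with the zero orbit at the bottom), and threading these through the order-reversing dualities $d$ and $d_{LS}$. Once the Springer correspondence on special objects is seen to reverse these orders, both equivalences fall out of the cited lemmas with no further computation.
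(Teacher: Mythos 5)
Your structure matches the paper's: first $(i)\Leftrightarrow(ii)$ by the order-reversing duality $d$, then rewrite the right-hand side of (iii) via \Cref{prop:special_j} and \Cref{lem:j-ind_Richardson} as $\spr_{\ckfg_\Lambda}(\check\sigma_\nu) = \Ind_{\ckfg_\nu}^{\ckfg_\Lambda}\bfzero$, so that (iii) becomes $\spr_{\ckfg_\Lambda}(\check\sigma)\preceq\spr_{\ckfg_\Lambda}(\check\sigma_\nu)$. Up to this point the argument is correct and coincides with the paper's.

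The gap is in the final step of $(ii)\Leftrightarrow(iii)$, where you claim that the order-reversing character of $\spr$ on special objects (from $\leqLR$ to the reverse of the closure order $\preceq$) ``can be extracted from'' the conjugation identity $d_{LS}=\spr\circ d\circ\spr^{-1}$ together with the fact that $d$ and $d_{LS}$ are order-reversing, pinned down by $\sgn\leftrightarrow\bfzero$. This does not follow formally. If $(A,\leq_A)$ and $(B,\leq_B)$ are posets with order-reversing involutions $d_A$, $d_B$, a bijection $f:A\to B$ satisfying $d_B = f\circ d_A\circ f^{-1}$ need not be order-preserving or order-reversing: both $f$ and $d_B\circ f$ satisfy the same conjugation relation, and there are commuting bijections that are neither. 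Fixing $f$ at a single point (the top of $A$ mapping to the bottom of $B$) still does not force $f$ to reverse the order globally. The fact that $\spr$ on special representations transports $\leqLR$ to the reverse of the closure order on special nilpotent orbits is a genuine theorem; the paper gets it by citing \cite[Proposition 3.23]{BV85:unipotent}. That result is nontrivial: one direction follows from the definition of associated varieties, but the converse was verified case-by-case by Barbasch and Vogan. Replacing your purported formal derivation by a direct citation to \cite[Proposition 3.23]{BV85:unipotent} closes the gap and makes the proposal correct.
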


\begin{proof}
	The equivalence between (i) and (ii) is by the duality \cite[Proposition 3.24]{BV85:unipotent}.
	By \cite[Proposition 3.23]{BV85:unipotent}
	
	%\todo{S.Y.: \cite[Proposition 3.23]{BV85:unipotent} seems to suggest that it is an order-preversing bijection, but it should be order-reversing \cite[Prop. 4.2]{BV85:unipotent}},
	Part (ii) is equivalent to closure relation $\spr_{\ckfg_\Lambda}(\check{\sigma}) \preceq \spr_{\ckfg_\Lambda}(\check{\sigma}_\nu)$ between the special nilpotent orbits in the dual Lie algebra $\ckfg$ whose corresponding Springer representations are $\check{\sigma}$ and $\check{\sigma}_\nu$. By \Cref{lem:j-ind_Richardson} and \Cref{prop:special_j}, we have
	\[\spr_{\ckfg_\Lambda}(\check{\sigma}_\nu) = \Ind_{\ckfg_\nu}^{\ckfg_\Lambda} \bfzero.\]
	This shows the equivalence between (ii) and (iii).

%    \todo{MJJ: (iv) implies (ii), the reverse is not clear to me} 
%     Now we show the equivalence between (ii) and (iv).
% 	The condition (iv) is equivalent to
% 	\begin{equation}\label{eq:inclusion1}
% 		\sigma \subseteq \Ind_{W_\nu}^{W(\lambda)} \bfone. 
% 	\end{equation} 
% 	Since $W_\nu$ is a parabolic subgroup of $W(\lambda)$,  \eqref{eq:inclusion1} is equivalent to 
% 	\begin{equation} \label{eq:inclusion2}
% 		\sigma \otimes \sgn \subseteq \Ind_{W_\nu}^{W(\Lambda)} \sgn. 
% 	\end{equation}
% 	Note that $\sgn$ itself is a right cell representation and a right cone representation of $W_\lambda$ by 
%     \cite[Proposition 3.15]{BV83:exceptional}.
% 	%Note that $\Ind_{W_\nu}^{W(\Lambda)} \sgn$ is the double cone representation of $W(\Lambda)$ with $\check\sigma_\nu = j_{W_\nu}^{W(\Lambda)} \sgn$ being the cell representation of the cone by ?? \todo{reference}. 
%     Moreover, we have
% 	\begin{equation} \label{eq:eqLR}
% 	  	\sigma \otimes \sgn \eqLR \check{\sigma}.
% 	\end{equation}
% 	by the definition of the duality \cite[Corollary 3.25]{BV85:unipotent}. Therefore \eqref{eq:inclusion2} implies
% 	\begin{equation} \label{ineq:LR}
% 		 \check{\sigma}_\nu 
%          % =  j_{W_\nu}^{W(\Lambda)} \sgn  
%          \leqLR \check{\sigma}. 
% 	\end{equation}
\end{proof}

\begin{comment}
\begin{remark}
	In the proof of \cite[Proposition 3.23]{BV85:unipotent}, the implication from the $\leqLR$ relation between special representations and closure relations of the corresponding special orbits is easy by the definition of associated varieties and the relation $\leqLR$ (see also \cite[Proposition 4.2]{BV85:unipotent}), which is the only part we will use later in applications. The reverse implication is more subtle and was verified case-by-case in \cite[Proposition 3.23]{BV85:unipotent} (with a sketch). A uniform proof should follow from \cite[Theorem  4]{Bezrukavnikov:affine_flag_nilpotent_cone}
\end{remark}
\end{comment}

\begin{corollary}\label{cor:closure}
	Let $\lambda \in \fh^*_\R$. Then the maximal primitive ideal $J_{max}(\lambda)$ is mildly unipotent if and only if
	\[
		\Ind_{\ckfg_\lambda}^{\ckfg_\Lambda} \bfzero  \preceq \Ind_{\ckfg_\nu}^{\ckfg_\Lambda} \bfzero  
	\]
    implies  $\lVert \nu \rVert \geq  \lVert \lambda \rVert$ for $\nu \in \Lambda = \lambda + X^*$.
\end{corollary}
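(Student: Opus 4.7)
The plan is to unpack the definition of mild unipotence applied to the maximal primitive ideal $J_{max}(\lambda)$ and then invoke \Cref{prop:equiv_LR} in the special case $\sigma = \sigma_\lambda$. Unlike earlier results, no genuinely new input is required; the content is really a translation across dualities.

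First, I would apply \Cref{defn:mildly_unip} directly: $J_{max}(\lambda)$ is mildly unipotent with respect to $G$ if and only if, for every $\nu \in \Lambda = \lambda + X^*$ such that $\sigma_\lambda \leqLR \sigma_\nu$, one has $\lVert \nu \rVert \geq \lVert \lambda \rVert$. So the task reduces to reformulating the cell-order condition $\sigma_\lambda \leqLR \sigma_\nu$ as the claimed closure relation between Richardson orbits in $\ckfg_\Lambda^*$.

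Next, I would take $\sigma = \sigma_\lambda$ in \Cref{prop:equiv_LR}. Since $\sigma_\lambda$ is a Goldie rank representation, it is special, so the proposition applies and gives
\[
\sigma_\lambda \leqLR \sigma_\nu
\quad \Longleftrightarrow \quad
\spr_{\ckfg_\Lambda}(\check{\sigma}_\lambda) \preceq \Ind_{\ckfg_\nu}^{\ckfg_\Lambda} \bfzero.
\]
To finish, I need to identify the left-hand side with $\Ind_{\ckfg_\lambda}^{\ckfg_\Lambda} \bfzero$. This is immediate from the combination of \Cref{prop:special_j}, which gives $\check{\sigma}_\lambda = j_{W_\lambda}^{W(\Lambda)} \sgn$, and \Cref{lem:j-ind_Richardson} applied to the Levi subalgebra $\ckfg_\lambda \subset \ckfg_\Lambda$, which gives $\spr_{\ckfg_\Lambda}(j_{W_\lambda}^{W(\Lambda)} \sgn) = \Ind_{\ckfg_\lambda}^{\ckfg_\Lambda} \bfzero$.

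Combining these two equivalences, the condition $\sigma_\lambda \leqLR \sigma_\nu$ is the same as $\Ind_{\ckfg_\lambda}^{\ckfg_\Lambda} \bfzero \preceq \Ind_{\ckfg_\nu}^{\ckfg_\Lambda} \bfzero$, and the corollary follows by substituting this reformulation into the criterion from \Cref{defn:mildly_unip}. There is no real obstacle here; the only mild care needed is to note that $\sigma_\lambda$ is special (hence \Cref{prop:equiv_LR} applies verbatim) and that $\ckfg_\lambda$ is indeed a Levi of $\ckfg_\Lambda$, which is explicitly recorded in the paragraph preceding \Cref{lem:j-ind_Richardson}.
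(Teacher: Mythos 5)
Your proposal is correct and matches the paper's proof: the paper likewise applies the equivalence of conditions (i) and (iii) in \Cref{prop:equiv_LR} to \Cref{defn:mildly_unip}, after identifying $\spr_{\ckfg_\Lambda}(\check{\sigma}_\lambda) = \Ind_{\ckfg_\lambda}^{\ckfg_\Lambda}\bfzero$ via \Cref{prop:special_j} and \Cref{lem:j-ind_Richardson}. You spell out these steps in slightly more detail, but the argument is the same.
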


\begin{proof}
    As in the proof of \Cref{prop:equiv_LR}, we have $\spr_{\ckfg_\Lambda}(\check{\sigma}_\lambda) = \Ind_{\ckfg_\lambda}^{\ckfg_\Lambda}\bfzero$. Now we appeal to the equivalence of the conditions (i) and (iii) in \Cref{prop:equiv_LR} in \Cref{defn:mildly_unip}.
\end{proof}

\subsection{Special unipotent ideals} \label{subsec:special_unipotent}

Let $\g$ be a general semisimple complex Lie algebra. It is shown in \cite[Proposition 5.10]{BV85:unipotent} that, when $\check{\Orb}$ is an even nilpotent orbit of $\check{\g}$, then the special unipotent ideal attached to $\check{\Orb}$ is weakly unipotent. Below we provide a slightly different proof of this result. First, we recall the following important result from \cite{BV85:unipotent}, which is also a key ingredient in the proof of \Cref{thm:q-unipotent_weaklyunip}.

\begin{proposition}\label{prop:norm_comparison}
	Let $\fg$ be any semisimple Lie algebra over $\C$ and $\nu$ be a semisimple element of $\ckfg$ that is integral (i.e., $\exp[2\pi i \ad(\nu)] = \Id_{\ckfg}$). Let $\ckfp = \ckfg_\nu\oplus \ckfu$ be the parabolic subalgebra defined by $\nu$. Suppose $\check{\Orb}$ is a nilpotent orbit in $\ckfg$ such that 
	\[ \check{\Orb} \preceq \Ind^{\ckfg}_{\ckfg_\nu} \bfzero, \]
	that is, $\check{\Orb}$ contains an element $\check{e}$ such that $\check{e} \in \fu$. Let $h$ be the semisimple element of an $\sl_2$-triple of $\check{\Orb}$ and $\lambda_{\check{\Orb}} = \frac{1}{2} h$. Then 
	\[ \lVert \nu\rVert \geq \lVert \lambda_{\check{\Orb}} \rVert,\]
	with equality holding if and only if $\nu$ is conjugate to $\lambda_{\check{\Orb}}$. In this case, $\check{\Orb}$ must be an even orbit and $\ckfp$ is the Jacobson-Morozov parabolic subalgebra attached to $e$ (cf. \cite[Corollary 5.6]{BV85:unipotent}) %\footnote{S.Y.: This is a birational induction}
\end{proposition}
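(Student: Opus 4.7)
The plan follows the strategy of Barbasch--Vogan in \cite[Lemma 5.7]{BV85:unipotent}: put $\nu$ and the characteristic $h$ of $\check{\Orb}$ into a common Cartan, then extract the norm inequality from a Cauchy--Schwarz bound on the Killing-form pairing $\langle \nu, h \rangle$.

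\emph{Step 1 (Adapting the $\sl_2$-triple to $\nu$).} Fix a Cartan $\ckfh$ with $\nu \in \ckfh$; integrality of $\nu$ gives a $\Z$-grading $\ckfg = \bigoplus_n \ckfg_n$, where $\ckfg_n = \{x : [\nu,x] = nx\}$, so $\ckfu = \bigoplus_{n>0}\ckfg_n$ and $\ckfg_\nu = \ckfg_0$. Start with any $\check{e} \in \check{\Orb} \cap \ckfu$. By a parabolic version of Jacobson--Morozov, followed by conjugation by elements of $\exp(\ckfu)$ and of the Levi $\ckfg_0$, we may arrange an $\sl_2$-triple $(\check{e}, h, \check{f})$ with $\check{e} \in \ckfu$, $h \in \ckfh$ satisfying $[\nu, h] = 0$, and $\check{f} \in \bigoplus_{n<0}\ckfg_n$. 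Since characteristics of $\check{\Orb}$ are $W$-conjugate, we identify $\lambda_{\check{\Orb}} = h/2$ (using the Weyl-invariant norm).

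\emph{Step 2 (The pairing identity).} Decompose $\check{e} = \sum_{\alpha \in R(\check{e})} c_\alpha e_\alpha$ in the root-space decomposition; the conditions $[h,\check{e}] = 2\check{e}$ and $\check{e} \in \ckfu$ force $\alpha(h) = 2$ and $\alpha(\nu) = n_\alpha \geq 1$ for every $\alpha$ in the support $R(\check{e})$. Writing $\check{f} = \sum_\alpha c'_\alpha e_{-\alpha}$, the Cartan component of the identity $[\check{f},\check{e}] = h$ gives $h = -\sum_{\alpha \in R(\check{e})} c_\alpha c'_\alpha h_\alpha$, and pairing against $h$ and $\nu$ under the Killing form yields
\[
\lVert h \rVert^2 = -2\sum_\alpha c_\alpha c'_\alpha,
\qquad
\langle \nu, h \rangle = -\sum_\alpha c_\alpha c'_\alpha\, n_\alpha.
\]
The core technical input is the termwise positivity $-c_\alpha c'_\alpha > 0$, which follows from the standard Chevalley normalization of the root $\sl_2$-triples $(e_\alpha, h_\alpha, e_{-\alpha})$ combined with positive-definiteness of the restricted Killing form on the real form of $\ckfh$. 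Together with $n_\alpha \geq 1$, this yields the key inequality
\[
\langle \nu, h \rangle \;\geq\; \tfrac{1}{2}\lVert h \rVert^2.
\]

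\emph{Step 3 (Cauchy--Schwarz and equality analysis).} Combining the above with Cauchy--Schwarz,
\[
\tfrac{1}{2}\lVert h \rVert^2 \leq \langle \nu, h \rangle \leq \lVert \nu \rVert \cdot \lVert h \rVert,
\]
so $\lVert \nu \rVert \geq \tfrac{1}{2}\lVert h \rVert = \lVert \lambda_{\check{\Orb}} \rVert$. Equality forces $\nu$ proportional to $h$ (Cauchy--Schwarz) and $n_\alpha = 1$ for every $\alpha \in R(\check{e})$ (Step 2), giving $\nu = h/2$. Integrality of $h/2$ then forces $\alpha_i(h) \in \{0,2\}$ for every simple root, i.e., $\check{\Orb}$ is even, and $\ckfp_\nu = \ckfp_{h/2}$ coincides with the Jacobson--Morozov parabolic of $\check{e}$. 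The main obstacle is the termwise sign analysis of Step 2: while $\lVert h \rVert^2 > 0$ ensures the total sum is positive, propagating the bound $n_\alpha \geq 1$ through the sum genuinely requires the Chevalley-basis structure of $\sl_2$-triples adapted to a compact real form; the remainder of the argument is formal once Steps 1 and 2 are in place.
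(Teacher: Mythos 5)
Your reconstruction of the Barbasch--Vogan argument gets the broad strategy right---put $\nu$ and $h$ in a common Cartan, derive a pairing identity, and finish with Cauchy--Schwarz---but Step 2 contains a genuine unjustified gap, and you yourself flag it as ``the main obstacle'' without actually closing it.

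The problem is the termwise positivity claim $-c_\alpha c'_\alpha>0$ (with your bracket convention; note that $[\check{e},\check{f}]=h$ is the standard normalization, so $[\check{f},\check{e}]=-h$ and the correct sign is $c_\alpha c'_\alpha > 0$). This is \emph{not} a consequence of positive-definiteness of the Killing form on $\ckfh_\R$, which is a statement purely about the Cartan and says nothing about the individual coefficients $c_\alpha, c'_\alpha$ of $\check{e}$ and $\check{f}$ in the root-space decomposition. What is known without extra work is only the \emph{total} positivity $\langle \check{e},\check{f}\rangle = \tfrac12\lVert h\rVert^2 > 0$; but a weighted sum $\sum_\alpha c_\alpha c'_\alpha\,n_\alpha$ with $n_\alpha\geq 1$ is not bounded below by $\sum_\alpha c_\alpha c'_\alpha$ unless the individual summands are nonnegative. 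To make termwise positivity hold, one typically needs to put the triple in Cayley-normal form $\check{f}=-\theta(\check{e})$ for a compact Cartan involution $\theta$ stabilizing $\ckfh$, so that $c'_\alpha = \overline{c_\alpha}$ and each product is $|c_\alpha|^2\geq 0$; but establishing that such a normalization can be achieved \emph{while simultaneously keeping $\check{e}\in\ckfu$ and $h,\nu$ in the same Cartan} is precisely the non-trivial content, and it is not addressed. Relatedly, the Step 1 claim that one can arrange $\check{f}\in\bigoplus_{n<0}\ckfg_n$ after conjugation by $\exp(\ckfu)$ and the Levi is stated without justification; the parabolic Jacobson--Morozov lemma gives $h$ in the Levi, but does not by itself place $\check{f}$ in the opposite nilradical. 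So the chain of inequalities you write down is correct \emph{if} the termwise positivity holds, but the argument you give for that step does not actually prove it.

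For comparison, the paper does not attempt to reprove this statement: it simply cites Lemma 5.7 and the end of the proof of Proposition 5.10 in \cite{BV85:unipotent}, where the norm inequality, the equality analysis, and the identification of the Jacobson--Morozov parabolic are all established. If you want a self-contained proof you would need either to supply the missing Cayley-triple normalization argument (carefully verifying compatibility with the grading defined by $\nu$), or to reproduce the actual Barbasch--Vogan argument, which handles the positivity differently.
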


\begin{proof}
	This follows from Lemma 5.7 and the last part of the proof of Proposition 5.10 in \cite{BV85:unipotent}.
\end{proof}

\begin{theorem}[{\cite[Prop. 5.10]{BV85:unipotent}}] \label{thm:special_unipotent}
	%\todo{S.Y.: For general $\check{\Orb}$, this should follow from the fact that weak unipotence is preserved by parabolic induction. }
	All special unipotent ideals attached to even $\check{\Orb}$ are mildly unipotent with respect to the weight lattice. In particular, they are weakly unipotent. 
	% for adjoint groups.
\end{theorem}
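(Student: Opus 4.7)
The plan is to combine the Langlands-dual reformulation of mild unipotence in \Cref{cor:closure} with the norm inequality of \Cref{prop:norm_comparison}. Set $\lambda = \lambda_{\check{\Orb}} = \tfrac{1}{2} h_{\check{\Orb}}$, so the special unipotent ideal in question is $J_{max}(\lambda)$. The first step is to observe that, because $\check{\Orb}$ is even, the operator $\ad(h_{\check{\Orb}})$ takes even integer values on every root space. Consequently $\langle \ckalpha, \lambda\rangle \in \Z$ for every root $\alpha$, so $R(\Lambda) = \Delta(\fg,\fh)$ and $\ckfg_\Lambda = \ckfg$; moreover, for each $\nu \in \Lambda = \lambda + X^*$, the element $\nu$ is itself integral in the sense $\exp[2\pi i\, \ad \nu] = \Id_{\ckfg}$ required by \Cref{prop:norm_comparison}.

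The second step is a key identification. The centralizer $\ckfg_\lambda$ equals $\ckfg_{h_{\check{\Orb}}}$, which is the Levi of the Jacobson--Morozov parabolic $\ckfp_{JM}$ attached to any $e \in \check{\Orb}$. A classical fact about even orbits (and implicit in the last clause of \Cref{prop:norm_comparison}) is that such an orbit coincides with the Richardson orbit induced from its Jacobson--Morozov Levi, so
\begin{equation*}
\Ind_{\ckfg_\lambda}^{\ckfg} \bfzero \;=\; \check{\Orb}.
\end{equation*}

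The third step invokes \Cref{cor:closure}: mild unipotence of $J_{max}(\lambda)$ with respect to $X^*$ is equivalent to the implication that $\Ind_{\ckfg_\lambda}^{\ckfg}\bfzero \preceq \Ind_{\ckfg_\nu}^{\ckfg}\bfzero$ forces $\lVert \nu \rVert \geq \lVert \lambda \rVert$ for all $\nu \in \Lambda$. By the identification above, the hypothesis reads $\check{\Orb} \preceq \Ind_{\ckfg_\nu}^{\ckfg}\bfzero$, i.e., there is a representative $\check{e} \in \check{\Orb}$ lying in the nilradical of the parabolic subalgebra $\ckfp = \ckfg_\nu \oplus \ckfu$ cut out by $\nu$. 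Since $\nu$ is integral by the first step, \Cref{prop:norm_comparison} applies and yields exactly $\lVert \nu \rVert \geq \lVert \lambda_{\check{\Orb}} \rVert = \lVert \lambda \rVert$. This establishes mild unipotence, and then \Cref{cor:mildly_implies_weakly} immediately upgrades it to weak unipotence.

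I do not expect any genuine obstacle: all the analytic content is already packaged in \Cref{prop:norm_comparison}, and the cohomological/cell-theoretic content is packaged in \Cref{cor:closure}; the proof is simply a matter of aligning them. The point worth emphasizing is the double role of the evenness hypothesis: it ensures both that $\lambda$ is integral (so $\ckfg_\Lambda = \ckfg$ in the criterion) and that $\ckfg_\lambda$ is the full Jacobson--Morozov Levi (so that $\Ind_{\ckfg_\lambda}^{\ckfg}\bfzero$ is precisely $\check{\Orb}$ rather than a strictly larger orbit), and both features are needed to put the hypothesis in the form demanded by \Cref{prop:norm_comparison}.
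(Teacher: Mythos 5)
Your proposal is correct and follows essentially the same route as the paper's one-line proof, which likewise combines \Cref{cor:closure} (with $\ckfg_\Lambda = \ckfg$, forced by evenness) with \Cref{prop:norm_comparison}; you have simply unpacked the details that the paper leaves implicit. One small remark: the identification $\Ind_{\ckfg_\lambda}^{\ckfg}\bfzero = \check{\Orb}$ is true and pleasant, but not strictly needed — the weaker containment $\check{\Orb} \preceq \Ind_{\ckfg_\lambda}^{\ckfg}\bfzero$ (which holds for any $\sl_2$-triple, even or not, since $\check{e}$ lies in the nilradical of the parabolic cut out by $h$) already puts the hypothesis of \Cref{cor:closure} into the form demanded by \Cref{prop:norm_comparison}; the indispensable use of evenness is the integrality of $\lambda$ and of every $\nu\in\Lambda$, which is what makes \Cref{prop:norm_comparison} applicable and $\ckfg_\Lambda=\ckfg$.
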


\begin{proof}
	This follows from $\|b\cdot \nu\| = \|\nu\|$ (for $b\in B$),  \Cref{cor:closure} (where $\ckfg_\Lambda = \ckfg$) and \Cref{prop:norm_comparison}.
\end{proof}

\begin{remark}
	Both the original proof of \cite[Proposition 5.10]{BV85:unipotent} and our proof use \Cref{prop:norm_comparison} in the last parts. See \Cref{rem:McGovern} for the difference.
\end{remark}

\section{The case of classical groups} \label{sec:classical}

\subsection{Notations on partitions}\label{partition_notation}

In classical groups it will be helpful  
to have a description of 
the elements of $\nilcone_o$ 
%and $\nilcone_{o,c}$
and the map $d$
in terms of partitions.  We introduce that notation
following the references \cite{C-M},  \cite{Carter}, \cite{Spaltenstein}.% \cite{cm}, \cite{ca}, and \cite{spa1}).

Let $\parti (N)$ denote the set of partitions of $N$.
For $\bfd \in \parti(N)$,  we write $\bfd = [d_1, \dots, d_k]$, 
where $d_1 \geq \dots \geq d_k > 0$ and $|\bfd|:=  \sum_{j=1}^k d_j$ is equal to $N$. Let $\#\bfd$ denote the number of members of $\bfd$ (counting multiplicities).
Let
$m_\bfd(s)= \# \{j \ | \ d_j = s \}$
denote the multiplicity of the part $s$ in $\bfd$.  We use $m(s)$ if the partition is clear. If $s_1 > s_2 > \cdots > s_d$ are all distinct parts of $\bfd$, we also write $\bfd = [s_1^{m(s_1)}, s_2^{m(s_2)} \dots, s_d^{m(s_d)}]$.
The set of nilpotent orbits $\nilcone_{o}$ in $\g=\mathfrak{sl}_{n}$ under the 
adjoint action of $G=\SL_n$  is in bijection with 
$\parti (n)$. % when $\g=\mathfrak{sl}_{n+1}$,  of type $A_n$.

The \emph{height} of a part $s$ in $\bfd$ is the number
\[ h_\bfd(s):=\#\{ d_j \, | \, d_j \geq s \}.\]
We will also write $h(s)$ if the partition is clear. The \emph{transpose partition} of $\bfd \in \parti(N)$ is the partition $\bfd^t = [d^t_1 \geq \cdots d^t_k > 0] \in \parti(N)$ defined by 
\[ d^t_i = \# \{ j \ | \ d_j \geq i \}. \]
%If we set $j = h(s)$, then $\lambda^t$ is the partition with part $h(s)$ occurring $\lambda_j - \lambda_{j+1}=s- \lambda_{j+1}$ times.
%Let's just write $h_i$ for the height (or $h(i)$).  multiplicity is $r_i$.  $\lambda$ is suppressed.
%In terms of Young diagrams, the  position $(s,h(s))$ is a corner of the diagram, writing each part $\lambda_i$ as the boxes with upper right corner $(1,i), \dots (\lambda_i,i)$. In other words, we have $\lambda_{h(s)} = s$ and $\lambda_{h(s)+1} < \lambda_{h(s)}$.

For $\epsilon \in \{ 0,1 \}$,
let $V=V_\epsilon$ be a vector space (over $\C$) of dimension $N$, equipped with a nondegenerate bilinear form satisfying 
$\langle v, v' \rangle = (-1)^\epsilon \langle v', v \rangle$ for $v,v' \in V$. 
Let $G=G_\epsilon(V) \subset \SL(V)$ be the classical Lie group consisting of linear automorphisms of $V$ fixing the bilinear form, so that $\g_\epsilon(V) = \Or(N) = \Or(N)$ is the (disconnected) orthogonal group when $\epsilon =0$,
and $\g_\epsilon(V) = \Sp(V) = \Sp(N)$ when $\epsilon=1$ and $N$ is even. Let $\g_\epsilon(V)$ be the Lie algebra of $G_\epsilon(V)$.
% associated to the form on $V$, so that $\g_\epsilon(V) = \mathfrak{so}_{N}$ when $\epsilon =0$ and $\g_\epsilon(V) = \mathfrak{sp}_N$ when $\epsilon=1$ and $N$ is even.

Let $$\parti_{\epsilon} (N):= \{\bfd \in \parti(N)  \ | \   m(s) \equiv 0 \,\bmod 2 \text{ whenever }   s \equiv \epsilon  \,\bmod 2 \}.$$
A partition $\bfd \in \parti_\epsilon(N)$ will be referred as an $\epsilon$-partition. Then the set of nilpotent orbits $\nilcone_{o}$ in $\g_\epsilon(V)$ under the  group $G_\epsilon(V)$ is given by $\parti_{\epsilon} (N)$. We will also write $\parti_{C} (2n) = \parti_{1} (2n)$, $\parti_{B} (2n+1) = \parti_{0} (2n+1)$ and $\parti_{0} (2n)$ as  $\parti_{D} (2n) = \parti_{0} (2n)$.
%We sometimes call a partition $\bfd \in \parti_\epsilon(N)$ an $\epsilon$-partition.
For $\bfd \in \parti(N)$ or $\bfd \in \parti_\epsilon(N)$, we denote by $\Orb_\bfd$ the corresponding
nilpotent $G$-orbit in the Lie algebra $\g$. We will often not distinguish the set of $G$-orbits and the set of relevant partitions. 

 Note that $\parti_{\epsilon} (N)$ also parametrize nilpotent orbits under the identity component group $G_\epsilon(V)^\circ$, except that, when $\g$ is of type $D$, those partitions
with all even parts, called the \emph{very even partitions}, correspond to two different $\SO(V)$-orbits in $\nilcone_{o}$, called
the \emph{very even orbits} (in this case $4$ divides $N$). In what follows we will never have a need to seperate the very even $\SO(N)$ orbits, so we will not bother to introduce extra notation to distinguish between very even orbits.  

%$\parti_{1} (2n)$ when $\g$ is of type $C_n$; by $\parti_{0} (2n+1)$ when $\g$ is of type $B_n$; and by   $\parti_{0} (2n)$ when $\g$ is of type $D_n$, except that those partitions with all even parts (called the \emph{very even partitions}) correspond to two $\SO(V)$-orbits in $\nilcone_{o}$ (called the \emph{very even orbits}).

There is a partial order on $\parti(N)$ 
defined by the dominance relation on partitions,
\[\bfp \preceq \bfq \iff \sum_{i=1}^k p_i \leq \sum_{i = 1 }^k q_i, \qquad \forall k \geq 1.\]
This restricts to partial orders on the subsets $\parti_\epsilon(N)$ and 
and these partial orders coincide
with the partial orders on the sets of nilpotent orbits given
by the closure relation.  %We will refer to nilpotent orbits and partitions
interchangeably in the classical groups (with the
caveat mentioned earlier for the very even orbits in type $D$).

Let $X=B$, $C$, or $D$.
Let $N$ be even (resp. odd) if $X$ is of type $C$ or $D$ (resp. $B$).
%The $X$-collapse of a partition $\bfd \in \parti(N)$ is defined 
%for $N$ is even when $X = C$ or $D$ and $N$ is odd when $X=B$.  
The {\it $X$-collapse} of $\bfd \in \parti(N)$ is defined as the
unique maximal partition $\bfd_X \in \parti_{X}(N)$ dominated by $\bfd$, i.e., we have
$\bfd_X \preceq \bfd$ and 
if $\mu \in \parti_{X}(N)$ and $\mu \preceq \bfd$, then $\mu \preceq \bfd_X$.
The $X$-collapse always exists and is unique. 

\subsection{Special partitions and the duality maps} \label{subsec:special_partitions}

All nilpotent orbits of type $A$ are special.  To describe the special nilpotent orbits in other classical Lie algebras and the extra notion of metaplectic special orbits in terms of partitions, we define four sets of partitions, with $\epsilon' \in \{0,1\}$, as follows:
\begin{equation}\label{eq:special_condition}
	\parti_{\epsilon,\epsilon'} (N) := \{\lambda \in \parti_{\epsilon}(N)  \ | \   h_\bfd(s) \equiv \epsilon' \,\bmod 2  \text{ whenever }  s \equiv \epsilon \,\bmod 2  \}.
\end{equation}
Note that, when $N$ is odd, the set is nonempty only when $(\epsilon,\epsilon') = (0,1)$ because of the $s=0$ case.
For $N$ even, the set is nonempty for $(\epsilon,\epsilon') = (0,0), (1,0)$ and $(1,1)$.
Then the partitions for the special orbits in Lie algebras of type $B_n,C_n$ and $D_n$ are given by 
$\parti^{sp}_B (2n+1):= \parti_{0,1} (2n+1)$, $\parti^{sp}_C (2n):=  \parti_{1,0} (2n)$
and $\parti^{sp}_D (2n):= \parti_{0,0} (2n)$.
The case of $(\epsilon,\epsilon') = (1, 1)$ leads to a second subset $\parti^{ms}_C (2n):= \parti_{1,1} (2n)$ of $\parti_C (2n)$.  We refer to the corresponding nilpotent orbits in type $C$ as the \emph{metaplectic special} nilpotent orbits. These four sets inherit the partial order from the set of all partitions, which agrees with the closure order of the corresponding nilpotent orbits. 

\begin{remark}
	The notion of metaplectic special orbits appeared earlier in \cite{Moeglin} (where it is called anti-special orbits),  \cite{Jiang} and \cite{BMSZ:metaplecticBV}. Note that \cite[Definition 1.1]{BMSZ:metaplecticBV} defines metaplectic special orbits as those corresponding to partitions of type $C$ whose transpose is of type $D$, which can easily be seen as equivalent to that of $\parti_{1,1} (2n)$.
	Here we are mostly following the definitions and notations from \cite[Section 2.2]{JLS:Duality}, except that metaplectic special orbits are referred to as \emph{alternative special orbits} and are denoted as $\parti^{asp}_C(2n)$ there.
\end{remark}

We will see here that there are order-preserving bijections between the sets $\parti^{sp}_B (2n+1)$ and $\parti^{sp}_C (2n)$ 
(see \cite{Spaltenstein}, \cite[Proposition 4.3]{Kraft-Procesi:special}), and between the sets $\parti^{sp}_D (2n)$ and $\parti^{ms}_C (2n)$. Given $\bfd =[d_1 \geq \dots \geq d_{k-1} \geq d_k > 0]$,
define the partitions 
\[\bfd^{-} = [d_1, \ldots, d_{k-1}, d_k -1 ]\]
and
\[\bfd^{+} = [d_1 +1, d_2, \dots, d_{k-1}, d_k].\]
Then the bijections are given as follows, denoted as $f_{XY}$:
\begin{equation}\label{eq:bijection_f}
	\begin{aligned}
		& f_{BC}: \parti^{sp}_B (2n+1) \to \parti^{sp}_C (2n), \quad f(\bfd) = (\bfd^{-}) _C \\
		& f_{CB}: \parti^{sp}_C (2n) \to \parti^{sp}_B(2n+1) , \quad  f(\bfd) = (\bfd^{+}) _B \\
		& f_{DC}: \parti^{sp}_D(2n) \to \parti^{ms}_C (2n) , \quad  f(\bfd) = ((\bfd^{+})^-) _C \\
		& f_{CD}: \parti^{ms}_C (2n) \to \parti^{sp}_D(2n) , \quad  f(\bfd) = \bfd _D  \\
	\end{aligned} 
\end{equation}
Note that in general $f_{XY}$ maps $\parti_{\epsilon,\epsilon'}$ to $\parti_{1-\epsilon,1-\epsilon'}$. Moreover, $f_{XY}$ and $f_{YX}$ are inverse to each other. The pair of bijections $f_{BC}$ and $f_{CB}$ is also sometimes referred as the Springer duality map. All these bijections are order-preserving. 

%The maps $f_{BC}$ and $f_{CB}$ preserve the codimensions of the orbits in the nilpotent cones (and hence also preserve the dimensions since the nilpotent cones of $\so(2n+1)$ and $\sp(2n)$ have the same dimension). The maps $f_{DC}$ and $f_{CD}$ preserve codimension (as we shall see).  More precisely, the $f_{DC}$ map sends an orbit of dimension $N$ to one of dimension $N+2n$.  The shift is because the minimal orbit in $C_n$ is the minimal element in $\parti^{ms}_C (2n)$.

We just remark that the Lusztig-Spaltenstein duality map $d_{LS}$ and the Barbasch-Vogan duality map $d_{BV}$ in \Cref{subsec:special_duality} are given by $d_{LS} (\bfd) = (\bfd^t)$ and $d_{BV} = d_{LS} \circ f = f \circ d_{LS}$ respectively. Also there is a metaplectic BV duality. See \cite[Section 2.2]{JLS:Duality} and \cite{BMSZ:metaplecticBV} for details. We will not need these in the rest of the paper.

%Note that $d_{LS}$ goes between types $B_n$ and 
%$C_n$, but stays with in type $D$ 
%For $X \in \{B,C,D,C'\}$, write $j(X)$ for the type of the codomain of $f$.  Write $d(X)$ for the internal dual type.
%Then $\epsilon'_X \equiv \epsilon_{j(X)} 

We identify the Cartan subalgebra $\fh$ of $\g = \so(2n+1) \subset \sl(2n+1)$ with $\C^n$, with the root system
\begin{equation}\label{eq:roots_B}
	\Phi:=\{e_i\pm e_j\mid 1\leq i<j\leq n\}\sqcup \{ \pm e_i\mid 1\leq i\leq n\}\subset \fh^*.
\end{equation}
and the positive root system
\begin{equation}\label{eq:posroots_B}
	\Phi^+:=\{\pm e_i \pm e_j\mid 1\leq i<j\leq n\}\sqcup \{ e_i\mid 1\leq i\leq n\}\subset \fh^*.
\end{equation}
Here $e_1, e_2,\cdots, e_n$ is the standard basis of $\C^n$, and we
identify the dual space $\fh^* = (\C^n)^*$ with $\fh = \C^n$ so that the basis $e_i$ are self-dual. The Weyl group $W_n\subset \GL_n(\C)$ of $\g$ is generated by
all the permutation matrices and the diagonal matrices of order
$2$. The identification $\fh \simeq \fh^*$ is $W$-equivariant. The Langlands dual Lie algebra $\ckfg = \sp(2n)$ of $\g$ has the universal Cartan subalgebra $\fh^*$, which is identified with $\fh \simeq \C^n$ as above, with the dual root system
\begin{equation}\label{eq:roots_C}
	\check{\Phi}:=\{\pm e_i \pm e_j\mid 1\leq i<j\leq n\}\sqcup \{ \pm 2e_i\mid 1\leq i\leq n\}\subset \fh^*.
\end{equation}
and the dual positive root system
\begin{equation}\label{eq:posroots_C}
	\check{\Phi}^+:=\{e_i\pm e_j\mid 1\leq i<j\leq n\}\sqcup \{ 2e_i\mid 1\leq i\leq n\}\subset \ckfh^* \simeq \fh.
\end{equation}
The Weyl group is also $W_n$. We identify $\so(2n)$ as a maximal pseudo-levi subalgebra $\g'$ of $\so(2n+1)$, so that $\fh$ can also be identified as the universal Cartan subalgebra of $\so(2n)$ with root systems 
\begin{equation}\label{eq:roots_D}
	\Phi' = \{\pm e_i\pm e_j\mid 1\leq i<j\leq n\}
\end{equation} 
and the positive root system
\begin{equation}\label{eq:posroots_D}
	\Phi'^+ = \{e_i\pm e_j\mid 1\leq i<j\leq n\}
\end{equation} 
The isomorphism $\fh \simeq \fh^*$ identifies $\Phi'$ (resp. $\Phi'^+$) with its dual (resp. positive) root system $\check{\Phi}'$ (resp. $\check{\Phi}'^+$), both are regarded as root subsystems of $\Phi$ and $\Phi^\vee$. We have inclusions of root systems $\Phi' \subset \Phi$, $\Phi'^+ \subset \Phi^+$, etc. Let $W'_n$ be the Weyl group of $\Phi'$ of type $D_n$, identified as a normal subroup of $W_n$ in the standard way. Then $W_n$ acts on $W_n'$ by conjugation and hence on $\Irr W_n'$.

We have the following alternative description of the bijections in \eqref{eq:bijection_f} in terms of (metaplectic) special representations of Weyl groups and Springer correspondence. We refer the reader to \cite{BMSZ:metaplecticBV} for the definition of metaplectic special representations of $W_n$, denoted as $\Irr^{ms}(W_n)$.

\begin{proposition} \label{prop:f_BC&f_DC}
	We have the following commutative diagrams:
	\begin{equation}\label{diag:f_CB}
		\begin{tikzcd}
			\parti_C^{sp}(2n) \ar[d,"\spr^{-1}"] \ar[r,"f_{CB}"] & \parti^{sp}_B (2n+1) \ar[d,"\spr^{-1}"]{}\\
			\Irr^{sp}(W_n) \ar[equal]{r} & \Irr^{sp}(W_n)
		\end{tikzcd}
	\end{equation}
	and 
	\begin{equation}\label{diag:f_DC}
		\begin{tikzcd}
			\parti_D^{sp}(2n) \ar[d,"\spr^{-1}"] \ar[r,"f_{DC}"] & \parti^{ms}_C (2n) \ar[d,"\spr^{-1}"]\\
			\Irr^{sp}(W'_n)/W_n \ar[r,"j_{W'_n}^{W_n}"]& \Irr^{ms}(W_n)
		\end{tikzcd}
	\end{equation}
	where the left vertical bijection in \eqref{diag:f_DC} is the map induced by the Springer correspondence of $\so(2n)$ and all the other vertical bijections are the usual Springer correspondences. The two top bijections are order-preserving. 
\end{proposition}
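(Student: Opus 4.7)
The plan is to verify both diagrams combinatorially by passing to Lusztig's symbol model of the Springer correspondence for classical Weyl groups. Recall that $\Irr(W_n)$ is parametrized by symbols of a fixed defect (defect one for type $B$, defect zero for type $C$), $\Irr(W'_n)$ is parametrized by unordered symbols of defect zero for type $D$, and in each case the special representations correspond to Lusztig's special symbols. Under these parametrizations, each special nilpotent orbit of $\so(2n+1)$, $\sp(2n)$ or $\so(2n)$ is attached to an explicit special symbol obtained from its partition by a standard combinatorial recipe. This dictionary reduces both commutativity statements to explicit partition/symbol manipulations.

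For diagram \eqref{diag:f_CB}, I would compute the special symbols attached to $\bfd \in \parti_C^{sp}(2n)$ and to $f_{CB}(\bfd) = (\bfd^+)_B \in \parti^{sp}_B(2n+1)$, and verify directly that they yield the same element of $\Irr^{sp}(W_n)$. This is essentially the content of the classical Springer duality between special orbits of types $B$ and $C$ due to Spaltenstein \cite{Spaltenstein} and Kraft--Procesi \cite[Proposition 4.3]{Kraft-Procesi:special}, so the argument amounts to matching our conventions with theirs. Order-preservation is inherited from the fact that $\bfd \mapsto \bfd^+$ and the $B$-collapse $(\cdot)_B$ are both order-preserving.

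For diagram \eqref{diag:f_DC}, I would use the main results of \cite{BMSZ:metaplecticBV}, where it is shown that $j$-induction $j_{W'_n}^{W_n}$ restricts to a well-defined bijection $\Irr^{sp}(W'_n)/W_n \xrightarrow{\sim} \Irr^{ms}(W_n)$, the $W_n$-action on $\Irr(W'_n)$ swapping the two special representations attached to each very even $\SO(2n)$-orbit. Combined with the symbol-level description of the Springer correspondence for $\so(2n)$, the commutativity of \eqref{diag:f_DC} reduces to checking that the partition recipe $\bfd \mapsto ((\bfd^+)^-)_C$ realizes, at the level of special symbols, the image of $j$-induction from defect-zero type $D$ symbols to defect-zero type $C$ symbols. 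This combinatorial check is essentially already carried out in \cite[Section 2.2]{JLS:Duality} and \cite{BMSZ:metaplecticBV}; order-preservation then follows from the order-preservation of the constituent operations (the shifts $\bfd \mapsto \bfd^\pm$, the $C$-collapse, and $j$-induction).

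The main obstacle is the bookkeeping in type $D$: one has to carefully track the identification of very even partitions with pairs of $\SO(2n)$-orbits, the resulting $W_n$-action on $\Irr(W'_n)$, and how this interacts with the symbol formalism so that the partition recipe $f_{DC}$ lands consistently in the single equivalence class on each side. Once these identifications are fixed, both commutativities and the order-preservation assertions reduce to explicit finite combinatorial verifications that do not require any new conceptual input beyond \cite{Spaltenstein, Kraft-Procesi:special, BMSZ:metaplecticBV}.
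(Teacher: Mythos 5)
Your proposal is correct and takes essentially the same route as the paper's proof, which simply cites \cite[Chapitre III]{Spaltenstein} and \cite[Proposition 4.3]{Kraft-Procesi:special} for \eqref{diag:f_CB} (with the Springer compatibility verified via \cite{Carter}'s symbols) and Proposition 6.4 and Corollary 6.5 of \cite{BMSZ:metaplecticBV} for \eqref{diag:f_DC}. You spell out the symbol-level mechanism and the type-$D$ bookkeeping more explicitly than the paper does, but the references invoked and the overall strategy coincide.
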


\begin{proof}
	\eqref{diag:f_CB} can be found in \cite[Chapitre III]{Spaltenstein}
	%\todo{S.Y.: Give a more precise reference.}
	and \cite[Proposition 4.3]{Kraft-Procesi:special}. The compatibility of the Springer correspondence follows from easy computations using \cite{Carter}. \eqref{diag:f_DC} is Proposition 6.4 and Corollary 6.5 of \cite{BMSZ:metaplecticBV}.
\end{proof}

\begin{proposition}\label{prop:bijection_richardson}
	With the notations above, the bijections in \eqref{eq:bijection_f} commutes with taking Richardson orbits. More precisely:
	\begin{enumerate}
		\item 
			Let $\fm$ be a (standard) levi subalgebra in $\g = \so(2n+1)$ and $\ckfm$ its dual levi subalgebra in $\ckfg = \sp(2n)$. Then
			\begin{equation}  \label{eq:Ind_BC}
				\Ind^{\ckfg}_{\ckfm} \bfzero
			= f_{BC} \left( \Ind^{\fg}_{\fm} \bfzero \right). 
			\end{equation}
	 	\item 
	 	 	Let $\fm$ be a (standard) levi subalgebra in $\g=\so(2n+1)$ which is also contained in $\g'=\so(2n)$, and $\ckfm$ its dual levi subalgebra in $\ckfg=\sp(2n)$. Then
	 	 	\begin{equation}  \label{eq:Ind_DC}
	 	 		\Ind^{\ckfg}_{\ckfm} \bfzero
	 	 	= f_{DC} \left( \Ind^{\fg}_{\fm} \bfzero \right). 
	 	 \end{equation}
	\end{enumerate}
\end{proposition}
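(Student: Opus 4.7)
The plan is to reduce both identities to the Springer-correspondence side via \Cref{lem:j-ind_Richardson}, and then invoke the commutative diagrams of \Cref{prop:f_BC&f_DC}.

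For part (1), I would exploit the fact that dual Levi subalgebras share the same abstract Weyl group: $W_{\fm} = W_{\ckfm}$ as parabolic subgroups of $W_n$. Applying \Cref{lem:j-ind_Richardson} to $\so(2n+1)$ and $\sp(2n)$ respectively gives
\begin{align*}
\Ind^{\so(2n+1)}_\fm \bfzero &= \spr_{\so(2n+1)}\bigl(j^{W_n}_{W_\fm} \sgn\bigr), \\
\Ind^{\sp(2n)}_{\ckfm} \bfzero &= \spr_{\sp(2n)}\bigl(j^{W_n}_{W_\fm} \sgn\bigr),
\end{align*}
so both Richardson orbits are the images of the same (automatically special) $W_n$-representation $j^{W_n}_{W_\fm} \sgn$ under the type-$B$ and type-$C$ Springer correspondences. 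Diagram \eqref{diag:f_CB} of \Cref{prop:f_BC&f_DC} identifies these two images via $f_{CB}$, and inverting $f_{CB}$ yields \eqref{eq:Ind_BC}.

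For part (2), I interpret $\Ind^\fg_\fm \bfzero$ in \eqref{eq:Ind_DC} as the Richardson orbit $\Ind^{\g'}_\fm \bfzero$ in $\g' = \so(2n)$, which is the natural domain for $f_{DC}$ (since $\fm$ is a Levi of $\g'$ as well); in general this is a genuinely different partition from $\Ind^{\so(2n+1)}_\fm \bfzero$, not related merely by erasing a trailing~$1$. Since $W_\fm \subset W'_n \subset W_n$, transitivity of $j$-induction yields
\[
j^{W_n}_{W_\fm}\sgn \;=\; j^{W_n}_{W'_n}\bigl(j^{W'_n}_{W_\fm}\sgn\bigr),
\]
and by \Cref{lem:j-ind_Richardson} the intermediate representation $j^{W'_n}_{W_\fm}\sgn$ is the Springer representation of $\Ind^{\so(2n)}_\fm \bfzero$. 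Combining this factorization with the $\sp(2n)$ identity $\Ind^{\sp(2n)}_{\ckfm}\bfzero = \spr_{\sp(2n)}(j^{W_n}_{W_\fm}\sgn)$ from part (1) and diagram \eqref{diag:f_DC} --- which identifies the composition $\spr_{\sp(2n)} \circ j^{W_n}_{W'_n} \circ \spr_{\so(2n)}^{-1}$ with $f_{DC}$ on special representations --- produces \eqref{eq:Ind_DC}.

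The main technical point is to ensure that $j^{W_n}_{W_\fm}\sgn$ and $j^{W'_n}_{W_\fm}\sgn$ land in the special representations, so that the Springer bijections of \Cref{prop:f_BC&f_DC} actually apply to them. This is a standard consequence of Lusztig--Spaltenstein: $j$-inducing $\sgn$ from any parabolic subgroup produces the Springer representation (with trivial local system) of a Richardson orbit, and such representations are always special. The $W_n$-orbit ambiguity on the type-$D$ side that arises when $\Ind^{\so(2n)}_\fm \bfzero$ is a very even orbit is harmless by the convention in \Cref{partition_notation} of not distinguishing the two very even $\SO(N)$-orbits.
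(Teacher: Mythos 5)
Your proof is correct and takes essentially the same approach as the paper: identify the Springer representation of each Richardson orbit as a $j$-induction of $\sgn$ and then invoke the commutative diagrams of \Cref{prop:f_BC&f_DC}. For part (2), you spell out the transitivity step $j^{W_n}_{W_\fm}\sgn = j^{W_n}_{W'_n}(j^{W'_n}_{W_\fm}\sgn)$ and the interpretation of $\Ind^{\fg}_{\fm}\bfzero$ as a Richardson orbit in $\g'=\so(2n)$, details which the paper leaves implicit under ``similar argument''.
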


\begin{remark}
	Part (1) of Proposition \ref{prop:bijection_richardson} has also appeared in \cite[Theorem 1.3, Proposition 3.1]{Fu-Ruan-Wen}. Here we will give a more direct proof using $j$-induction, which also works for Part (2). On the other hand, \cite[Theorem 1.3]{Fu-Ruan-Wen} also discusses the duality between the coverings of the Richardson orbits in question induced by the generalized Springer maps. It is natural to expect the analogue also holds for Part (2), which we will not explore here.
\end{remark}

\begin{proof}
	Let $W_0 \subset W_n$ be the Weyl group of $\fm$, which is also the Weyl group of $\ckfm$. The Springer representation associated to the Richardson orbit $\Ind^{\fg}_{\fm} \bfzero$ in $\g^* = \so(2n+1)^*$ is the $W_n$ representation $j_{W_0}^{W_n} \sgn$. 
	
	Similarly, The Springer representation associated to the Richardson orbit $\Ind^{\ckfg}_{\ckfm} \bfzero$ in $\ckfg^* = \sp(2n)^*$ is also the $W_n$ representation $j_{W_0}^{W_n} \sgn$. Then (1) follows from Proposition \ref{prop:f_BC&f_DC}, (1).
	
	Part (2) follows from a similar argument to the above and Proposition \ref{prop:f_BC&f_DC}, (2).
	
\end{proof}

\begin{corollary} \label{cor:richardson}
	With the notations in \Cref{prop:bijection_richardson}, let $\lambda \in \fh$ and $\check{\lambda} \in \fh^*=\ckfh$ be the image of $\lambda$ under the isomorphism $\fh \simeq \fh^*$. Let $\fm=\g_{\lambda}$. Then its Langlands dual levi subalgebra in $\ckfg$ is $\ckfm = \ckfg_{\check{\lambda}}$. Then we have \eqref{eq:Ind_BC}. If in addition $\g_{\lambda} \subset \g'$, then we also have \eqref{eq:Ind_DC}.
\end{corollary}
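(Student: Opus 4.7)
My plan is to derive the corollary directly from \Cref{prop:bijection_richardson} once the Langlands duality of the pair $(\g_\lambda,\ckfg_{\check\lambda})$ has been verified by inspecting the root systems in \eqref{eq:roots_B} and \eqref{eq:roots_C}. The only technical point is that \Cref{prop:bijection_richardson} is phrased for standard Levis, which we will handle via a Weyl-group conjugation.

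First I would write $\lambda=(\lambda_1,\ldots,\lambda_n)$ in the self-dual basis $e_1,\ldots,e_n$ of $\fh\simeq\fh^*$. A root vector $x_\alpha$ lies in $\g_\lambda$ iff $\alpha(\lambda)=0$, so reading off \eqref{eq:roots_B},
\[
R(\g_\lambda)=\{\pm(e_i-e_j):\lambda_i=\lambda_j,\,i\neq j\}\cup\{\pm(e_i+e_j):\lambda_i=-\lambda_j,\,i\neq j\}\cup\{\pm e_i:\lambda_i=0\},
\]
and the analogous reading of \eqref{eq:roots_C} produces the same description for $R(\ckfg_{\check\lambda})$ except with $\pm 2e_i$ in place of $\pm e_i$. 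In $B_n$ the coroots of the long roots $\pm(e_i\pm e_j)$ are those same roots (now viewed in $C_n$), while the coroot of the short root $\pm e_i$ is the long root $\pm 2e_i$ in $C_n$. Hence $R(\g_\lambda)$ and $R(\ckfg_{\check\lambda})$ form a pair of Langlands dual root systems sharing the common Weyl group $W_\lambda\subset W_n$, so $\ckfm=\ckfg_{\check\lambda}$ is precisely the Langlands dual Levi of $\fm=\g_\lambda$. This establishes the first assertion.

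For \eqref{eq:Ind_BC}, pick $w\in W_n$ so that $w\lambda$ is $B_n$-dominant; then $\g_{w\lambda}$ is a standard Levi of $\g$ conjugate to $\g_\lambda$. Since the Richardson orbit $\Ind^{\g}_{\fm}\bfzero$ (and likewise its dual) depends only on the $G$-conjugacy class of $\fm$, applying \Cref{prop:bijection_richardson}(1) to $\g_{w\lambda}$ gives \eqref{eq:Ind_BC} for $\g_\lambda$. For \eqref{eq:Ind_DC}, the assumption $\g_\lambda\subset\g'=\so(2n)$ means no short root $\pm e_i$ lies in $R(\g_\lambda)$, i.e.\ $\lambda_i\neq 0$ for every $i$. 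One may then arrange $(w\lambda)_n>0$; the simple roots of $\g_{w\lambda}$ are then all of the form $e_i-e_{i+1}$, making $\g_{w\lambda}$ a standard Levi of both $\g$ and $\g'$. Applying \Cref{prop:bijection_richardson}(2) to $\g_{w\lambda}$ now yields \eqref{eq:Ind_DC}.

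The only mildly delicate point in the whole argument is this standardness bookkeeping under Weyl conjugation, which is innocuous because Richardson induction only sees the conjugacy class of the inducing Levi. Apart from that, the corollary is essentially a translation between centralizers of semisimple elements and their root systems.
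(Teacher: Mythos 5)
Your proof is correct and takes essentially the same route as the paper, which states the corollary without a proof environment, regarding it as immediate from \Cref{prop:bijection_richardson}. Your argument supplies the details the paper suppresses: the explicit verification from \eqref{eq:roots_B} and \eqref{eq:roots_C} that $\g_\lambda$ and $\ckfg_{\check\lambda}$ are Langlands-dual Levis, and the Weyl-conjugation step reducing the centralizer $\g_\lambda$ to a standard Levi (using the $W$-equivariance of $\fh\simeq\fh^*$ and conjugation-invariance of Richardson induction) so that \Cref{prop:bijection_richardson} applies directly.
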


\subsection{\texorpdfstring{$q$-unipotent ideals}{q-unipotent ideals}}

We first recall the notion the \emph{$q$-unipotent} infinitesimal characters from \cite[\S\,4]{McGovern1994}.

\begin{definition}[{\cite[Definition 4.11]{McGovern1994}}] \label{defn:q-unipotent}
	Let $\g$ be a simple Lie algebra of classical type and $\ckfg$ the Langlands dual of $\g$. Let $\check{V}$ be the standard representation of $\ckfg$, then we have natural inclusion $\ckfg \hookrightarrow \sl(\check{V})$. When $\ckfg$ is of type $C_n$, we consider instead the composition of inclusions $\ckfg=\sp(2n) \hookrightarrow \sl(\check{V}) = \sl(2n) \hookrightarrow \sl(2n+1)$, where $\sl(2n) \hookrightarrow \sl(2n+1)$ is the standard embedding as block diagonal matrices (with $0$ in, say, the lower right corner). With this convention, we have fixed $\ckfg \hookrightarrow \sl(N')$ with $N' = N$ (resp. $N' = 2n+1$) when $\ckfg = \sl(N)$ or $\so(N)$ (resp. $\sp(2n)$).

	Let $\check{\Orb}$ be a nilpotent orbit in $\sl(N')$ corresponding to a partition $\bfq$ of $N'$. Let $\lambda_{\check{\Orb}} = \half h_{\check{\Orb}}$ be as in \Cref{subsec:special_unipotent}. Let $\lambda'_{\check{\Orb}}$ be any $\SL(N')$-conjugate of $\lambda_{\check{\Orb}}$ that lies in a fixed Cartan subalgebra $\ckfh$ of $\ckfg$. Then we can regard $\lambda'_{\check{\Orb}}$ as an infinitesimal character for $\fg$, which is called a \emph{$q$-unipotent} infinitesimal character for $\g$. The maximal primitive ideal $J_{max}(\lambda'_{\check{\Orb}})$ is called a \emph{$q$-unipotent ideal} of $\Ug$. 
\end{definition}

\begin{theorem}[{\cite[Theorem 4.10]{McGovern1994}}] \label{thm:defn_q-unipotent}
	The map $\check{\Orb} \mapsto \lambda'_{\check{\Orb}}$ from the set of nilpotent orbits in $\sl(N')$ to the set $\ckfh/W$ of infinitesimal characters for $\g$ is well-defined in all cases, except up to an outer automorphism in type $D$. More precisely, if $\fg$ is not of type $D$, or if $\fg$ is of type $D$ and the partition $\bfq$ of $\check{\Orb}$ has at least one odd term, then any choices of $\lambda'_{\check{\Orb}}$ are conjugate under the action of the Weyl group $W$. If $\fg$ is of type $D$ and the partition $\bfq$ of $\check{\Orb}$ has only even terms, then there are two choices of $\lambda'_{\check{\Orb}}$ up to $W$-conjugacy, each differing from each other by an outer automorphism of $\ckfg$.
\end{theorem}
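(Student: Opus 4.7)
The plan is to compute explicitly the multiset of diagonal entries of a Jacobson--Morozov element $h_{\check\Orb}$ in $\sl(N')$, observe that it is symmetric about $0$, and then examine in each type how this multiset fits into the standard diagonal realization of $\ckfh$ inside $\sl(N')$, counting the $W$-orbits of such placements.

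First, I would invoke the standard description: for a partition $\bfq = [q_1, \ldots, q_k]$ of $N'$, a Jacobson--Morozov $\sl_2$-triple $(e,h,f)$ in $\sl(N')$ realizing the orbit $\check\Orb_{\bfq}$ has $h$ diagonalizable with multiset of eigenvalues
\[
  E(\bfq) := \bigsqcup_{i=1}^{k}\{\, q_i - 1,\; q_i - 3,\; \ldots,\; -(q_i-1)\,\}.
\]
Therefore $\lambda_{\check\Orb} = \tfrac{1}{2} h_{\check\Orb}$ is $\SL(N')$-conjugate to the diagonal matrix with entries $\tfrac{1}{2} E(\bfq)$. The two key observations are that this multiset is symmetric about $0$, and that the multiplicity of $0$ equals the number of odd parts of $\bfq$. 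It remains to check for each type that $\tfrac{1}{2} E(\bfq)$ admits a placement inside $\ckfh$ and to count the $W$-orbits of such placements.

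The cases of types $A$, $B$, and $C$ are routine. For $\ckfg = \sl(N)$ one has $\ckfh$ equal to all traceless diagonal matrices and $W = S_N$ permutes coordinates arbitrarily, so there is a single orbit of placements. For $\ckfg = \sp(2n)$ (dual to $\fg$ of type $B$) or $\so(2n+1)$ (dual to $\fg$ of type $C$), the Cartan $\ckfh$ is realized inside $\sl(2n+1)$ as $\diag(a_1,\ldots,a_n,-a_1,\ldots,-a_n,0)$. Since $N' = 2n+1$ is odd, $\bfq$ has at least one odd part, producing a $0$ entry of $\lambda_{\check\Orb}$ that is placed in the last slot; the remaining entries split into symmetric pairs $(a_i, -a_i)$ by symmetry of $E(\bfq)$. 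Since $W = W_n$ acts on $(a_1, \ldots, a_n)$ by all permutations \emph{and} all sign changes, the $W$-orbit is determined by the multiset $\{|a_1|, \ldots, |a_n|\}$, which is intrinsic to $\bfq$.

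The subtle case is $\ckfg = \so(2n)$ (type $D$), where $\ckfh = \{\diag(a_1,\ldots,a_n,-a_1,\ldots,-a_n)\}$ and $W = W'_n$ consists of permutations together with only \emph{even} sign changes. If $\bfq$ has at least one odd part, then the parity of $|\bfq| = 2n$ forces the number of odd parts to be at least two, giving at least two $0$'s in $E(\bfq)$; using a zero coordinate as a ``free'' sign flip, any single sign change on a nonzero coordinate can be paired with the trivial flip of a zero coordinate to yield an even sign change in $W'_n$, so all placements are $W$-conjugate. If all parts of $\bfq$ are even, then $E(\bfq)$ has no zeros, and two placements differing by a single sign flip on one coordinate give two distinct $W'_n$-orbits; these two orbits are interchanged by the outer automorphism of $\so(2n)$, which acts on $\ckfh$ precisely by a single sign change. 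The main obstacle is this last identification---pairing the ambiguity in type $D$ with the outer automorphism---but it follows from the standard description of $\mathrm{Out}(\so(2n))$ as the diagram automorphism swapping the two extremal nodes of the $D_n$ Dynkin diagram.
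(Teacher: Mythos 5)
The paper does not give its own proof of this statement: it is quoted verbatim as \cite[Theorem 4.10]{McGovern1994}, so there is no internal proof for direct comparison. Your direct combinatorial verification is nevertheless correct and complete. The key points all check out: the multiset $\tfrac12 E(\bfq)$ is symmetric about $0$ and has $0$ with multiplicity equal to the number of odd parts of $\bfq$; in types $A$, $B$, $C$ the full hyperoctahedral (or symmetric) group action makes the placement into $\ckfh$ unique up to $W$; in type $D$ you correctly identify the invariant that separates $W'_n$-orbits, namely the sign of $\prod_i a_i$, which is well-defined exactly when every $a_i$ is nonzero (equivalently, $\bfq$ has no odd parts, so $E(\bfq)$ consists of odd integers only). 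Your parity observation that $|\bfq|=2n$ forces an even number of odd parts, hence at least two zeros whenever there is one, is exactly the point that lets a ``free'' sign flip absorb the odd-sign-change ambiguity in that subcase. Finally, identifying the two remaining orbits in the all-even case with a single coordinate sign change, and noting that the nontrivial diagram automorphism of $D_n$ acts on $\ckfh$ precisely as such a flip, closes the argument. One minor point you might spell out: you implicitly use that, once the multiset $\{|a_1|,\dots,|a_n|\}$ is fixed, a $W_n$-orbit of placements breaks into at most two $W'_n$-orbits because $[W_n:W'_n]=2$; combined with the sign-product invariant being constant on $W'_n$-orbits but taking both values in the all-even case, this gives \emph{exactly} two. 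That makes the count airtight rather than merely ``at least two.''
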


\begin{remark}
	We will see in the last part of the proof of \Cref{thm:q-unipotent_weaklyunip} that why it is natural to consider the exceptional inclusion $\sp(2n) \hookrightarrow \sl(2n+1)$ in \Cref{defn:q-unipotent}.
\end{remark}

\begin{remark}
	Note that when $\g$ is of type $A$, the set of $q$-unipotent infinitesimal characters coincide with the special unipotent infinitesimal characters defined in \Cref{subsec:special_unipotent}. Therefore we will assume that $\g$ is not of type $A$ for the rest of this subsection.
\end{remark}

\begin{comment}
\begin{remark}
	One can see that the unipotent infinitesimal characters coming from Achar duality are special cases of McGovern's $q$-unipotent infinitesimal characters.
\end{remark}
\end{comment}

\begin{definition}[{\cite[Definition 8.2.1]{LMBM}}] \label{defn:rho+}
	Suppose $\bfq = [q_1, q_2, \ldots, q_l]$ is a partition of $N$. Define $\rho^+(\bfq) \in \left(\frac{1}{2} \Z \right)^{\lfloor \frac{N}{2} \rfloor}$ by appending the \emph{positive} elements of the sequence
	\[ \left(  \frac{q_i - 1}{2}, \frac{q_i - 3}{2}, \ldots, \frac{3 - q_i}{2}, \frac{1 - q_i}{2}  \right) \]
	for each $i \geqslant 1$, and then adding $0$'s if necessary so that the length of the sequence $\rho^+(\bfq)$ equals $\lfloor \frac{N}{2} \rfloor$.
\end{definition}

For classical $\fg$ not of type $A$, one can express the infinitesimal character $\lambda'_{\check{\Orb}}$ attached to orbit $\check{\Orb}$ with partition $\bfq$ in terms of the standard coordinates as in \Cref{subsec:special_partitions}. When $\fg$ is not of type $D$, or when $\fg$ is of type $D$ and the partition $\bfq$ has at least one odd term, $\lambda'_{\check{\Orb}}$ is given by $\rho^+(\bfq)$ up to $W$-conjugacy. Note that in this case, $\rho^+(\bfq)$ always has at least one zero coordinate. When $\fg$ is of type $D$ and the partition $\bfq$ has only even terms, then the two choices of $\lambda'_{\check{\Orb}}$ up to $W$-conjugacy are given by $\rho^+(\bfq)$, whose coordinates are all non-zero, and the other choice given by multiplying the last coordinate of $\rho^+(\bfq)$ by $-1$.

\begin{remark}\label{rem:McGovern}
	In \cite[Definition 5.5]{McGovern1994}, McGovern defined the notion of \emph{parabolically weak unipotence} of a (Harish-Chandra) module of $\Ug$. A parabolically weakly unipotent module is in particular weakly unipotent with respect to the root lattice in the sense of Definition \ref{defn:weakly_unipotent}. McGovern claimed in \cite[Theorem 5.6]{McGovern1994} that for any $q$-unipotent infinitesimal character $\lambda \in Q$, he could prove that the associated $q$-unipotent ideal $J_{max}(\lambda)$ is parabolically weakly unipotent by checking the following sufficient (but not necessary) condition: whenever $\g$ is isomorphic
	to the derived algebra $[\fl, \fl]$ of a Levi factor of a larger semisimple algebra $\g'$, any $\lambda'$ congruent to $\lambda$ modulo the root lattice of $\g'$ satisfies one of the following conditions:
	\begin{enumerate}
		\item the associated variety $\cV(J_{max}(\lambda'))$ of $J_{max}(\lambda')$ is not contained in $\cV(J_{max}(\lambda))$\footnote{In the original proof of \cite[Theorem 5.6]{McGovern1994}, this condition was mistakenly stated as ``the associated variety $\cV(J_{max}(\lambda'))$ of $J_{max}(\lambda')$ differs from $\cV(J_{max}(\lambda))$".}, or
		
		\item $\lVert \lambda' \rVert \geq \lVert \lambda \rVert$.
	\end{enumerate}
	This is exactly how Barbasch and Vogan proved in \cite[Proposition 5.10]{BV85:unipotent} that the special unipotent ideal of $\Ug$ attached to an even nilpotent orbit $\check{\Orb}$ in $\check{\g}^*$ is weakly unipotent (with respect to the weight lattice). In \Cref{ex:counterexample1} below, however, we will see that there exist $\lambda, \lambda' \in Q$ that are congruent modulo the root lattice of $\g$ itself, such that neither of the two conditions above is satisfied. Therefore one cannot prove parabolically weak unipotence, or even weak unipotence, of $q$-unipotent ideals via the approach proposed by McGovern above.
	
	%Also we notice that, right below \cite[Theorem 5.6]{McGovern1994}, McGovern gave examples of $q$-unipotent ideals that are not weakly unipotent. This is wrong. In Theorem \ref{thm:q-unipotent_weaklyunip}, we will show that all $q$-unipotent ideals are midly and hence weakly unipotent with respect to the weight lattice.
\end{remark}

\begin{example}\label{ex:counterexample1}
	Let $\ckfg = \so(20)$ and set 
	  \[ \bfp_1 = [9,1] \in \parti_{D}^{sp}(10), \quad \bfp_2 = [5,5] \in \parti_{D}^{sp}(10).\] 
	Let 
		\[\bfp'_1 = f_{DC}(\bfp_1) = [10], \quad \bfp'_2 = f_{DC}(\bfp_2) = [6, 4], \] 
	both of which belong to $\parti_{C}^{ms}(10)$ (cf. the proof of \Cref{thm:q-unipotent_weaklyunip}). 
	
	Let $\Orb_1 := \Orb_{\bfp'_1 \cup \bfp_2}$, $\Orb_2: = \Orb_{\bfp'_2 \cup \bfp_1}$ be nilpotent orbits in $\sl(20)$. We take 
	\[ \lambda_1 = \lambda_{\Orb_1} = \tfrac{1}{2} h_{\Orb_1} = \rho^+ (\bfp'_1 \cup \bfp_2) = \left( \frac{9}{2}, \frac{7}{2}, \frac{5}{2}, \frac{3}{2}, \frac{1}{2}, 2, 1, 2, 1, 0 \right)\]
	and
	\[ \lambda_2 = \lambda_{\Orb_2} = \frac{1}{2} h_{\Orb_2} = \rho^+ (\bfp'_2 \cup \bfp_1) = \left( \frac{5}{2}, \frac{3}{2}, \frac{1}{2}, \frac{3}{2}, \frac{1}{2}, 4, 3, 2, 1, 0 \right).\]
	%Here the equalities are taken up to conjugation by the Weyl group $W'_{10}$ of $\so(20)$. 
	It is clear that $\lambda_1, \lambda_2 \in Q$.
	Then $\lambda_1$ and $\lambda_2$ differ by an element in the coroot lattice of root lattice of $\g = \so(20)$ (since the difference of the sums of coordinates of $\lambda_1$ and $\lambda_2$ is even) and $\lVert \lambda_1 \rVert > \lVert \lambda_2 \rVert$. However, one can compute that $J_{max}(\lambda_1)$ and $J_{max}(\lambda_2)$ have the same associated variety in $\g^* = \so(20)^*$, which is the closure of the orbit with partition $[3^5, 1^5]$ (e.g., by \cite{BMW:annihilator_var}). Therefore both conditions in Remark \ref{rem:McGovern} fail.

	Here is a natural reason for $\cV(J_{max}(\lambda_1)) = \cV(J_{max}(\lambda_2))$: The pseudo-levi subalgebra of $\g$ corresponding to the integral root system of $\lambda_1$ is the same as that of $\lambda_2$, which is $\so(10) \times \so(10)$. Let $\Orb_i$ be the nilpotent orbit in $\g=\so(20)$ such that $\overline{\Orb}_i = \cV(J_{max}(\lambda_i))$, $i=1,2$. 
	Then the Springer representations corresponding to $\Orb_1$ and $\Orb_2$ are of the form 
		\[ j_{W'_{5} \times W'_{5}}^{W'_{10}} \sigma_1 \otimes \sigma_2 \quad \text{and} \quad j_{W'_{5} \times W'_{5}}^{W'_{10}} \sigma_2 \otimes \sigma_1, \]
	respectively, where $\sigma_i$ are special representations of $W'_{5}$, the Weyl group of $\so(10)$, $i=1,2$. It is easy to see that these two representations are isomorphic, which implies $\Orb_1 = \Orb_2$.
\end{example}

\begin{comment}
\begin{example}\label{ex:counterexample2}
	Let $\ckfg = \so(24)$ and set 
	  \[ \bfp_1 = [11,1] \in \parti_{D}^{sp}(12), \quad \bfp_2 = [7,5] \in \parti_{D}^{sp}(12).\] 
	Let 
		\[\bfp'_1 = f_{DC}(\bfp_1) = [12], \quad \bfp'_2 = f_{DC}(\bfp_2) = [8, 4], \] 
	both of which belong to $\parti_{C}^{ms}(12)$ (cf. the proof of \Cref{thm:q-unipotent_weaklyunip}). 
	
	Let $\Orb_1 := \Orb_{\bfp'_1 \cup \bfp_2}$, $\Orb_2: = \Orb_{\bfp'_2 \cup \bfp_1}$ be nilpotent orbits in $\sl(20)$. We take 
	\[ \lambda_1 = \lambda_{\Orb_1} = \tfrac{1}{2} h_{\Orb_1} = \rho^+ (\bfp'_1 \cup \bfp_2) = \left( \frac{11}{2}, \frac{9}{2}, \frac{7}{2}, \frac{5}{2}, \frac{3}{2}, \frac{1}{2}, 3, 2, 1, 2, 1, 0 \right)\]
	and
	\[ \lambda_2 = \lambda_{\Orb_2} = \frac{1}{2} h_{\Orb_2} = \rho^+ (\bfp'_2 \cup \bfp_1) = \left( \frac{7}{2}, \frac{5}{2}, \frac{3}{2}, \frac{1}{2}, \frac{3}{2}, \frac{1}{2}, 5, 4, 3, 2, 1, 0 \right).\]
	%Here the equalities are taken up to conjugation by the Weyl group $W=W'_{12}$ of $\so(24)$. 
	It is clear that $\lambda_1, \lambda_2 \in Q$.
	Then $\lambda_1$ and $\lambda_2$ differ by an element in the root lattice of $\g = \so(24)$ and $\lVert \lambda_1 \rVert > \lVert \lambda_2 \rVert$. However, one can compute that $J_{max}(\lambda_1)$ and $J_{max}(\lambda_2)$ have the same associated variety in $\g^* = \so(24)^*$, which is the closure of the birationally rigid orbit with partition $[3^5, 2^2, 1^5]$.
\end{example}
\end{comment}

We provide a correct proof of weak unipotence of $q$-unipotent ideals below.

\begin{theorem}\label{thm:q-unipotent_weaklyunip}
	All $q$-unipotent ideals are mildly and hence weakly unipotent with respect to the root lattice $Q_\g$ of $\g$.
\end{theorem}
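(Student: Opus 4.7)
The plan is to deduce mild unipotence via \Cref{cor:closure} by splitting the infinitesimal character $\lambda = \rho^+(\bfq)$ according to the parity of the parts of $\bfq$, and then applying \Cref{prop:norm_comparison} on each half. Since the type $A$ case is covered by \Cref{thm:special_unipotent}, I focus on $\g$ of type $B$, $C$, or $D$.

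First, write $\bfq = \bfq^{\mathrm{odd}} \sqcup \bfq^{\mathrm{even}}$, and decompose $\lambda = (\lambda^I, \lambda^J)$ with $\lambda^I = \rho^+(\bfq^{\mathrm{odd}})\in \Z^a$ and $\lambda^J = \rho^+(\bfq^{\mathrm{even}})\in (\Z + \tfrac{1}{2})^b$. A direct check of the integrality conditions for the roots of $\g$ shows that the integral root system $R(\Lambda)$ decomposes as an orthogonal sum of sub-root-systems supported on the $I$- and $J$-coordinates respectively; consequently $\ckfg_\Lambda = \ckfg_\Lambda^I \oplus \ckfg_\Lambda^J$ (with each factor a classical Lie algebra), $\ckfg_\lambda = \ckfm^I_\lambda \oplus \ckfm^J_\lambda$, and likewise for $\nu$. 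The hypothesized closure relation in \Cref{cor:closure} therefore splits component-wise, and it suffices to prove $\lVert \nu^I \rVert \geq \lVert \lambda^I\rVert$ and $\lVert \nu^J\rVert \geq \lVert \lambda^J\rVert$ separately.

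The key observation is that $\Orb_{\bfq^{\mathrm{odd}}}$ is an \emph{even} nilpotent orbit of $\so(2a+1)$ (or of $\so(2a)$ when $\g$ is of type $D$) with $\lambda_{\Orb_{\bfq^{\mathrm{odd}}}} = \lambda^I$, and its Jacobson--Morozov parabolic has Levi $\fm^I_\lambda$; hence $\Ind^{\so(2a+1)}_{\fm^I_\lambda}\bfzero = \Orb_{\bfq^{\mathrm{odd}}}$. Symmetrically, $\Orb_{\bfq^{\mathrm{even}}}$ is an even orbit of $\sp(2b)$ with $\lambda_{\Orb_{\bfq^{\mathrm{even}}}} = \lambda^J$ and $\Ind^{\sp(2b)}_{\ckfm^J_\lambda}\bfzero = \Orb_{\bfq^{\mathrm{even}}}$. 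For each of the types $B$, $C$, $D$, the order-preserving bijections $f_{BC}$, $f_{DC}$ of \eqref{eq:bijection_f}, together with \Cref{cor:richardson}, translate the factor-wise closure relations in $\ckfg_\Lambda^I$ and $\ckfg_\Lambda^J$ into equivalent closure relations in $\so(2a+1)$ (or $\so(2a)$) and in $\sp(2b)$ respectively.

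At this stage \Cref{prop:norm_comparison} applies directly. Since $\nu \in \lambda + Q_\g$, the coordinates of $\nu^I$ are integers and those of $\nu^J$ are half-integers; thus $\alpha(\nu^I) \in \Z$ for every root $\alpha$ of $\so(2a+1)$ (or $\so(2a)$), and $\alpha(\nu^J)\in \Z$ for every root of $\sp(2b)$ (the long roots $\pm 2e_i$ paired with a half-integer still give an integer, and sums of half-integers are integers). Applying the proposition yields $\lVert \nu^I\rVert \geq \lVert \lambda^I\rVert$ and $\lVert \nu^J\rVert \geq \lVert \lambda^J\rVert$, and the orthogonal decomposition of the norm gives $\lVert \nu\rVert \geq \lVert \lambda\rVert$, proving mild unipotence.

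The main difficulty I expect is the case-by-case bookkeeping of which of $f_{BC}$ or $f_{DC}$ (or its inverse) must be invoked in each factor of $\ckfg_\Lambda$ for each of the three types, and in particular verifying the hypothesis of \Cref{cor:richardson}(2) that $\ckfm^J_\lambda$ is the dual of a Levi of $\so(2b+1)$ contained in $\so(2b)$; this is automatic because $\lambda^J$ has no zero coordinates. The ambiguity of $\lambda'_{\check{\Orb}}$ in type $D$ when $\bfq$ has only even parts (cf.\ \Cref{thm:defn_q-unipotent}) is benign because the two choices are swapped by an outer automorphism, which preserves both the norm and the closure order.
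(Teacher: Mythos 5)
Your proposal is correct and is essentially the same argument as the paper's proof: split the partition by parity, observe that $R(\Lambda)$ and hence $\ckfg_\Lambda$, $\ckfg_\lambda$, $\ckfg_\nu$ all split into an integer-coordinate factor and a half-integer-coordinate factor, use the Springer/metaplectic dualities $f_{BC}$, $f_{DC}$ together with \Cref{cor:richardson} to move each factor-wise closure relation to a group where \Cref{prop:norm_comparison} applies, and add the resulting norm inequalities. The only cosmetic difference is that you flag both $\Orb_{\bfq^{\mathrm{odd}}}$ and $\Orb_{\bfq^{\mathrm{even}}}$ as even orbits of the \emph{target} classical group before invoking \Cref{prop:norm_comparison}, whereas the paper frames $\bfd_0$ via metaplectic-special partitions; and you should note (as the paper does) that the no-zero-coordinates condition is needed not just for $\lambda^J$ but also for $\nu^J$, which holds because $\nu\in\lambda+Q_\g$ forces $\nu^J$ to have half-integer coordinates too.
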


\begin{proof}
	We treat the cases when $\ckfg$ is of type $B$ or $D$ in detail. The argument for $\ckfg$ of type $C$ is along the same lines and we only mention necessary changes.
	\vskip 0.5 em
	\noindent {\bf Types $B$ and $D$.} Suppose $\ckfg = \mathfrak{so}(N)$, where $N \geqslant 7$. Let $n = \lfloor \frac{N}{2} \rfloor$.
	%and $\delta = 0$ or $1$ depending on whether $\ckfg$ is of type $D$ or $B$.
	Let $\check{\Orb}$ be any orbit in $\sl(N)$ whose corresponding partition is $\bfd$. Take an $\sl_2$-triple of $\check{\Orb} \subset \sl(N)$ with semisimple element $h_{\check{\Orb}} = h_\bfd \in \ckfh$. Let $\bfd_i$, $i = 0, 1$, be the subpartition of $\bfd$ consisting of all members of $\bfd$ that are congruent to $i$. Set $N_i = |\bfd_i|$, $i = 0, 1$. Then $N_0$ is even and $\# \bfd_1 \equiv N_1 \equiv N \bmod 2$.  
	
	The pseudo-levi subalgebra of $\ckfg$ corresponding to the integral root system of $\lambda_{\check{\Orb}} = \frac{1}{2} h_{\check{\Orb}}$ is $\fl = \fl^0 \oplus \fl^1$, where $\fl^0 = \so(N_0)$ and $\fl^1 = \so(N_1)$. The Cartan subalgebra $\ckfh$ of $\ckfg$ decomposes into the direct sum of the Cartan subalgebras $\ckfh_i$ of $\fl^i$, $i=0, 1$.

	Then we can regard $\bfd_1 \in \parti_{0}(N_1)$, which is of type $B$ (resp. $D$) when $\ckfg$ is of type $B$ (resp. $D$). Moreover, since $\bfd_1$ consists of only odd members, it is automatically a special partition and hence belongs to $\parti_{0,\epsilon'}(N_1)$, where $\epsilon' \equiv N_1 \bmod 2$. It determines a nilpotent orbit $\check{\Orb}_{\bfd_1}$ in $\fl^1$. This is ordinary (think about the case when $\bfd_0 = \varnothing$, i.e., $\bfd = \bfd_1$, which corresponds to an even orbit in $\ckfg$). 
	
	On the other hand, $\bfd_0$ consists of only even members and each of its members might not necessarily have even multiplicity in general. Therefore we cannot regard $\bfd_0$ as a partition in $\parti_0(N_0)$. It turns out that the right thing to do here is to regard $\bfd_0$ as a metaplectic special partition in $\parti_{1,1}(N_0) = \parti_C^{ms}(N_0)$. Let $\fm = \sp(N_0)$, which has a Cartan subalgebra identified with the Cartan subalgebra $\ckfh_0$ of $\fl^0 = \so(N_0)$ as mentioned above, so that $\bfd_0$ corresponds to a nilpotent orbit $\check{\Orb}_0 := \check{\Orb}_{\bfd_0}$ in $\fm$.
	
	Let $h_i := h_{\bfd_i}$ be the semisimple element of an $\sl_2$-triple of $\check{\Orb}_{\bfd_i}$, $i=0, 1$, regarded as nilpotent orbit in the Lie algebras as above. Then $\lambda_{\check{\Orb}} = \frac{1}{2} h_\bfd = \frac{1}{2}h_{\check{\Orb}}$ can be regarded as the concatenation of $\lambda_0 = \frac{1}{2}h_0 $ and $\lambda_1 = \frac{1}{2}h_1$ (up to conjugation by the Weyl group $S_{m-1}$ of $\sl(m)$). In terms of the standard coordinates $\{e_i\}$, $\lambda_0 \in (\frac{1}{2} + \Z)^{N_0}$ and $\lambda_1 \in \Z^{N_1}$. 
	
	Now given any $\gamma \in \lambda_{\check{\Orb}} + Q_\g$, the integral root system of $\gamma$ is the same as that of $ \lambda_{\check{\Orb}}$. We write $\gamma = \gamma_0 + \gamma_1$, such that $\gamma_i \in \ckfh_i$ for $i= 0, 1$. Then in terms of the standard coordinates $\{e_i\}$, we also have $\gamma_0 \in (\frac{1}{2} + \Z)^{N_0}$ and $\gamma_1 \in \Z^{N_1}$\footnote{Note that this is still true if we take the weight lattice $\bigoplus_{i=1}^n \Z e_i$ in place of the root lattice $Q_\g$ when $\g$ is of type $C$. However, this is no longer true if we take the weight lattice $\bigoplus_{i=1}^n \Z e_i + \Z (\frac{1}{2}\sum_{i=1}^n e_i) $ when $\g$ is of type $D$.}. Therefore $\lambda_0$ and $\gamma_0$ (resp. $\lambda_1$ and $\gamma_1$) are integral in $\fm$ (resp. $\fl^1$).
	
	Set $\fl_{\gamma} = \mathfrak{z}_{\ckfg} (\gamma) = \mathfrak{z}_{\fl} (\gamma) \subset \fl$ and $\fl^i_{\gamma_i} = \mathfrak{z}_{\fl^i} (\gamma_i) \subset \fl^i$, $i=0, 1$, which are levi subalgebras corresponding to singular datum associated to $\gamma$ and $\gamma_i$. Now assume
	 \begin{equation} \label{eq:closure1}
	 	\Ind^{\fl^i}_{\fl^i_{\lambda_i}} \bfzero \preceq \Ind^{\fl^i}_{\fl^i_{\gamma_i}} \bfzero 
	 \end{equation} 
	for $i=1, 2$. Here the Richardson orbits are understood as in $\fl^i$, $i= 0, 1$, respectively. 
	Since $\check{\Orb}_{\bfd_1}$ is an even orbit in $\fl^1$, we have $\Ind^{\fl^1}_{\fl^1_{\lambda_1}}\bfzero = \check{\Orb}_{\bfd_1}$ and this is a birational induction ({\bf cite}).
	By Proposition \ref{prop:norm_comparison}, we have $\lVert \lambda_1 \rVert \leq \lVert \gamma_1 \rVert$. For $\bfd_0$, apply $f_{DC}$ to the two sides of \eqref{eq:closure1} to get
	\begin{equation}\label{eq:closure2}
		\check{\Orb}_{\bfd_0}
	 	= \Ind^{\fm}_{\fm_{\lambda_0}} \bfzero
	 	= f_{DC} \left( \Ind^{\fl^0}_{\fl^0_{\lambda_0}} \bfzero \right) \preceq 
	 	 f_{DC} \left(  \Ind^{\fl^0}_{\fl^0_{\gamma_0}} \bfzero \right)
	 	 = \Ind^{\fm}_{\fm_{\gamma_0}} \bfzero.
	\end{equation}
	Here the second and the last equality is by Corollary \ref{cor:richardson}. Indeed, the last condition in \Cref{cor:richardson} is satisfied due to the observation that $\lambda_0, \gamma_0 \in (\frac{1}{2} + \Z)^{N_0}$\footnote{See \Cref{rem:root_vs_weight}}. The inequality in the middle is by the fact that $f_{DC}$ is order-preserving (\Cref{prop:f_BC&f_DC}). Now we can apply Proposition \ref{prop:norm_comparison} to $\fm$ and \eqref{eq:closure2}, and conclude that $\lVert \lambda_0 \rVert \leq \lVert \gamma_0 \rVert$. Therefore we have 
	 	\[ \lVert \lambda_{\check{\Orb}} \rVert^2 = \lVert \lambda_0 \rVert^2 + \lVert \lambda_1 \rVert^2 \leq \lVert \gamma_0 \rVert^2 + \lVert \gamma_1 \rVert^2 = \lVert \gamma \rVert^2.\] 
	By \Cref{cor:closure}, this finishes the proof in types $B$ and $D$.

	\vskip 0.5em 
	\noindent {\bf Type $C$.} Let $\ckfg = \sp(2n)$. Let $\check{\Orb}$ be any orbit in $\sl(2n+1)$ whose corresponding partition is $\bfd$ (recall from \Cref{defn:q-unipotent} that we consider $\ckfg=\sp(2n)$ as a subalgebra of $\sl(2n+1)$ in this case).

	Define $\bfd_i$, $N_i$, $h_i$, $i = 0, 1$, and so on as in the case of type $B$ and $D$, and we can argue in exactly the same way, except that the roles of $\bfd_0$ and $\bfd_1$ are interchanged. More precisely, regard $\bfd_0$ as an orbit $\check{\Orb}_{\bfd_0}$ in $\fl^0 \simeq \sp(N_0)$ and regard $\bfd_1$ as an orbit $\check{\Orb}_{\bfd_1}$ in $\fm := \so(N_1)$ of type $B$ (recall that $|\bfd| = 2n+1$ is odd, so $N_1$ is odd). Note that $\lambda_1, \gamma_1 \in \Z^{N_1}$, so that they are all integral regarded as elements in $\fm$. The analogue of \eqref{eq:closure2} is
	\begin{equation}
		\check{\Orb}_{\bfd_1}
		= \Ind^{\fm}_{\fm_{\lambda_1}} \bfzero
		= f_{CB} \left( \Ind^{\fl^1}_{\fl^1_{\lambda_1}} \bfzero \right) \preceq 
		f_{CB} \left( \Ind^{\fl^1}_{\fl^1_{\gamma_1}} \bfzero \right)
		= \Ind^{\fm}_{\fm_{\gamma_1}} \bfzero.
	\end{equation}
\end{proof}

\begin{remark} \label{rem:root_vs_weight}
	When $\ckfg$ is of $C$ ($\fg$ is of type $B$), the above proof also works if we replace the root lattice by the weight lattice 
		\[\bigoplus_{i=1}^n \Z e_i\]
	However, when $\ckfg$ is of type $B$ (resp. $D$), so that $\fg$ is of type $C$ (resp. $D$), the proof no longer works if we replace the root lattice $Q_\g$ by the weight lattice 
		\[\bigoplus_{i=1}^n \Z e_i + \Z\left(\frac{1}{2}\sum_{i=1}^n e_i \right)\] 
	and the paritition $\bfd$ of $\check{\Orb}$ has even parts, since in this case the condition $\gamma_0 \in (\frac{1}{2} + \Z)^{N_0}$ in the proof of \Cref{thm:q-unipotent_weaklyunip} might not hold any more and neither does the last condition in \Cref{cor:richardson}. See \Cref{ex:root_vs_weight} below.
	
	When $\bfd$ has only odd parts (so that $\check{\Orb}$ is an even orbit in $\ckfg$), however, the proof still works for weight lattices since $\gamma = \gamma_1$, which is just the case treated by \Cref{thm:special_unipotent}. 
\end{remark}

\begin{example} \label{ex:root_vs_weight}
	Let $\ckfg = \so(2n)$ and take $\check{\Orb}$ to be the regular nilpotent orbit in $\sl(2n)$, corresponding to the partition $[2n]$. Then 
		\[\lambda = \lambda_{\check{\Orb}} = \left(\frac{2n-1}{2}, \frac{2n-3}{2}, \ldots, \frac{1}{2}\right).\] 
	The integral root system of $\lambda_{\check{\Orb}}$ is of type $D_{n}$ and hence $\fl = \ckfg$.
	Take 
		\[\gamma = \lambda - \left(\frac{1}{2}, \frac{1}{2}, \ldots, \frac{1}{2} \right) = \left(n-1, n-3, \ldots, 1, 0 \right) = \rho_\g\] 
	to be the half sum of positive roots of $\g$. 
	Then $\lVert \gamma \rVert < \lVert \lambda_{\check{\Orb}} \rVert$ and $\ckfg_\gamma = \ckfg_\lambda = \ckfh$. Therefore $\Ind_{\ckfg_\gamma}^{\ckfg} \bfzero = \Ind_{\ckfg_\lambda}^{\ckfg} \bfzero$ is the regular nilpotent orbit in $\ckfg$. Therefore $\lambda$ does not satisfy the condition in \Cref{cor:closure} and hence is not mildly unipotent with respect to the weight lattice of $\g=\so(2n)$. Note that, if we set $\ckfm=\so(2n+1) \supset \ckfg = \so(2n)$ and $\fm=\sp(2n)$ with $\ckfh \subset \ckfm$ as in the proof of \Cref{thm:q-unipotent_weaklyunip} (see \Cref{prop:bijection_richardson}), then $\ckfm_\gamma \simeq \so(3) \times \gl(1)^{n-1}$ does not lie in $\ckfg$, therefore it does not satisfy the last condition in \Cref{cor:richardson} or the condition in \Cref{prop:bijection_richardson}, (2), hence the middle inequality in \eqref{eq:closure2} might not hold. Indeed, $\Ind_{\ckfm_\gamma}^{\ckfm} \bfzero$ is the subregular nilpotent orbit in $\ckfm$, which is strictly smaller than $\Ind_{\ckfm_\lambda}^{\ckfm} \bfzero$, the regular nilpotent orbit in $\ckfm$.

	This is not surprising, since the finite dimensional representation of $\g=\so(2n)$ of highest weight $\lambda - \rho_\g = (\frac{1}{2}, \ldots, \frac{1}{2})$ is $S_+$, one of the two half-spin representations of $\Spin(2n)$, and $S_+ \otimes S_+ \simeq S_+ \otimes S_+^* = \operatorname{End}(S_+)$ contains the trivial representation of $\g$, whose infinitesimal character is $\gamma=\rho_\g$. Therefore $J_{max}(\lambda)$ is not weakly unipotent with respect to the weight lattice of $\g=\so(2n)$. This example also appears below Theorem 5.6 of \cite{McGovern1994}. On the other hand, \Cref{thm:q-unipotent_weaklyunip} implies that $J_{max}(\lambda)$ is mildly and hence weakly unipotent with respect to the root lattice of $\so(2n)$.
\end{example}

\subsection{Metaplectic special unipotent ideals}

 Recall metaplectic special unipotent ideals from \cite{BMSZ:metaplecticBV}.

\begin{theorem} \label{thm:ms}
	Any metaplectic special unipotent ideal $J_{max}(\lambda_{\check{\Orb}})$ for $\fg = \sp(2n)$ is mildly and hence weakly unipotent with respect to the root lattice of $\g$.
\end{theorem}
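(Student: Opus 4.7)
The plan is to adapt the argument of \Cref{thm:q-unipotent_weaklyunip} (type-$B$ case) to the metaplectic setting. Under the metaplectic Barbasch--Vogan duality of \cite{BMSZ:metaplecticBV}, a metaplectic special unipotent ideal $J_{max}(\lambda_{\check{\Orb}})$ for $\fg = \sp(2n)$ is attached to a nilpotent orbit $\check{\Orb}$ in $\so(2n)$ (in place of $\sl(2n+1)$ for the $q$-unipotent case), with $\lambda := \lambda_{\check{\Orb}} = \half h_{\check{\Orb}}$ viewed as an element of $\ckfh = \fh^*_{\sp(2n)}$ via the standard identification of Cartan subalgebras. By \Cref{cor:closure}, mild unipotence with respect to $Q_\g$ reduces to showing that for any $\gamma \in \Lambda := \lambda + Q_\g$ with $\Ind^{\ckfg_\Lambda}_{\ckfg_\lambda}\bfzero \preceq \Ind^{\ckfg_\Lambda}_{\ckfg_\gamma}\bfzero$, one has $\lVert \gamma \rVert \geq \lVert \lambda \rVert$.

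The first step will be the parity decomposition. Let $\bfd \in \parti_D(2n)$ be the partition of $\check{\Orb}$ and write $\bfd = \bfd_0 \sqcup \bfd_1$ by parity of parts, with $N_i := |\bfd_i|$; both $N_0$ and $N_1$ are even. In standard coordinates $\lambda$ has $N_0/2$ half-integer coordinates (from the even parts $\bfd_0$) and $N_1/2$ integer coordinates (from the odd parts $\bfd_1$ together with padding zeros), so the pseudo-Levi $\ckfg_\Lambda \subset \ckfg = \so(2n+1)$ decomposes as $\ckfg_\Lambda = \fl^0 \oplus \fl^1$ with $\fl^0 = \so(N_0)$ of type $D$ and $\fl^1 = \so(N_1 + 1)$ of type $B$. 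Any $\gamma \in \Lambda$ decomposes as $\gamma = \gamma_0 + \gamma_1$ respecting this parity structure since $Q_\g \subset \Z^n$, and the closure hypothesis splits across the two factors. For the type-$B$ factor $\fl^1$, the orbit $\check{\Orb}_{\bfd_1}$ (having only odd parts) is even, so \Cref{prop:norm_comparison} applied directly in $\fl^1$ gives $\lVert \gamma_1 \rVert \geq \lVert \lambda_1 \rVert$. For the type-$D$ factor $\fl^0$, I will pass through the metaplectic bijection $f_{DC}$ by means of \Cref{prop:bijection_richardson}(2) and \Cref{cor:richardson}, yielding
\[ \Ind^{\fm}_{\fm_{\lambda_0}}\bfzero = f_{DC}\!\left(\Ind^{\fl^0}_{\fl^0_{\lambda_0}}\bfzero\right) \preceq f_{DC}\!\left(\Ind^{\fl^0}_{\fl^0_{\gamma_0}}\bfzero\right) = \Ind^{\fm}_{\fm_{\gamma_0}}\bfzero \]
inside $\fm := \sp(N_0)$; \Cref{prop:norm_comparison} applied in $\fm$ then gives $\lVert \gamma_0 \rVert \geq \lVert \lambda_0 \rVert$. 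Summing the two inequalities yields $\lVert \gamma \rVert \geq \lVert \lambda \rVert$, establishing mild unipotence via \Cref{cor:closure} and hence weak unipotence via \Cref{cor:mildly_implies_weakly}.

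The main obstacle will be the bookkeeping needed to apply \Cref{cor:richardson}(2) for the type-$D$ factor, namely verifying that the centralizers $\fl^0_{\lambda_0}$ and $\fl^0_{\gamma_0}$ are contained in the type-$D$ subalgebra rather than a larger type-$B$ enveloping algebra, so that the $f_{DC}$-transfer of Richardson orbits is valid. This should be automatic: since $\lambda_0$ and $\gamma_0$ have only half-integer coordinates, no short root $\pm e_i$ of the ambient $\so(N_0 + 1)$ is integral on them, so short roots do not enter the centralizers. Up to this verification, the proof is structurally identical to that of \Cref{thm:q-unipotent_weaklyunip}.
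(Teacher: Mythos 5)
Your premise is not the paper's: in the sense of \cite{BMSZ:metaplecticBV}, the metaplectic dual of $\fg = \sp(2n)$ is $\fg^{ms} = \sp(2n)$ (not $\so(2n)$), and $\check{\Orb}$ carries a partition $\bfq \in \parti_C(2n)$, not $\bfd \in \parti_D(2n)$. These are genuinely different sets --- for instance $[4] \in \parti_C(4)\setminus\parti_D(4)$ while $[3,1] \in \parti_D(4)\setminus\parti_C(4)$ --- so as written your argument establishes mild unipotence of a different family of infinitesimal characters than the metaplectic special unipotent ones. This is a real gap in the setup, although your downstream parity split and $f_{DC}$-transfer never actually invoke the constraint ``$\bfd\in\parti_D$'' (the block $\bfd_0$ of even parts lies in $\parti_C^{ms}(N_0)$ and $\bfd_1\cup[1]$ lies in $\parti_B(N_1+1)$ for \emph{any} partition of $2n$), so the argument would go through once you replace $\parti_D(2n)$ with $\parti_C(2n)$.

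Beyond that correction, you are also taking a much longer route than the paper. You essentially re-derive, verbatim, the type-$B$ case of \Cref{thm:q-unipotent_weaklyunip}: parity decomposition, $\ckfg_\Lambda = \so(N_0)\oplus\so(N_1+1)$, the $f_{DC}$-transfer on the type-$D$ factor via \Cref{cor:richardson}, and two applications of \Cref{prop:norm_comparison}, together with the bookkeeping that the centralizers land inside the type-$D$ subalgebra. The paper instead observes that appending a $1$ gives $\bfq_+ = [q_1,\dots,q_l,1]\in\parti(2n+1)$, which under the embedding $\sp(2n)\hookrightarrow\sl(2n+1)$ built into \Cref{defn:q-unipotent} produces a $q$-unipotent infinitesimal character with $\lambda_{\bfq_+} = \lambda_{\check{\Orb}}$, and then simply cites \Cref{thm:q-unipotent_weaklyunip}. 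That one observation --- metaplectic special unipotent infinitesimal characters \emph{are} $q$-unipotent --- is the entire content of the paper's proof, and citing the already-established theorem is both shorter and avoids re-checking the hypotheses of \Cref{cor:richardson}.
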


\begin{proof}
	We observe that metaplectic special unipotent ideals are $q$-unipotent. Indeed, if $\check{\Orb}$ is a nilpotent orbit in the metaplectic dual $\fg^{ms} = \sp(2n)$ of $\fg$, corresponding to a partition $\bfq = [q_1, q_2, \ldots q_l] \in \parti_C(2n)$, then the partition
		\[ \bfq_+ := ((\bfq^t)^+)^t = [q_1, \dots, q_l, 1] \]
	defines a $q$-unipotent infinitesimal character $\lambda_{\bfq_+} = \frac{1}{2} h_{\check{\Orb}_+}$ (up to conjugation), where $\check{\Orb}_+$ is the nilpotent orbit in $\sl(2n+1)$ corresponding to $\bfq_+$. We have $\lambda_{\bfq_+} = \lambda_{\check{\Orb}}$. The rest follows from Theorem \ref{thm:q-unipotent_weaklyunip}.
\end{proof}

\subsection{The case of type \texorpdfstring{$A$}{A}} \label{subsec:type_A}

Suppose $\bfq$ is a partition of $n$. Write the lengths of the columns of the Young diagram of $\bfq$ as $c_1 \geq c_2 \geq \cdots c_m >0$. In other words, $\bfq^t  = [c_1, c_2, \dots, c_m]$. 
% We will also write $\bfq = (c_1, c_2, \ldots, c_m)$ in terms of columns.

%Let $I = (-\half, \half]$, so that $I$ forms a set of representatives of all the cosets in $\R/\Z$. 
\begin{comment}
Define the function $\sg: I \to \{\pm 1\}$ by
\[ 
\sg(r) = (-1)^{\lfloor r \rfloor} =
\begin{cases*}
	+1 & if $0 \leq r \leq \half$ \\
	-1 & if  $-\half < r < 0$ 
\end{cases*}
\]
\end{comment}

\begin{definition} \label{defn:xi_r}
	Suppose $\bfq = [q_1, q_2, \ldots, q_l]$ is a partition of $n$ with columns $\bfq^t  = [c_1 \geq c_2 \geq \dots \geq c_m]$.  Given $r = (-\half, \half]$, define an $n$-tuple $\xi_{r}(\bfq) \in \R^n$ by appending the sequences
	\begin{equation}\label{eq:rho_r_rows}
		\left( r -(-1)^{\lfloor r \rfloor} \left\lfloor \frac{q_i}{2} \right\rfloor, r -(-1)^{\lfloor r \rfloor} \left(\left\lfloor \frac{q_i}{2} \right\rfloor -1 \right),  \cdots, r, r+1,  \cdots,  r + (-1)^{\lfloor r \rfloor} \left\lfloor \frac{q_i - 1}{2}  \right\rfloor \right) 
	\end{equation}
	for each $i \geqslant 1$.
	Alternatively,  up to permutation, $\xi_{r}(\bfq) \in \R^n$ is the $n$-tuple
	\begin{multline}
	 	( 
	 	\underbrace{r, \ldots, r}_{c_1}, 
	 	\underbrace{r-(-1)^{\lfloor r \rfloor}, \ldots, r-(-1)^{\lfloor r \rfloor}}_{c_2}, 
	 	\underbrace{r + (-1)^{\lfloor r \rfloor},\ldots,r+(-1)^{\lfloor r \rfloor}}_{c_3}, \cdots, \\
		 \underbrace{r + (-1)^{j-1+\lfloor r \rfloor} \left\lfloor \frac{j}{2} \right\rfloor, \ldots,r + (-1)^{j-1+ \lfloor r \rfloor} \left\lfloor \frac{j}{2} \right\rfloor}_{c_j}, \cdots, \\
		 \underbrace{r + (-1)^{m-1 + \lfloor r \rfloor} \left\lfloor \frac{m}{2} \right\rfloor, \ldots,r + (-1)^{m-1 + \lfloor r \rfloor} \left\lfloor \frac{m}{2} \right\rfloor}_{c_m}
		).
	\end{multline}
	In other words, 
	\[ \xi_{r}(\bfq) = (\underbrace{r, \ldots, r}_{c_1}, \underbrace{r-1, \ldots, r-1}_{c_2}, \underbrace{r+1,\ldots,r+1}_{c_3}, \underbrace{r-2, \ldots, r-2}_{c_4}, \underbrace{r+2, \ldots, r+2}_{c_5}, \ldots) \]
	when $0 \leq r \leq \half$, and
	\[ \xi_{r}(\bfq) = (\underbrace{r, \ldots, r}_{c_1}, \underbrace{r+1, \ldots, r+1}_{c_2}, \underbrace{r-1,\ldots,r-1}_{c_3}, \underbrace{r+2, \ldots, r+2}_{c_4}, \underbrace{r-2, \ldots, r-2}_{c_5}, \ldots) \]
	when $-\half < r < 0$.

	We also generalize the above definition by replaceing $r$ by an $l$-tuple $\bfr = (r_1, r_2, \ldots, r_l)$ with each $r_i \in (-\half, \half]$. Define
	\[ \xi_{\bfr}(\bfq) := \xi_{r_1}([q_1]) \cup \xi_{r_2}([q_2]) \cup \cdots \cup \xi_{r_l}([q_l]) \in \R^n. \]
	In other words, $\xi_{\bfr}(\bfq)$ is obtained by appending the sequences \eqref{eq:rho_r_rows} with $r$ replaced by $r_i$, for all $1 \leq i \leq l$.
\end{definition}

Let $\lambda_{\check{\Orb}, r} = \xi_{r}(\bfq) \in \ckfh$ be the infinitesimal character defined above.

\begin{lemma} \label{lem:gl_min}
	Let $\ckfg = \gl(n)$ and $\check{\Orb} = \check{\Orb}_\bfq$ be a nilpotent orbit in $\ckfg$ corresponding to a partition $\bfq$ of $n$. Under the constraints that 
	\[ \Ind_{\ckfg_\nu}^{\ckfg} \bfzero = \check{\Orb}_\bfq, \]
	and that all coordinates of $\nu \in \ckfh$ belong to $r + \Z$, $\nu = \xi_{r}(\bfq)$ reaches the minimum of the norm $\lVert \nu \rVert$. 

	When $r \notin \{0, \half\}$, $\nu = \xi_{r}(\bfq)$ is the unique element in $\ckfh / W$ that reaches the minimum of the norm $\lVert \nu \rVert$ under the constraints above. 
	
	When $r = 0$ (resp. $\half$), the other elements that reach the minimum can be obtained from $\xi_{r}(\bfq)$ by replaceing all subsequences of the form \eqref{eq:rho_r_rows} corresponding to even (resp. odd) $q_i$ by their negatives (i.e., multiplying each coordinate of the sequences by $-1$).
\end{lemma}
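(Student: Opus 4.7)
The plan is to reduce the constrained minimization to a combinatorial problem on partitions and finish using the rearrangement inequality. First, I would observe that in $\ckfg = \gl(n)$ the centralizer $\ckfg_\nu$ of a semisimple $\nu \in \ckfh$ is the Levi $\prod_j \gl(m_j)$, where $m_1, \ldots, m_k$ are the multiplicities of the distinct coordinates of $\nu$; the Richardson orbit of such a Levi has Jordan type equal to the transpose of the partition formed by sorting the $m_j$'s. Hence the constraint $\Ind_{\ckfg_\nu}^{\ckfg}\bfzero = \check{\Orb}_\bfq$ is equivalent to asking that the multiset of multiplicities of the distinct coordinates of $\nu$ is exactly $\{c_1, \ldots, c_m\}$.

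Writing the distinct coordinates of $\nu$ as $a_1, \ldots, a_m$ with $a_j$ carrying multiplicity $c_{\pi(j)}$ for some $\pi \in \fS_m$, we have $\|\nu\|^2 = \sum_j c_{\pi(j)}\, a_j^2$. For any fixed set $\{a_j\}$, the rearrangement inequality forces the minimum over $\pi$ to be attained by pairing the largest $c_i$ with the smallest $a_j^2$, so the remaining freedom is the choice of $m$ distinct elements of $r + \Z$ with the smallest squares. Enumerating $r + \Z$ by non-decreasing $a^2$ gives
\[
	r,\ r - (-1)^{\lfloor r \rfloor},\ r + (-1)^{\lfloor r \rfloor},\ r - 2(-1)^{\lfloor r \rfloor},\ r + 2(-1)^{\lfloor r \rfloor},\ \ldots,
\]
and pairing these in order with $c_1 \geq c_2 \geq \cdots \geq c_m$ reproduces the column form of $\xi_r(\bfq)$ in \Cref{defn:xi_r}, showing that the minimum is realized by $\xi_r(\bfq)$.

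For $r \in (-\tfrac{1}{2}, \tfrac{1}{2}] \setminus \{0, \tfrac{1}{2}\}$, the squares $(r + k)^2$ for $k \in \Z$ are pairwise distinct, so the minimum set and its pairing with the $c_i$'s are forced uniquely modulo swaps of equal $c_i$'s, which produce the same element of $\ckfh/W$. For $r = 0$ one has $k^2 = (-k)^2$ for $k \geq 1$, and for $r = \tfrac{1}{2}$ one has $(k + \tfrac{1}{2})^2 = (-k - \tfrac{1}{2})^2$ for $k \geq 0$; these collapses introduce a sign ambiguity at each level of $|a|^2$ at which $\bfq^t$ has a column.

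The main obstacle will be translating this sign freedom into the ``negate the subsequence \eqref{eq:rho_r_rows}'' language of the statement and confirming it exhausts all minimizers. I would check that the row-subsequence \eqref{eq:rho_r_rows} for $q_i$ is symmetric about $0$ precisely when $q_i$ is odd and $r = 0$, or when $q_i$ is even and $r = \tfrac{1}{2}$, so in those cases negation leaves the multiset of coordinates invariant; in the remaining (asymmetric) cases, negation flips only the single extremal entry of the row at level $\lfloor q_i/2 \rfloor$ while leaving the rest of the row paired symmetrically. Imposing the Richardson multiplicity condition then forces us to negate either all or none of the rows of a given length simultaneously, because a partial negation among the rows of length $q_i = 2s$ (for $r = 0$, say) would shift the multiplicities at level $s$ away from the required pair of column heights $\{c_{2s}, c_{2s+1}\}$ of $\bfq^t$. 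Matching these allowed configurations with the sign ambiguity of the previous paragraph is where I expect the most combinatorial bookkeeping.
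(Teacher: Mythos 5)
Your argument follows the paper's proof almost step for step: identify $\ckfg_\nu$ with the Levi $\prod_j \gl(c_j)$ via the Richardson/transpose correspondence, write $\lVert\nu\rVert^2 = \sum_j c_{\pi(j)} a_j$, apply the rearrangement inequality to pair larger column heights with smaller squares, and read off $\xi_r(\bfq)$ from the enumeration of $r+\Z$ by increasing square. For the enumeration of the remaining minimizers at $r\in\{0,\half\}$ (which the paper itself dispatches as ``easy to verify'' and never uses downstream), your all-or-none-per-length observation is the right mechanism; just note additionally that distinct even (resp.\ odd) part sizes may be negated independently of one another, so the set of minimizers is indexed by subsets of the distinct even (resp.\ odd) parts of $\bfq$.
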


\begin{proof}
	Write $\bfq^t  = [c_1 \geq c_2 \geq \dots \geq c_m]$. By \cite[Section 7.2]{C-M}, $\ckfg_\nu$ must be conjugate to the standard levi subalgebra $\mathfrak{gl}(c_1) \times \mathfrak{gl}(c_2) \times \cdots \times \mathfrak{gl}(c_l)$ corresponding to the subset
	\[ 
	  	\{e_1 - e_2, \ldots, e_{c_1 - 1} - e_{c_1}, e_{c_1 + 1} - e_{c_1 + 2}, \ldots, e_{c_1 + c_2 - 1} - e_{c_1 + c_2}, e_{c_1 + c_2 + 1} - e_{c_1 + c_2 + 2}, \ldots, e_{n-1} - e_n\} 
	\]
	of simple roots in $\Pi$. In particular, $\nu$ has exactly $c_1$ coordinates equal to some number $x_1 \in r + \Z$, exactly $c_2$ coordinates equal to some number $x_2 \in r + \Z$, etc. Therefore, up to permutation, $\nu$ must be of the form
	 \[
	 	( 
	 	\underbrace{x_1, \ldots, x_1}_{c_1}, 
	 	\underbrace{x_2, \ldots, x_2}_{c_2},  \cdots,
		 \underbrace{x_j, \ldots,x_j}_{c_j}, \cdots, 
		 \underbrace{x_m, \ldots,x_m}_{c_m}
		)
	 \]
	for some distinct $x_j \in r + \Z$, $1 \leq j \leq m$. 

	Set $a_j := x_j^2$ for all $j$. We have $\lVert \nu \rVert^2 = \sum_{j=1}^{m} c_j a_j$. It is easy to see that this sum only reaches the minimum when the sequence $a_j$ is non-decreasing (otherwise swapping $a_i$ and $a_j$ for some $i<j$ with $a_i > a_j$ will decrease the sum).
	Since $x_j \in r + \Z$, the minimum of $\lVert \nu \rVert$ is attained at 
		\[ (x_1, \ldots, x_m) = (r, r-1, r+1, r-2, r+2, \ldots) \]
	when $0 \leq r \leq \half$, and at
		\[ (x_1, \ldots, x_m) = (r, r+1, r-1, r+2, r-2, \ldots) \]
	when $-\half < r < 0$. In other words, $\lVert \nu \rVert$ reaches minimum under the constraints above at $\nu = \xi_{r}(\bfq)$. The remaining statements concerning other elements reaching the minimum are easy to verify.
\end{proof}

\begin{lemma} \label{lem:gl_compare}
	Let $\bfq_1$ and $\bfq_2$ be two partitions of $n$ such that $\bfq_1 \preceq \bfq_2$. Then for any $r \in (-\half, \half]$, we have
	\[ \lVert \xi_{r}(\bfq_1) \rVert \leq \lVert \xi_{r}(\bfq_2) \rVert. \]
	Moreover, when $r \notin \{0, \half\}$, the equality holds if and only if $\bfq_1 = \bfq_2$.
\end{lemma}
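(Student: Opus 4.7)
The plan is to reduce the estimate to a one-variable monotonicity claim about a sequence that depends only on $r$, then apply summation by parts against the dominance-reversing property of transposition. Regrouping the entries of $\xi_r(\bfq)$ column by column, as in the ``alternative'' description in \Cref{defn:xi_r}, one sees that
\[ \lVert \xi_r(\bfq) \rVert^2 = \sum_{j \geq 1} c_j\, a_j, \qquad a_j := y_j^2, \quad y_j := r + (-1)^{j-1+\lfloor r \rfloor} \lfloor j/2 \rfloor, \]
where $\bfq^t = [c_1 \geq c_2 \geq \cdots]$, with the convention $c_j = 0$ beyond the number of columns. Crucially, the sequence $(a_j)$ depends only on $r$, not on $\bfq$, so the same ``weights'' $a_j$ appear for both $\bfq_1$ and $\bfq_2$.

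The first step is to check that $(a_j)_{j \geq 1}$ is non-decreasing, and strictly increasing when $r \notin \{0, \half\}$. A short case analysis on the sign of $r$ gives
\[ |y_1|,\, |y_2|,\, |y_3|,\, \ldots \; = \; |r|,\; 1-|r|,\; 1+|r|,\; 2-|r|,\; 2+|r|,\; \ldots, \]
with an interchange of the second and third entries in each pair when $r<0$. The consecutive differences reduce to $1-2|r|$ and $2|r|$, both non-negative on $(-\half,\half]$ and strictly positive precisely when $r \notin \{0, \half\}$; squaring preserves (strict) monotonicity.

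The second step is Abel summation. Since the dominance order is reversed by transposition, $\bfq_1 \preceq \bfq_2$ translates to $D_k := \sum_{j=1}^k (c_j^{(2)} - c_j^{(1)}) \leq 0$ for all $k \geq 1$, with $D_M = 0$ once $M$ is at least the number of columns of both partitions. Writing $\delta_j := c_j^{(2)} - c_j^{(1)}$, this yields
\[ \lVert \xi_r(\bfq_2) \rVert^2 - \lVert \xi_r(\bfq_1) \rVert^2 = \sum_{j=1}^M \delta_j a_j = \sum_{j=1}^{M-1} D_j (a_j - a_{j+1}) \geq 0, \]
a sum of products of two non-positive quantities. For $r \notin \{0, \half\}$, strict monotonicity of $(a_j)$ makes every $a_j - a_{j+1} < 0$, so equality forces $D_j = 0$ for all $j < M$, hence $\bfq_1^t = \bfq_2^t$ and $\bfq_1 = \bfq_2$. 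The only genuine bookkeeping is the sign-of-$r$ case split in the monotonicity step; after that the argument is a textbook application of summation by parts.
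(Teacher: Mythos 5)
Your proof is correct and follows essentially the same route as the paper's: express $\lVert\xi_r(\bfq)\rVert^2$ as $\sum_j c_j a_j$ with the $r$-dependent squared weights $a_j$, apply Abel summation, and combine the non-decreasing (resp.\ strictly increasing) property of $(a_j)$ with the reversal $\bfq_1^t \succeq \bfq_2^t$ coming from $\bfq_1 \preceq \bfq_2$. You are in fact a bit more explicit than the paper in verifying the monotonicity of $(a_j)$ via the differences $1-2|r|$ and $2|r|$, which the paper simply asserts.
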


\begin{proof}
	Set $s(\bfq_i):=\lVert \xi_{r}(\bfq_i) \rVert^2$, $i=1,2$. Let $L$ be the maximum of numbers of columns of $\bfq_i$, $i=1,2$. Write the columns of $\bfq_i$ as $c_1(\bfq_i) \geq c_2(\bfq_i) \geq \cdots c_{L}(\bfq_i)$, $i=1,2$. Here we allow $c_j(\bfq_i) = 0$ for some $j$. Define the non-decreasing sequence $a_1, \ldots, a_L$ by 
		\[ a_j:= \left( r + (-1)^{j-1+\lfloor r \rfloor} \left\lfloor \frac{j}{2} \right\rfloor \right) ^2.\]
	Set $C_j(\bfq_i) := \sum_{k=1}^j c_k(\bfq_i)$ for $j\geq 0$ and $i=1,2$ (note that $C_0(\bfq_i)=0$).
	Then summation by parts gives
		\begin{equation} \label{eq:s(bfq)}
			s(\bfq_i) 
			= \sum_{j=1}^L c_j(\bfq_i) a_j
			= C_L(\bfq_i)a_L + \sum_{j=1}^{L-1} C_j(\bfq_i) (a_j - a_{j+1})
			= n a_L + \sum_{j=1}^{L-1} C_j(\bfq_i) (a_j - a_{j+1}).			 
		\end{equation}
	Therefore
	\[ s(\bfq_1)-s(\bfq_2) = \sum_{j=1}^{L-1} (C_j(\bfq_1)-C_j(\bfq_2)) (a_j-a_{j+1}).\]
	Since $\bfq_1 \preceq \bfq_2$, taking transpose gives $\bfq_1^t \succeq \bfq_2^t$ and hence $C_j(\bfq_1)-C_j(\bfq_2) \geq 0$ for all $j$. On the other hand, $a_j - a_{j+1} \leq 0$ since $a_j$ is non-decreasing. These two facts together with \eqref{eq:s(bfq)} gives $s(\bfq_1) \leq s(\bfq_2)$.

	When $r \notin \{0, \half\}$, the sequence $a_j$ is strictly increasing. Therefore the equality $s(\bfq_1) = s(\bfq_2)$ holds if and only if $C_j(\bfq_1) = C_j(\bfq_2)$ for all $j$, which is equivalent to $\bfq_1 = \bfq_2$.
\end{proof}

\begin{theorem}\label{thm:type_A}
	Let $\ckfg = \gl(n)$ and $\check{\Orb} = \check{\Orb}_\bfq$ be a nilpotent orbit in $\ckfg$ corresponding to a partition $\bfq$ of $n$. Given any $r \in (-\half, \half]$, let $\lambda_{\check{\Orb}, r} = \xi_{r}(\bfq) \in \ckfh/W$ be the infinitesimal character defined above. Then the maximal primitive ideal $J_{max}(\lambda_{\check{\Orb}, r})$ for $\fg = \gl(n)$ is mildly and hence weakly unipotent with respect to $\GL(n)$.
	%the lattice $Q' = \bigoplus_{i=1}^n \Z e_i$.
\end{theorem}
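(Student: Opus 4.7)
The plan is to apply \Cref{cor:closure} directly. The first step is to identify the relevant integral data. Since $\lambda = \xi_r(\bfq)$ has all coordinates in $r + \Z$, every element $\nu \in \Lambda = \lambda + X^*$ (where $X^* = \bigoplus_{i=1}^n \Z e_i$ is the weight lattice of $\GL(n)$) also has all coordinates in $r + \Z$. Consequently, for any root $e_i - e_j$ and any $\nu \in \Lambda$, we have $\langle e_i - e_j, \nu\rangle \in \Z$. Thus $R(\Lambda)$ is the full root system of $\gl(n)$ and $\ckfg_\Lambda = \ckfg = \gl(n)$.

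By \Cref{cor:closure}, it suffices to prove that if $\nu \in \Lambda$ satisfies
\[
\Ind_{\ckfg_\lambda}^{\ckfg} \bfzero \;\preceq\; \Ind_{\ckfg_\nu}^{\ckfg} \bfzero
\]
in $\gl(n)$, then $\lVert \nu \rVert \geq \lVert \lambda \rVert$. The second step is to observe that $\ckfg_\lambda$ is conjugate to the standard Levi $\gl(c_1)\times\cdots\times\gl(c_m)$ where $\bfq^t = [c_1,\ldots,c_m]$, so by the well-known recipe for Richardson orbits in type $A$ (see \cite[Section~7.2]{C-M}), we have $\Ind_{\ckfg_\lambda}^{\ckfg}\bfzero = \check{\Orb}_\bfq$. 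Let $\bfq'$ be the partition with $\Ind_{\ckfg_\nu}^{\ckfg}\bfzero = \check{\Orb}_{\bfq'}$; the closure relation above becomes $\bfq \preceq \bfq'$.

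The third step is the norm comparison, which is handled entirely by the two lemmas already at hand. By \Cref{lem:gl_min} applied to $\bfq'$, with the constraint that all coordinates of $\nu$ lie in $r + \Z$, the minimum of $\lVert \nu\rVert$ over all $\nu$ inducing $\check{\Orb}_{\bfq'}$ is achieved at $\nu = \xi_r(\bfq')$, so $\lVert \nu \rVert \geq \lVert \xi_r(\bfq')\rVert$. By \Cref{lem:gl_compare} applied to $\bfq \preceq \bfq'$, we have $\lVert \xi_r(\bfq')\rVert \geq \lVert \xi_r(\bfq)\rVert = \lVert \lambda \rVert$. Chaining the two inequalities gives $\lVert \nu\rVert \geq \lVert \lambda\rVert$, as required.

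There is no real obstacle here beyond correctly assembling the ingredients: \Cref{cor:closure} has reduced mild unipotence to a purely combinatorial statement about Richardson orbits and norms, and the constructions in \Cref{defn:xi_r} were designed precisely so that \Cref{lem:gl_min,lem:gl_compare} together furnish the comparison. The most delicate point, which is already absorbed into \Cref{lem:gl_min}, is the verification that among all $\nu$ with coordinates in the coset $r + \Z$ inducing a prescribed orbit, the candidate $\xi_r(\bfq')$ really is the minimizer; the slight non-uniqueness at $r \in \{0, \tfrac12\}$ does not affect the norm inequality. Mild unipotence then implies weak unipotence by \Cref{cor:mildly_implies_weakly}.
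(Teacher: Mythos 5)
Your proposal is correct and takes essentially the same route as the paper, whose proof is a one-line citation to exactly \Cref{lem:gl_min}, \Cref{lem:gl_compare} and \Cref{cor:closure}; you have simply filled in the assembly. The details you supply — that $\Lambda = \lambda + \bigoplus_i \Z e_i$ keeps all coordinates in $r+\Z$ so $\ckfg_\Lambda = \ckfg$, that $\Ind_{\ckfg_\lambda}^{\ckfg}\bfzero = \check{\Orb}_\bfq$, and the chain $\lVert\nu\rVert \geq \lVert\xi_r(\bfq')\rVert \geq \lVert\xi_r(\bfq)\rVert = \lVert\lambda\rVert$ via the two lemmas — are exactly what is intended.
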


\begin{proof}
	This follows from \Cref{lem:gl_min}, \Cref{lem:gl_compare} and \Cref{cor:closure}.
\end{proof}

\begin{corollary}\label{cor:type_A_rtuple}
	Let $\ckfg = \gl(n)$ and $\check{\Orb} = \check{\Orb}_\bfq$ be a nilpotent orbit in $\ckfg$ corresponding to a partition $\bfq = [q_1, \ldots, q_l]$ of $n$. Given any $l$-tuple $\bfr = (r_1, \ldots, r_l)$ such that $r_i \in (-\half, \half]$ for all $i$, let $\lambda = \lambda_{\check{\Orb}, \bfr} = \xi_{\bfr}(\bfq) \in \ckfh/W$ be the infinitesimal character defined above. Then the maximal primitive ideal $J_{max}(\lambda_{\check{\Orb}, \bfr})$ for $\fg = \gl(n)$ is mildly unipotent with respect to $\GL(n)$.
	%the lattice $Q' = \bigoplus_{i=1}^n \Z e_i$.
\end{corollary}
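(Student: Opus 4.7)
The plan is to reduce \Cref{cor:type_A_rtuple} directly to \Cref{thm:type_A} by decomposing $\lambda = \lambda_{\check{\Orb}, \bfr}$ along the cosets of $\Z$ inside $\R$ determined by the distinct values of the $r_i$, and applying the closure-relation criterion of \Cref{cor:closure} block by block.

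Concretely, let $s_1, \ldots, s_k \in (-\half, \half]$ be the distinct values among $r_1, \ldots, r_l$, set $I_j := \{ i : r_i = s_j \}$ and form the sub-partition $\bfq^{(j)} := [q_i]_{i \in I_j}$ of size $n_j := \sum_{i \in I_j} q_i$. By the definition of $\xi_\bfr(\bfq)$, after permuting coordinates we may write $\lambda = \lambda^{(1)} \cup \cdots \cup \lambda^{(k)}$, where $\lambda^{(j)} = \xi_{s_j}(\bfq^{(j)}) \in \C^{n_j}$ has all coordinates in $s_j + \Z$. The pairwise differences $\lambda_a - \lambda_b$ lie in $\Z$ if and only if $a$ and $b$ come from the same block, so the integral root system of $\Lambda = \lambda + \Z^n$ is the root system of $\ckfg_\Lambda = \prod_j \gl(n_j)$. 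Moreover $\ckfg_\lambda = \prod_j \ckfg_{\lambda^{(j)}}$, with $\ckfg_{\lambda^{(j)}}$ the centralizer of $\lambda^{(j)}$ inside $\gl(n_j)$.

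For any $\nu \in \lambda + \Z^n$ (so $\nu_a - \lambda_a \in \Z$ coordinatewise), each coordinate of $\nu$ lies in the same coset $s_j + \Z$ as the corresponding coordinate of $\lambda$, and therefore $\nu$ decomposes in exactly the same fashion as $\nu = \nu^{(1)} \cup \cdots \cup \nu^{(k)}$ with $\nu^{(j)} \in \C^{n_j}$. Consequently $\ckfg_\nu = \prod_j \ckfg_{\nu^{(j)}}$ and, since induction and the closure order on nilpotent orbits decompose componentwise for a direct product of reductive Lie algebras, the condition
\[
\Ind_{\ckfg_\lambda}^{\ckfg_\Lambda} \bfzero \;\preceq\; \Ind_{\ckfg_\nu}^{\ckfg_\Lambda} \bfzero
\]
is equivalent to $\Ind_{\ckfg_{\lambda^{(j)}}}^{\gl(n_j)} \bfzero \preceq \Ind_{\ckfg_{\nu^{(j)}}}^{\gl(n_j)} \bfzero$ for every $j$.

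Now each $\lambda^{(j)} = \xi_{s_j}(\bfq^{(j)})$ is of the form treated by \Cref{thm:type_A}, applied inside $\gl(n_j)$ (where the integral root system of $\lambda^{(j)} + \Z^{n_j}$ is all of $\gl(n_j)$); combining that theorem with \Cref{cor:closure} yields $\lVert \nu^{(j)} \rVert \geq \lVert \lambda^{(j)} \rVert$ for each $j$. Summing squared norms gives
\[
\lVert \nu \rVert^2 \;=\; \sum_{j=1}^k \lVert \nu^{(j)} \rVert^2 \;\geq\; \sum_{j=1}^k \lVert \lambda^{(j)} \rVert^2 \;=\; \lVert \lambda \rVert^2,
\]
and invoking \Cref{cor:closure} one last time shows that $J_{max}(\lambda_{\check{\Orb}, \bfr})$ is mildly unipotent with respect to $\GL(n)$. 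There is no serious obstacle; the only point that requires a moment's care is the compatibility of $\ckfg_\Lambda$, $\ckfg_\lambda$ and Richardson induction with the block decomposition, which is immediate once one checks that the integral roots of $\Lambda$ are exactly those connecting coordinates sharing the same coset $s_j + \Z$.
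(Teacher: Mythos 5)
Your proof is correct and follows essentially the same route as the paper: decompose $\Lambda$ into blocks according to the distinct $s_j$, observe $\ckfg_\Lambda = \prod_j \gl(n_j)$, and reduce to the single-coset case. The paper simply states ``the rest follows from the same argument as in the proof of \Cref{thm:type_A}''; you spell out the block decomposition of $\nu$, the componentwise behavior of Richardson induction and the closure order, and the summing of squared norms, which makes the reduction explicit but adds no new idea.
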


\begin{proof}
	Let $s_1, \ldots, s_k$ be the distinct numbers in $\{r_1, \ldots, r_l\}$. Group $q_i$'s according to the values of $r_i$'s, i.e., write $\bfq = \bfq_1 \cup \bfq_2 \cup \cdots \cup \bfq_k$, where $\bfq_j$ is the sub-partition of $\bfq$ consisting of all $q_i$'s with $r_i = s_j$. Then $\xi_{\bfr}(\bfq) = \xi_{s_1}(\bfq_1) \cup \xi_{s_2}(\bfq_2) \cup \cdots \cup \xi_{s_k}(\bfq_k)$ up to permutation of coordinates. 
	
	Set $\Lambda := \lambda + Q'$. Then the associated pseudo-levi subalgebra $\fg_\Lambda$ is the standard Levi subalgebra $\gl(n_1) \times \gl(n_2) \times \cdots \times \gl(n_k)$, where $n_j$ is the size of the partition $\bfq_j$, $1 \leq j \leq k$. Let $\check{\Orb}_j$ be the nilpotent orbit in $\gl(n_j)$ corresponding to the partition $\bfq_j$. The rest follows from the same argument as in the proof of \Cref{thm:type_A} using \Cref{cor:closure}.
\end{proof}

\subsection{More general cases} \label{subsec:general_cases}

\begin{definition} \label{defn:rho}
	Suppose $\bfq = [q_1, q_2, \ldots, q_m]$ is a partition of $n$ and $\bfs = (s_1, s_2, \ldots, s_m)$ is an $m$-tuple of real numbers, satisfying $|s_i| < \half$ for all $i$. Define an $n$-tuple $\rho_{\bfr}(\bfq) \in \R^n$ by appending the sequences
		\[ \left(  \frac{q_i - 1}{2} + s_i, \frac{q_i - 3}{2} + s_i, \ldots, \frac{3 - q_i}{2} + s_i, \frac{1 - q_i}{2}+s_i  \right) \]
	for each $i \geqslant 1$. When $s_i = 0$ for all $i$, we simply write $\rho(\bfq) = \rho_{\bfzero}(\bfq)$. 

	We say that $(\bfq, \bfs)$ is \emph{antisymmetric} if the members of $\bfq$ and the coordinates of $\bfs$ can be permuted simultaneously so that $\bfq$ is of the form
	\begin{equation} \label{eq:antisymm_q}
		\bfq = [\bfd, \bfp_1, \bfp_1, \bfp_2, \bfp_2, \ldots, \bfp_l, \bfp_l],
	\end{equation} 
	where $\bfd$ and $\bfp_i$ are subpartitions of $\bfq$ with $\# \bfd = k $ and $\# \bfp_i = m_i$, whereas $\bfs$ is of the form
	\begin{equation} \label{eq:antisymm_s}
		\bfs = (\underbrace{0, \ldots, 0}_{k}, \underbrace{t_1, \ldots, t_1}_{m_1}, \underbrace{-t_1, \ldots, -t_1}_{m_1}, \underbrace{t_2, \ldots, t_2}_{m_2}, \underbrace{-t_2, \ldots, -t_2}_{m_2}, \ldots, \underbrace{t_l, \ldots, t_l}_{m_l}, \underbrace{-t_l, \ldots, -t_l}_{m_l})
	\end{equation}
	where $t_i$ are distinct positive real numbers. In particular, this implies that if $s_i \neq 0$ for some $i$, then $q_i$ has even multiplicity in $\bfq$.
\end{definition}

\begin{remark} \label{rem:rho_xi}
	Also note that, if we define a new $m$-tuple $\bfr = (r_1, \ldots, r_m)$ from $\bfs$ by setting, for each $1 \leq i \leq m$,
	\[ r_i = \begin{cases}
		s_i, & \text{ if } q_i \text{ is odd}, \\
		s_i - (-1)^{\lfloor s_i \rfloor}\half, & \text{ if } q_i \text{ is even},
	\end{cases} \]
	then $\rho_{\bfs}(\bfq)$ equals $\xi_{\bfr}(\bfq)$ in the sense of \Cref{defn:xi_r}.
\end{remark}

\begin{remark} \label{rem:antisymm}
	Observe that, if $(\bfq, \bfs)$ is antisymmetric, then $\rho_{\bfs}(\bfq) \in \R^n$ lies in the Cartan subalgebra of a classical Lie algebra. More precisely, $\rho_{\bfs}(\bfq)$ lies in the Cartan subalgebra of $\so(2n+1)$ (resp. $\sp(2n)$, $\so(2n)$) if $\bfq$ is a partition of $2n+1$ (resp. $2n$, $2n$) and the number of odd members of $\bfq$ is odd (resp. even, even).
\end{remark}

\begin{theorem} \label{thm:general_classical}
	Let $\fg$ be a classical Lie algebra and $\ckfg$ be the Langlands dual of $\fg$.
	With the notations in \Cref{defn:rho}, let $\check{\Orb} = \check{\Orb}_\bfq$ be a nilpotent orbit in $\ckfg$ corresponding to a partition $\bfq$ of $n$. Suppose $(\bfq, \bfs)$ is antisymmetric.	
	Let $\lambda = \lambda_{\check{\Orb}, \bfs} := \rho_{\bfs}(\bfq) \in \ckfh/W$ be the infinitesimal character defined above. Then the maximal primitive ideal $J_{max}(\lambda_{\check{\Orb}, \bfs})$ of $\Ug$ is mildly and hence weakly unipotent with respect to the root lattice of $\fg$.
\end{theorem}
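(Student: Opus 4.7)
The plan is to apply \Cref{cor:closure} after decomposing the integral pseudo-Levi $\ckfg_\Lambda$ into a classical factor (handled by \Cref{thm:q-unipotent_weaklyunip}) and a product of general linear factors (each handled by \Cref{cor:type_A_rtuple}); the antisymmetric structure of $(\bfq,\bfs)$ is exactly what makes this factorization possible.

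First I would analyze the integral root system $R(\Lambda)$ using the antisymmetric decomposition. The coordinates of $\lambda = \rho_\bfs(\bfq)$ separate into a ``$\bfd$-block'' whose coordinates lie in $\tfrac{1}{2}\Z$, and, for each $1 \le i \le l$, a pair of subblocks whose coordinates lie in $t_i + \Z$ and $-t_i + \Z$ respectively. Because $0 < t_i < \tfrac{1}{2}$ and the $t_i$'s are distinct, no integral root of $\ckfg$ connects different blocks. Within the $i$-th pair of subblocks, the integral roots (combining $e_a - e_b$ within each subblock and $e_a + e_b$ across them) realize the root system of $\gl(2|\bfp_i|)$ after the sign-flipping identification $\pi_i$ that negates the coordinates of the $-t_i$-subblock. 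This yields
\[
\ckfg_\Lambda \;\simeq\; \ckfm_0 \times \prod_{i=1}^{l} \gl(2|\bfp_i|),
\]
where $\ckfm_0$ is the classical (possibly pseudo-Levi) factor supported on the $\bfd$-block, of exactly the $\fl^0 \times \fl^1$ form appearing in the proof of \Cref{thm:q-unipotent_weaklyunip}. The restriction $\lambda_0$ of $\lambda$ to $\ckfm_0$ equals $\rho(\bfd)$, the $q$-unipotent infinitesimal character attached to $\check\Orb_\bfd$; in each $\gl(2|\bfp_i|)$, the image $\pi_i(\lambda_i)$ has multiset $\rho(\bfp_i \cup \bfp_i) + t_i(1,\ldots,1)$, which by \Cref{rem:rho_xi} equals $\xi_{\bfr_i}(\bfp_i \cup \bfp_i)$ for some tuple $\bfr_i$ with entries in $(-\tfrac{1}{2},\tfrac{1}{2})$.

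Next, for any $\nu \in \lambda + Q_\g$, I would write $\nu = \nu_0 \sqcup \bigsqcup_i \nu_i$ compatibly with the block decomposition. Since $Q_\g$ consists of integer vectors in the standard coordinates, the difference $\nu_0 - \lambda_0$ is integer-valued in $\ckfm_0$, and each $\pi_i(\nu_i - \lambda_i)$ lies in $\Z^{2|\bfp_i|}$, the weight lattice of $\GL(2|\bfp_i|)$. Because $\Ind\bfzero$ commutes with direct products of Lie algebras, the hypothesis
\[
\Ind_{\ckfg_\lambda}^{\ckfg_\Lambda}\bfzero \;\preceq\; \Ind_{\ckfg_\nu}^{\ckfg_\Lambda}\bfzero
\]
of \Cref{cor:closure} decomposes into the analogous inequalities in $\ckfm_0$ and in each $\gl(2|\bfp_i|)$. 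Applying the argument of \Cref{thm:q-unipotent_weaklyunip} to the $\ckfm_0$-factor (simple factor by simple factor) yields $\lVert \lambda_0 \rVert \le \lVert \nu_0 \rVert$, while \Cref{cor:type_A_rtuple} applied in each $\gl(2|\bfp_i|)$ yields $\lVert \pi_i(\lambda_i) \rVert \le \lVert \pi_i(\nu_i) \rVert$, i.e., $\lVert \lambda_i \rVert \le \lVert \nu_i \rVert$ since $\pi_i$ preserves norms. Summing the squared inequalities over all factors gives $\lVert \lambda \rVert \le \lVert \nu \rVert$, and \Cref{cor:closure} then yields mild unipotence with respect to $Q_\g$; weak unipotence follows from \Cref{cor:mildly_implies_weakly}.

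The main obstacle is the first step: rigorously establishing the factorization $\ckfg_\Lambda \simeq \ckfm_0 \times \prod_i \gl(2|\bfp_i|)$ through the sign-flipping identifications $\pi_i$, and verifying that the root-lattice constraint $\nu - \lambda \in Q_\g$ decomposes into the lattice conditions required by the two previous theorems. A secondary subtlety is that $\ckfm_0$ may itself be a pseudo-Levi product of two classical Lie algebras (of the form $\fl^0 \times \fl^1$ as in the proof of \Cref{thm:q-unipotent_weaklyunip}), so one must apply \Cref{thm:q-unipotent_weaklyunip} simple-factor by simple-factor and check that an integer shift of $\nu_0$ (rather than a full root-lattice shift in $\ckfm_0^\vee$) suffices for the argument of that theorem, which appears to be the case based on the proof structure.
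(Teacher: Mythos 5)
Your overall strategy matches the paper's: decompose the integral pseudo-Levi $\ckfg_\Lambda$ into a classical factor handled by the argument of \Cref{thm:q-unipotent_weaklyunip} and general linear factors handled by \Cref{cor:type_A_rtuple}, then invoke \Cref{cor:closure}. However, there is a genuine error in your identification of $\ckfg_\Lambda$: the $\bfp_i$-pair of rows of $\bfq$ contributes a factor $\gl(|\bfp_i|)$, not $\gl(2|\bfp_i|)$. The source of the error is treating $\lambda = \rho_\bfs(\bfq)$ as an $N$-tuple with $2|\bfp_i|$ ``$\bfp_i$-subblock'' coordinates. In fact $\lambda$, as an infinitesimal character for classical $\fg$, lives in $\ckfh \simeq \C^{\lfloor N/2 \rfloor}$; the $N$-tuple $\rho_\bfs(\bfq)$ is identified with an element of $\ckfh$ by pairing each entry $x$ with its negative $-x$ (this is exactly what the antisymmetric hypothesis and \Cref{rem:antisymm} provide), so the $2|\bfp_i|$ cells with shifts $\pm t_i$ contribute only $n_i = |\bfp_i|$ coordinates after this folding. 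A rank count confirms the problem: your proposed decomposition $\ckfm_0 \times \prod_i \gl(2n_i)$ has rank $\lfloor(N_0+N_1)/2\rfloor + 2\sum_i n_i$, exceeding the rank $\lfloor N/2\rfloor = \lfloor(N_0+N_1)/2\rfloor + \sum_i n_i$ of $\ckfg_\Lambda$ by $\sum_i n_i$.

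This matters because your second step — splitting the Richardson orbit containment hypothesis of \Cref{cor:closure} factor-by-factor — only makes sense relative to an actual direct-product decomposition of $\ckfg_\Lambda$. Since $\gl(2n_i)$ is not a factor of $\ckfg_\Lambda$, the containment $\Ind_{\ckfg_\lambda}^{\ckfg_\Lambda}\bfzero \preceq \Ind_{\ckfg_\nu}^{\ckfg_\Lambda}\bfzero$ does not decompose through it. The correct version (and what the paper does) is to recognize the $\bfp_i$-block factor as $\gl(n_i)$ with the type-$A$ infinitesimal character $\rho_{t_i}(\bfp_i) = \xi_{\bfr_i}(\bfp_i) \in \R^{n_i}$ where $\bfr_i$ is the $m_i$-tuple described via \Cref{rem:rho_xi}, then apply \Cref{cor:type_A_rtuple} there. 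With this correction, the remainder of your argument — applying the $q$-unipotent argument simple-factor-by-simple-factor to the classical part and then summing the squared norm inequalities — recovers the paper's proof.
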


\begin{proof}
	When $\fg$ is of type $A$, this is a special case of \Cref{cor:type_A_rtuple}. So we only need to consider the cases when $\fg$ is of type $B$, $C$ or $D$. We only illustrate the case when $\fg$ is of type $B$ or $D$. The case of type $C$ is along the same lines.

	Suppose $\ckfg = \mathfrak{so}(N)$, where $N \geqslant 7$. Let $n = \lfloor \frac{N}{2} \rfloor$. Write $\bfq$ as in \eqref{eq:antisymm_q} and $\bfs$ as in \eqref{eq:antisymm_s}. 
	As in the proof of \Cref{thm:q-unipotent_weaklyunip}, set $\bfd_i$ to be the subpartition of $\bfd$ consisting of all members of $\bfd$ that are congruent to $i$, for $i = 0, 1$. Set $N_i = |\bfd_i|$, $i = 0, 1$. 
	%Then $N_0$ is even and $\# \bfd_1 \equiv N_1 \equiv N \bmod 2$. 
	Then after conjugation, we can assume that the pseudo-levi subalgebra $\ckfg_\Lambda$ associated to the integral root datum equals the standard pseudo-levi subalgebra 
		\[ \ckfg_\Lambda = \so(N_0) \oplus \so(N_1) \oplus \bigoplus_{i=1}^l \gl(n_i),\]
	where $n_i = |\bfp_i|$, $1 \leq i \leq l$. Then $\lambda = \rho_{\bfs}(\bfq)$ can be regarded as the concatenation of $\lambda_0 = \rho_{\bfzero}(\bfd_0)$, $\lambda_1 = \rho_{\bfzero}(\bfd_1)$ and $\rho_{t_i}(\bfp_i) = \xi_{\bfr_i}(\bfp_i)$, $1 \leq i \leq l$ (up to conjugation by the Weyl group $S_{N-1}$ of $\sl(N)$), where each $\bfr_i$ is an $m_i$-tuple obtained from $(t_1, \cdots, t_1)$ (repeated $m_i$ times) as in Remark \ref{rem:rho_xi}. 

	Now the Richardson orbit $\Ind_{\ckfg_\lambda}^{\ckfg_\Lambda} \bfzero$ is the product of corresponding Richardson orbits in each simple factor of $\ckfg_\Lambda$. More precisely, the Richardson orbit in $\so(N_0)$ (resp. $\so(N_1)$) is $f_{CD}(\check{\Orb}_{\bfd_0})$ (resp. $\check{\Orb}_{\bfd_1}$) as in the proof of \Cref{thm:q-unipotent_weaklyunip}, whereas the Richardson orbit in each $\gl(n_i)$-factor is the regular nilpotent orbit. 
	We can then apply the same argument as in the proof of \Cref{thm:q-unipotent_weaklyunip} to the two $\so$-factors, and apply \Cref{thm:type_A} to each $\gl(n_i)$-factor as in the proof of \Cref{cor:type_A_rtuple}.
\end{proof}

\begin{corollary} \label{cor:cover_weaklyunip_classical}
	Let $\g$ be a classical Lie algebra. Then the unipotent ideal $I_0(\widetilde{\Orb})$ attached to a connected cover $\widetilde{\Orb}$ of any nilpotent orbit in $\fg^*$ is mildly and hence weakly unipotent with respect to the root lattice of $\g$.
\end{corollary}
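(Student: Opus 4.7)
The plan is to reduce the statement to \Cref{thm:general_classical} by showing that the infinitesimal character $\lambda_0(\widetilde{\Orb})$ of the unipotent ideal $I_0(\widetilde{\Orb})$ defined in \cite{LMBM} is of the form $\lambda_{\check{\Orb}, \bfs} = \rho_\bfs(\bfq)$ for some antisymmetric pair $(\bfq, \bfs)$ in the sense of \Cref{defn:rho}. Here $\check{\Orb}$ will be a suitable nilpotent orbit in the Langlands (or metaplectic) dual Lie algebra, determined by $\Orb$, and $\bfs$ will record the shift attached to the choice of cover.

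First I would recall the LMBM recipe in the classical setting: given a nilpotent cover $\widetilde{\Orb} \to \Orb$ with $\Orb$ attached to a partition, one produces $\lambda_0(\widetilde{\Orb})$ by first taking (a version of) the Barbasch--Vogan infinitesimal character $\rho(\bfq)$ of the Langlands/metaplectic dual partition $\bfq$ attached to $\Orb$, and then adding a shift determined by a character of the component group/fundamental group of $\Orb$. Concretely, this shift takes the form \eqref{eq:antisymm_s}: the rows of $\bfq$ break into a ``fixed'' part $\bfd$ (those rows whose length appears with multiplicity forced by the type of $\ckfg$) that receives no shift, together with pairs of equal-length rows $(\bfp_i,\bfp_i)$ on which the shift is $(+t_i,-t_i)$. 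This is precisely the antisymmetry condition in \Cref{defn:rho}; in particular the constraint $|s_i|<\tfrac{1}{2}$ holds because $\lambda_0(\widetilde{\Orb})$ comes from a minimal representative (equivalently, the orbit cover is unchanged under integral translations of the shift).

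Second, I would verify \Cref{rem:antisymm} is consistent with each classical type: in types $B$, $C$, $D$ the parity conditions on the number of odd rows of $\bfq$ that are needed to land in the appropriate Cartan are exactly the parity conditions built into the Langlands/metaplectic dualities $f_{BC}$, $f_{CB}$, $f_{DC}$ from \Cref{subsec:special_partitions}. In type $A$ the antisymmetry condition is vacuous and one is reduced directly to \Cref{cor:type_A_rtuple}. Once this identification $\lambda_0(\widetilde{\Orb}) = \lambda_{\check{\Orb},\bfs}$ with $(\bfq,\bfs)$ antisymmetric is established, \Cref{thm:general_classical} immediately gives that $J_{max}(\lambda_0(\widetilde{\Orb}))$ is mildly unipotent with respect to $Q_\g$, and hence by \Cref{cor:mildly_implies_weakly} weakly unipotent. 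Since $I_0(\widetilde{\Orb})$ is a maximal primitive ideal with infinitesimal character $\lambda_0(\widetilde{\Orb})$ (as recalled in the introduction), this is what we need.

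The main obstacle is bookkeeping in the translation between the LMBM parameters for a cover and the pair $(\bfq,\bfs)$: one has to check that in every classical type (including the delicate $\Spin(N)$-equivariant but not $\SO(N)$-equivariant covers in type $D$ mentioned in the introduction) the shifts produced by the construction of \cite{LMBM} really do pair up antisymmetrically on rows of equal length, and that no shift of size $\geq \tfrac12$ is needed. For the $\Spin$-covers the potentially new feature is that the very even orbits may have covers whose shift involves half-integers; however, these are compatible with the root lattice of $\g$ (though not the weight lattice, as \Cref{ex:root_vs_weight} warns), and \Cref{thm:general_classical} is stated precisely with respect to $Q_\g$, so the conclusion still applies. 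Beyond this combinatorial matching, the rest of the argument is automatic from \Cref{thm:general_classical} and \Cref{cor:mildly_implies_weakly}.
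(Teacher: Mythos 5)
Your proposal takes essentially the same approach as the paper: reduce to \Cref{thm:general_classical} (and its type~$A$ ingredient \Cref{cor:type_A_rtuple}) by identifying $\gamma_0(\widetilde{\Orb})$ as $\rho_{\bfs}(\bfq)$ for an antisymmetric pair, with the identification supplied by the explicit LMBM/MBM descriptions of unipotent infinitesimal characters. The paper handles the linear $B/C/D$ covers slightly differently by citing \cite[Prop.~8.2.8(ii),(iii)]{LMBM} to get $q$-unipotence and invoking \Cref{thm:q-unipotent_weaklyunip} directly (i.e., the $\bfs=\bfzero$ subcase), and cites \cite[Prop.~7.6.4]{LMBM} and \cite[Prop.~4.2.6]{MBM} for the type~$A$ and $\Spin$-but-not-$\SO$ cases; your uniform framing is logically equivalent, though note the $\Spin$-cover shifts are $\pm\tfrac14$, not half-integers.
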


\begin{proof}
	When $\g$ is of type $B$, $C$ or $D$ and the cover $\widetilde{\Orb}$ is equivariant with respect to the linear classical group $G$ corresponding to $\g$, it follows from \cite[Proposition 8.2.8, (ii), (iii)]{LMBM} that the infinitesimal character $\gamma_0(\widetilde{\Orb})$ is $q$-unipotent.

	When $\g = \sl(N)$, or $\g=\so(N)$ and $\widetilde{\Orb}$ is equivariant with respect to $\Spin(N)$ but not $\SO(N)$, we only need to check that the infinitesimal character $\gamma_0(\widetilde{\Orb})$ is of the form $\rho_{\bfs}(\bfq)$ for some antisymmetric pair $(\bfq, \bfs)$ in the sense of \Cref{defn:rho}, then apply \Cref{thm:general_classical}.
	When $\g = \sl(N)$, this follows from \cite[Proposition 7.6.4]{LMBM}. When $\g = \so(N)$, this follows from \cite[Proposition 4.2.6]{MBM} and its proof.
\end{proof}

\section{The case of exceptional groups} \label{sec:exceptional}

Instead of checking that all unipotent ideals for simple exceptional Lie algebra are weakly unipotent, we describe a very simple algorithm to check the mild unipotence of the unipotent ideals $J_{max}(\lambda)$ attached to birationally rigid covers for exceptional simple Lie algebras, as studied in \cite[\S\, 4.3.1 \& 4.3.2]{MBM}. This is already enough for the applications in \cite{Davis-Mason-Brown:Hodge}, see Corollary 5.23 and Theorem 5.25 there.
We have implemented this algorithm in the \texttt{atlas} software. 
%The code is available upon request.

Let $\g$ be an exceptional simple Lie algebra. Let $\lambda \in \fh^*_\R$ be the infinitesimal character of the unipotent ideal $I(\widetilde{\Orb}) = J_{max}(\lambda)$ attached to a birationally rigid cover $\widetilde{\Orb}$ of a nilpotent orbit $\Orb$ in $\fg^*$. For any $\nu \in \Lambda$, let $n_\nu$ be the number of roots of the reductive subalgebra $\ckfg_\nu = \mathfrak{z}_{\ckfg}(\nu)$. 
We run through all $\nu \in \Lambda$ such that $\lVert \nu \rVert \leq \lVert \lambda \rVert$, and check whether $n_\nu > n_\lambda$. If this is the case, then 
\[
	\dim \left( \Ind_{\ckfg_\lambda}^{\ckfg_\Lambda} \bfzero \right)  > \dim \left( \Ind_{\ckfg_\nu}^{\ckfg_\Lambda} \bfzero \right)
\] 
and hence
\[
	\Ind_{\ckfg_\lambda}^{\ckfg_\Lambda} \bfzero  \not\preceq \Ind_{\ckfg_\nu}^{\ckfg_\Lambda} \bfzero.  
\]
By \Cref{cor:closure}, this implies that the unipotent ideal $I(\widetilde{\Orb})$ is mildly unipotent, with respect to the choice of $X^*$. We have checked this for both root lattice and weight lattice for all simple exceptional Lie algebras and all unipotent ideals attached to birationally rigid covers.

\bibliographystyle{alphaurl}
\bibliography{duality}

\end{document}